\definecolor{green}{rgb}{0.0,0.5,0.0} % Darken the default green
\def\ALG@special@indent{%
    \ifdim\ALG@thistlm=0pt\relax
        \hskip-\leftmargin
    \else
        \hskip\ALG@thistlm
    \fi
}
\newcommand{\StateO}[1]{\item[]\noindent\ALG@special@indent #1}
\algnewcommand{\LineComment}[1]{\Statex \hskip\ALG@thistlm #1}
\newcommand{\multiline}[1]{%
  \begin{tabularx}{\dimexpr\linewidth-\ALG@thistlm}[t]{@{}X@{}}
    #1
  \end{tabularx}
}
\theoremstyle{plain}
\newtheorem{thm}{\protect\theoremname}[section]
\newtheorem{prop}[thm]{\protect\propositionname}
\newtheorem{lem}[thm]{\protect\lemmaname}
\providecommand{\lemmaname}{Lemma}
\providecommand{\propositionname}{Proposition}
\providecommand{\theoremname}{Theorem}
\begin{document}
\title{Global Complexity Bound of a Proximal ADMM for Linearly-Constrained
Nonseparable Nonconvex Composite Programming\thanks{\textbf{Funding}: The first author has been supported by (i) the US Department of Energy
(DOE) and UT-Battelle, LLC, under contract DE-AC05-00OR22725, (ii)
the Exascale Computing Project (17-SC-20-SC), a collaborative effort
of the U.S. Department of Energy Office of Science and the National
Nuclear Security Administration, and (iii) the IDEaS-TRIAD Fellowship
(NSF Grant CCF-1740776). The second author was partially supported by ONR Grant N00014-18-1-2077 and AFOSR Grant FA9550-22-1-0088. \\
\textbf{Versions}: v0.1 (Oct. 24, 2021), v0.2 (Dec. 9, 2021), v1.0 (Jun. 14, 2022), v2.0 (Jan. 3, 2023)
}
}
\author{Weiwei Kong\thanks{Computer Science and Mathematics Division, Oak Ridge National Laboratory,
Oak Ridge, TN, 37830. \protect\protect\href{mailto:wwkong92@gmail.com}{wwkong92@gmail.com}}\hspace*{0.5em} \and Renato D.C. Monteiro\thanks{School of Industrial and Systems Engineering, Georgia Institute of
Technology, Atlanta, GA, 30332-0205. \protect\protect\href{mailto:monteiro@isye.gatech.edu}{monteiro@isye.gatech.edu}}}
\maketitle
\begin{abstract}
This paper proposes and analyzes a dampened proximal alternating direction
method of multipliers (DP.ADMM) for solving linearly-constrained nonconvex
optimization problems where the smooth part of the objective function
is nonseparable. Each iteration of DP.ADMM consists of: (ii) a sequence
of partial proximal augmented Lagrangian (AL) updates, (ii) an under-relaxed
Lagrange multiplier update, and (iii) a novel test to check whether
the penalty parameter of the AL function should be updated. Under
a basic Slater point condition and some requirements on the dampening
factor and under-relaxation parameter, it is shown that DP.ADMM obtains
an approximate first-order stationary point of the constrained problem in ${\cal O}(\varepsilon^{-3})$
iterations for a given numerical tolerance $\varepsilon>0$. One of
the main novelties of the paper is that convergence of the method
is obtained without requiring any rank assumptions on the constraint
matrices. 
\end{abstract}

\begin{keywords}
proximal ADMM, nonseparable, nonconvex composite optimization, iteration complexity, under-relaxed update, augmented Lagrangian function
\end{keywords}

\begin{AMS}
65K10, 90C25, 90C26, 90C30, 90C60
\end{AMS}

\global\long\def\tx{\tilde{x}}%
\global\long\def\rn{\mathbb{R}^{n}}%
\global\long\def\R{\mathbb{R}}%
\global\long\def\r{\mathbb{R}}%
\global\long\def\n{\mathbb{N}}%
\global\long\def\c{\mathbb{C}}%
\global\long\def\pt{\mathbb{\partial}}%
\global\long\def\lam{\lambda}%
\global\long\def\argmin{\operatorname*{argmin}}%
\global\long\def\Argmin{\operatorname*{Argmin}}%
\global\long\def\argmax{\operatorname*{argmax}}%
\global\long\def\dom{\operatorname*{dom}}%
\global\long\def\ri{\operatorname*{ri}}%
\global\long\def\inner#1#2{\langle#1,#2\rangle}%
\global\long\def\trc{\operatorname*{tr}}%
\global\long\def\cConv{\overline{{\rm Conv}}\ }%
\global\long\def\intr{\operatorname*{int}}%

\section{Introduction}
\label{sec:intro}

Consider the following composite optimization
problem:
\begin{equation}
\min_{x\in\rn}\left\{ \phi(x):=f(x)+h(x):Ax=d\right\}, \label{eq:main_prb}
\end{equation}
where $h$ is a closed convex function, $f$ is a (possibly) nonconvex differentiable
function on the domain of $h$, the gradient of $f$ is Lipschitz continuous, $A$ is a linear operator, $d\in \r^{\ell}$ is a vector in the image of $A$ (denoted as ${\rm{Im}}(A)$), and
the following $B$-block structure is assumed:
\begin{gather}
\begin{gathered}
n=n_{1}+\ldots+n_{B},\quad x=(x_{1},\ldots,x_{B})\in\r^{n_{1}}\times\cdots\times\r^{n_{B}}\\
h(x)=\sum_{t=1}^{B}h_{t}(x_{t}),\quad Ax=\sum_{t=1}^{B}A_{t}x_{t},
\end{gathered}\label{eq:block_structure}
\end{gather}
where $\{A_{t}\}_{t=1}^{ B}$ is another set of linear operators and $\{h_{t}\}_{t=1}^{ B}$
is another set of proper closed convex functions with compact domains. 

Due to the block structure in \eqref{eq:block_structure}, a popular algorithm
for obtaining stationary points of \eqref{eq:main_prb} is the proximal alternating direction
method of multipliers (ADMM) wherein a sequence of smaller augmented Lagrangian
type subproblems is solved over $x_1,...,x_B$ sequentially or in parallel. 
However, the main drawbacks of existing ADMM-type methods include: (i) strong
assumptions about the structure of $h$; (ii) iteration complexity bounds that scale
poorly with the numerical tolerance; (iii) small stepsize parameters; 
or (iv) a strong rank assumption about the last block $A_B$ that
implies ${\rm Im}(A_B)\supseteq \{d\} \cup {\rm Im}(A_1) \cup \ldots {\rm Im}(A_{B-1})$ which we refer to
as the {\it last block condition}.

Of the above drawbacks,  (iv) is especially limiting. To illustrate this, we give a few applications where the last block condition,
and hence (iv), does not hold:

\begin{itemize}
    \item \textit{Rank-deficient Quadratic Programming (RDQP).} It is shown in \cite{chen2016direct} that the (non-proximal) ADMM diverges on the following three-block convex RDQP:
    \begin{align*}
    \min_{x_{1},x_{2},x_{3},x_{4}}\  & \frac{1}{2}x_{1}^{2}\\
    \text{s.t.}\  & \left(\begin{array}{cc}
    1 & 1\\
    1 & 1\\
    1 & 1
    \end{array}\right)\left(\begin{array}{c}
    x_{1}\\
    x_{2}
    \end{array}\right)+\left(\begin{array}{c}
    1\\
    1\\
    2
    \end{array}\right)x_{3}+\left(\begin{array}{c}
    1\\
    2\\
    2
    \end{array}\right)x_{4}=0.
    \end{align*}
    \item \textit{Distributed Finite-Sum Optimization (DFSO).} Given a positive integer $B$, 
    consider:
    \begin{equation}
    \min_{x_i \in\rn} \left\{\sum_{t=1}^B (f_t + h_t)(x_t) : x_t-x_B=0,\quad t=1,\ldots,B-1\right\} 
    \label{eq:intro_example}
    \end{equation}
    where $f_i$ is continuously differentiable, $h_t$ is closed convex, and $\nabla f_t$ is Lipschitz continuous for $t=1,...,B$. 
    It is easy to see\footnote{Here, $e_1,\ldots,e_n$ is the standard basis for $\r^{B-1}$, $I_n$ is the $n$-by-$n$ identity matrix, $\textbf{1}\in\r^{B-1}$ is a vector of ones, and $\otimes$ is the Kronecker product of two matrices.} that \eqref{eq:intro_example} is a special case of \eqref{eq:main_prb} where we have $A_s=e_s \otimes I \in \r^{n(B-1)\times n}$ for $s=1,\ldots,B-1$, we have $A_B= -\textbf{1} \otimes I \in \r^{n(B-1)\times n}$, and $d=0$. Moreover, it is straightforward to show that for $s=1,\ldots,B-1$ we have ${\rm Im}(A_s) \cap {\rm Im}(A_B) = 0$ but ${\rm Im}(A_s) \backslash \{0\} \neq \emptyset$, which implies that ${\rm Im}(A_s) \not \subseteq {\rm Im}(A_B)$.

    \item \textit{Decentralized AC Optimal Power Control (DAC-OPF).} 
    The convex version was first considered in \cite{sun2013fully} for the rectangular coordinate formulation, and the problem itself is considered one of the most important ones in power systems decision-making. 
    The nonconvex version of DAC-OPF is a variant  where $h_t$ is the indicator of a convex region given by a finite number of complicated quadratic constraints and $f_t$ is a nonconvex quadratic cost function. 
    A discussion of the limitations induced by assuming any rank condition which implies the last
    block condition is given in \cite{sun2019two}.
\end{itemize}

Our goal in this paper is to develop and analyze the complexity of a proximal ADMM that removes all the drawbacks above. 
For a given $\theta\in(0,1)$, its $k^{{\rm th}}$ iteration
 is based on the \textit{dampened} augmented Lagrangian (AL) function given by
\begin{align}
{\cal L}_{c_k}^{\theta}(x;p) & :=\phi(x)+(1-\theta)\left\langle p,Ax-d\right\rangle +\frac{c_k}{2}\left\Vert Ax-d\right\Vert ^{2},\label{eq:dal}
\end{align}
where $c_k>0$ is the \textit{penalty parameter}. Specifically, it consists of the following updates: given $x^{k-1}=(x^{k-1}_1,\ldots,x^{k-1}_B)$, $p^{k-1}$
$c_k$, $\chi$, and $\lam$, sequentially ($t=1,\ldots,B$) compute the
$t^{\rm th}$ block of $x^k$ as
\begin{align}
x_{t}^{k} & =\argmin_{u_t\in\r^{n_{t}}}\left\{ \lam{\cal L}_{c_k}^{\theta}(\ldots,x_{t-1}^{k},u_t,x_{t+1}^{k-1},\ldots;p^{k-1})+\frac{1}{2}\|u_t-x_{t}^{k-1}\|^{2}\right\} ,\label{eq:x_update}
\end{align}
and then update
\begin{equation}
    p^{k} = (1-\theta)p^{k-1}+\chi c_k\left(Ax^k-d\right),\label{eq:p_update}
\end{equation}
where $\chi\in(0,1)$ is a suitably
chosen under-relaxation parameter. 

\medskip{}

\noindent \emph{Contributions}. 
For proper choices of the stepsize $\lam$ and a non-decreasing sequence of penalty parameters $\{c_k\}_{k\geq 1}$, 
it is shown that if the
Slater-like condition\footnote{Here, $\intr S$ denotes the interior of a set $S$, $\dom\psi$ denotes the domain of a function $\psi$, and $A^{*}$ is the adjoint of linear operator $A$.}
\begin{equation}
\exists z_{\dagger} \in\intr\left(\dom h\right)\text{ such that }A z_{\dagger}=d,\label{eq:slater}
\end{equation}
holds, then DP.ADMM has the following features:
\begin{itemize}
    \item for any tolerance pair $(\rho,\eta)\in\r_{++}^{2}$, it obtains a pair $(\bar{z},\bar{q})$ satisfying  
\begin{equation}
{\rm dist}\left(0,\nabla f(\bar{z})+ A^*\bar{q} + \pt h(\bar{z})\right)\leq\rho,\quad\left\Vert A\bar{z}-d\right\Vert \leq\eta
\label{eq:approx_statn_point}
\end{equation}
in ${\cal O}(\max\{\rho^{-3}, \eta^{-3}\})$ iterations;
    \item it introduces a novel approach for updating the penalty parameter $c_k$, instead of assuming that $c_k=c_1$ for every $k\geq 1$ and that $c_1$ is sufficiently large (such as in \cite{chao2020inertial,jia2021incremental, sun2021dual, jiang2019structured, wang2019global, zhang2020proximal}); 
    \item it does not have any of the drawbacks mentioned in the sentences preceding equation \eqref{eq:intro_example}.
\end{itemize}

\medskip{}

\noindent \emph{Related Works.} Since ADMM-type methods where $f$
is convex have been well-studied in the literature (see, for example,
\cite{bertsekas2016nonlinear,boyd2011distributed,eckstein1992douglas,eckstein1998operator,ruszczynski1989augmented,eckstein1994some,rockafellar1976augmented,gabay1983chapter,glowinski1975approximation,gabay1976dual,eckstein2008family,eckstein2009general,monteiro2013iteration}),
we make no further mention of them here. Instead, we discuss below
ADMM-type
methods where $f$ is nonconvex.

Letting $\delta_{S}$ denote the indicator function of a convex
set $S$ (see Subsection~\ref{subsec:notation}), we first present a list of common assumptions in Table~\ref{tab:assumptions}. 

\begin{table}[!htb]
\begin{centering}
\begin{tabular}{cl}
\toprule 
{\scriptsize{}${\cal Q}$}  & {\scriptsize{}$f(z)=\sum_{t=1}^B f_t(z_t)$ for subfunctions $f_t:{\dom h}_t \mapsto \r$.}
\tabularnewline
{\scriptsize{}${\cal R}_{0}$}  & {\scriptsize{}${\rm Im}(A_ B)\supseteq\{d\}\cup{\rm Im}(A_1)\cup \ldots \cup {\rm Im}(A_{ B-1})$.}
\tabularnewline
{\scriptsize{}${\cal S}$}  & {\scriptsize{}The Slater-like assumption \eqref{eq:slater} holds.}\tabularnewline
% {\scriptsize{}${\cal KL}$}  & {\scriptsize{}The classic AL function, i.e. \eqref{eq:dal} with
% $\theta=0$, has the KL property.}\scriptsize{}\tablefootnote{See \cite{jia2021incremental,chao2020inertial} for a definition.}\tabularnewline
{\scriptsize{}${\cal P}$}  & {\scriptsize{}$h_{i}\equiv\delta_{P}$ for $i\in\{1,\ldots, B\}$,
where $P$ is a polyhedral set.}\tabularnewline
{\scriptsize{}${\cal F}$}  & {\scriptsize{}A point $x^0 \in \dom h$
satisfying $Ax^0=d$ is available as an input.}\tabularnewline
\bottomrule
\end{tabular}
\par\end{centering}
\caption{Common nonconvex ADMM assumptions and regularity conditions.
\label{tab:assumptions}}
\end{table}

Earlier developments on
ADMM for solving nonconvex instances of \eqref{eq:main_prb} all assume
that ${\cal R}_0$ hold,
and the ones dealing with
 complexity
establish 
an ${\cal O}(\varepsilon^{-2})$ iteration complexity,
where $\varepsilon := \min\{\rho,\eta\}$.
More specifically,
\cite{goncalves2017convergence, chao2020inertial, themelis2020douglas, wang2019global} present proximal ADMMs under the assumption $B=2$, $h_B \equiv 0$, and  assumption ${\cal Q}$ holds for  \cite{goncalves2017convergence, chao2020inertial, themelis2020douglas}.
Papers \cite{melo2017iterationGauss,melo2017iterationJacobi, jia2021incremental, jiang2019structured} present (possibly linearized) ADMMs under the assumption that $B\geq 2$, $h_B \equiv 0$, and assumption ${\cal Q}$ holds for  \cite{melo2017iterationGauss,melo2017iterationJacobi, jia2021incremental}.  

We next discuss papers that do not assume the restrictive condition ${\cal R}_0$ in Table~\ref{tab:assumptions},
and are based on
ADMM approaches directly applicable to \eqref{eq:main_prb} or some reformulation of it.
An early paper in this direction is
\cite{jiang2019structured}, which establishes
an ${\cal O}(\varepsilon^{-6})$ iteration-complexity bound for an ADMM-type method applied to a  penalty reformulation of \eqref{eq:main_prb} that artificially satisfies ${\cal R}_0$.
On the other hand, development of
ADMM-type methods directly applicable to
\eqref{eq:main_prb} is considerably more challenging
and only a few works have recently surfaced (see Table~\ref{tab:comparisons} below). 

\begin{table}[!htb]
\begin{centering}
\begin{tabular}{ccccccc}
\toprule 
{\scriptsize{}Algorithm } & {\scriptsize{}$\theta$} & {\scriptsize{}$\chi$} & {\scriptsize{}Complexity } & {\scriptsize{}Assumptions} & {\scriptsize{}Adaptive $c$}\tabularnewline
\midrule 
{\scriptsize{}LPADMM \cite{zhang2020proximal} } & {\scriptsize{}$0$} & {\scriptsize{}$(0,\infty)$} & {\scriptsize{}None }  & {\scriptsize{}${\cal P}$,\,${\cal S}$} & {\scriptsize{} No}
\tabularnewline
{\scriptsize{}SDD-ADMM \cite{sun2021dual} } & {\scriptsize{}$(0,1]$} & {\scriptsize{}$[-\frac{\theta}{4},0)$} & {\scriptsize{}${\cal O}(\varepsilon^{-4})$ } & {\scriptsize{}${\cal F}$} & {\scriptsize{} No}
\tabularnewline
\textbf{\scriptsize{}DP.ADMM}{\scriptsize{} }  & {\scriptsize{}$(0,1]$} & {\scriptsize{}$(0,\pi_\theta]$} & {\scriptsize{}${\cal O}(\varepsilon^{-3})$ }  & {\scriptsize{}${\cal S}$} & {\scriptsize{} Yes}
\tabularnewline 
\bottomrule
\end{tabular}
\par\end{centering}
\caption{Comparison of existing ADMM-type methods with DP.ADMM for finding $\varepsilon$-stationary points with $\varepsilon:=\min\{\rho,\eta\}$ and $\pi_\theta = \theta^{2} / [2B(2-\theta)(1-\theta)]$ if $\theta\in(0,1)$ and $\pi_\theta = 1$ if $\theta=1$.  \label{tab:comparisons}}
\end{table}

We now discuss some advantages of DP.ADMM compared to the other two papers in Table~\ref{tab:comparisons}. First, the method in
\cite{sun2021dual} considers a small stepsize (proportional to $\eta^{2}$)
\textit{linearized} proximal gradient update while DP.ADMM considers a large
stepsize (proportional to the inverse of the weak-convexity constant
of $f$) proximal point update as in \eqref{eq:x_update}. 
Second, the method in \cite{sun2021dual} requires a feasible initial point, i.e., a point $z_0 \in \dom h$ satisfying $Az_0 = d$, while DP.ADMM only requires that the initial point be in $\dom h$. 
Third, the methods in \cite{sun2021dual, zhang2020proximal} both require certain hyperparameters (the penalty parameter in \cite{sun2021dual} and an interpolation parameter in \cite{zhang2020proximal}) to be chosen in a range that is hard to compute, while DP.ADMM only requires its main hyperparameter pair $(\chi,\theta)$ to satisfy a simple inequality (see \eqref{eq:chi_theta_cond}). 
Moreover, \cite{sun2021dual} does not specify an easily implementable rule for updating its method's penalty parameter, while DP.ADMM does. 
Fourth, convergence of the method in \cite{zhang2020proximal} requires $h$ being the indicator of a polyhedral set, whereas DP.ADMM applies to any closed convex function $h$. 
Fifth, in contrast to \cite{sun2021dual} and this work, \cite{zhang2020proximal} does not give a complexity bound for its proposed method. Finally, \cite{sun2021dual} considers an unusual negative stepsize for its Lagrange multiplier update  --- which justifies its moniker ``scaled dual descent ADMM'' --- whereas DP.ADMM considers a positive stepsize.

\medskip{}

\noindent \emph{Organization}. Subsection~\ref{subsec:notation}
presents some basic definitions and notation. Section~\ref{sec:admm}
presents the proposed DP.ADMM in two subsections. The first one precisely
describes the problem of interest, while the second one states the static and dynamic 
DP.ADMM variants and their iteration complexities. Section~\ref{sec:static_analysis} and \ref{sec:dynamic_analysis}
present the main properties of the static and dynamic DP.ADMM, respectively. Section~\ref{sec:numerical_experiments} presents some preliminary numerical experiments.
Section~\ref{sec:concluding_remarks} gives some concluding remarks.
Finally, the end of the paper contains several appendices.

\subsection{Notation and Basic Definitions}

\label{subsec:notation}

Let $\r_{+}$ denote the set of nonnegative real numbers, and let $\r_{++}$ denote the set of positive real numbers. Let $\r_{n}$ denote the $n$-dimensional
Hilbert space with inner product and associated norm denoted by $\inner{\cdot}{\cdot}$
and $\|\cdot\|$, respectively. The direct sum (or Cartesian product)
of a set of sets $\{S_{i}\}_{i=1}^{n}$ is denoted by $\prod_{i=1}^{n}S_{i}$.

The smallest positive singular value of a nonzero linear operator
$Q:\r^{n}\to\r^{l}$ is denoted by $\sigma_{Q}^{+}$. For a given
closed convex set $X\subset\r^{n}$, its boundary is denoted by $\partial X$
and the distance of a point $x\in\r^{n}$ to $X$ is denoted by ${\rm dist}_{X}(x)$.
The indicator function of $X$ at a point $x\in\rn$ is denoted by
$\delta_{X}(x)$ which has value $0$ if $x\in X$ and $+\infty$
otherwise. For every $z > 0$ and positive integer $b$, we denote $\log_b^+(z) := \max\{1, \lceil \log_b(z)\rceil\}$.

The domain of a function $h:\r^{n}\to(-\infty,\infty]$ is the set
$\dom h:=\{x\in\r^{n}:h(x)<+\infty\}$. Moreover, $h$ is said to
be proper if $\dom h\ne\emptyset$. The set of all lower semi-continuous
proper convex functions defined in $\r^{n}$ is denoted by $\cConv\rn$.
The set of functions in $\cConv\rn$ which have domain $Z\subseteq\rn$
is denoted by $\cConv Z$. The $\varepsilon$-subdifferential of a
proper function $h:\r^{n}\to(-\infty,\infty]$ is defined by 
\begin{equation}
\partial_{\varepsilon}h(z):=\{u\in\r^{n}:h(z')\geq h(z)+\inner u{z'-z}-\varepsilon,\quad\forall z'\in\r^{n}\}\label{def:epsSubdiff}
\end{equation}
for every $z\in\r^{n}$. The classic subdifferential, denoted by
$\partial h(\cdot)$, corresponds to $\partial_{0}h(\cdot)$. The
normal cone of a closed convex set $C$ at $z\in C$, denoted by $N_{C}(z)$,
is defined as 
\[
N_{C}(z):=\{\xi\in\r^{n}:\inner{\xi}{u-z}\leq\varepsilon,\quad\forall u\in C\}.
\]
If $\psi$ is a real-valued function which is differentiable at $\bar{z}\in\r^{n}$,
then its affine approximation $\ell_{\psi}(\cdot,\bar{z})$ at $\bar{z}$
is given by 
\begin{equation}
\ell_{\psi}(z;\bar{z}):=\psi(\bar{z})+\inner{\nabla\psi(\bar{z})}{z-\bar{z}}\quad\forall z\in\r^{n}.\label{eq:defell}
\end{equation}
If $z=(x,y)$ then $f(x,y)$ is equivalent to
$f(z)=f((x,y))$.

Iterates of a scalar quantity have their iteration number appear as a subscript, e.g., $c_\ell$, while non-scalar quantities have this number appear as a superscript, e.g., $v^k$, and $\hat{p}^\ell$. For variables with multiple blocks, the block number appears as a subscript, e.g., $x_t^{k}$ and $v_{t}^{k}$. Finally, we define the following norm for any quantity $u=(u_1,\ldots,u_B)$ following a block structure as in \eqref{eq:block_structure}:
\begin{equation}
    \|u\|_\dagger = \|(u_1,\ldots,u_B)\|_\dagger := \sum_{t=1}^B \|u_t\|. 
    \label{eq:block_norm}
\end{equation}

\section{Alternating Direction Method of Multipliers}

\label{sec:admm}

This section contains two subsections. The first one precisely describes
the problem of interest and its underlying assumptions, while the second one presents the DP.ADMM and its corresponding iteration complexity.

\subsection{Problem of Interest}

\label{subsec:prb}

This subsection presents the problem of interest and the assumptions
underlying it.

Denote the aggregated quantities 
\begin{gather}
\begin{gathered}
x_{<t}:=(x_{1},\ldots,x_{t-1}), \quad  x_{>t}:=(x_{t+1},\ldots,x_{ B}), \\  x_{\leq t}:=(x_{<t},x_{t}),\quad x_{\geq t}:=(x_{t},x_{>t}),
\end{gathered}
\label{eq:intro_agg}
\end{gather}
for every $x=(x_{1},\ldots,x_{ B})\in {\cal H}$. 
Our problem of interest is finding approximate stationary points of
\eqref{eq:main_prb} under the following assumptions:
% on $(\phi,h_{1},\ldots,h_{ B})$
% and $(A,d)$: 
\begin{itemize}
\item[(A1)] for every $t = 1,\ldots, B$, we have $h_{t}\in\cConv \r^{n_t}$ and
${\cal H}_t := \dom h_t$ is compact; 
\item[(A2)] $A \not\equiv0$ and ${\cal F}:=\{x\in {\cal H}:Ax=d\}\neq\emptyset$
where ${\cal{H}} := {\cal{H}}_{1} \times \cdots \times {\cal{H}}_{B} $; 
\item[(A3)] $h$ in \eqref{eq:block_structure} is $K_{h}$-Lipschitz continuous on ${\cal H}$ for some $K_{h}\geq0$; 
\item[(A4)] for every $t=1,\ldots,B$,
there exists
$m_{t} \geq 0$
such that
\begin{gather} \label{eq:weak_cvx}
f(x_{<t},\cdot,x_{>t}) + \delta_{{\cal H}_t}(\cdot) + \frac{m_{t}}{2}\|\cdot\|^{2}\text{ is convex for all } x\in{\cal H};
\end{gather}
\item[(A5)] $f$ is differentiable on ${\cal H}$ and, for every $t=1,\ldots,B-1$, there exists $M_{t}\ge 0$ such that
\begin{gather}
\begin{aligned}
& \|\nabla_{x_{t}}f(x_{\leq t},\tilde{x}_{>t})-\nabla_{x_{t}}f(x_{\leq t},{x}_{>t})\|
\leq M_t\|\tilde{x}_{>t}-x_{>t}\| \quad  \forall x,\tilde{x}\in{\cal H};
\end{aligned}
\label{eq:lipschitz_x}
\end{gather}
\item[(A6)] there exists $z_{\dagger} \in{\cal F}$ such that $d_{\dagger}:={\rm dist}_{\pt {\cal H}}(z_{\dagger})>0$. 
\end{itemize}

We now give a few remarks about the above assumptions. 
First, in view of the fact that ${\cal H}$ is compact, the following scalars are bounded:
\begin{equation}
\begin{gathered}D_{\dagger} :=\sup_{z \in {\cal H}}\|z - z_{\dagger}\|,\quad G_{f}:=\sup_{x\in {\cal H}}\|\nabla f(x)\|, \\ \underline{\phi} :=\inf_{x\in {\cal H}} \phi(x), \quad \overline{\phi}:=\sup_{x\in {\cal H}}\phi(x).\end{gathered}
\label{eq:agg_defs}
\end{equation}
Second, if $f$ is a separable function, i.e., it is of the form $f(z)=f_1(z_1) + \cdots + f_B(z_B)$, then each $M_t$ can be chosen to be zero.
Third, any function $h$ given by \eqref{eq:block_structure} such that each $h_t$ for $t=1,\ldots,B$ has the form $h_t =\tilde h_t + \delta_{Z_t}$,
where $\tilde h_t$ is a finite everywhere Lipschitz continuous
convex function
and $Z_t$ is a compact convex set, clearly satisfies condition (A3) for some $K_h$.

For a given tolerance pair
$(\rho,\eta)$, 
we define a
$(\rho,\eta)$-stationary pair
of \eqref{eq:main_prb} as
being a pair
$(\bar z,\bar q) \in {\cal H} \times \r^{\ell}$ satisfying
\eqref{eq:approx_statn_point}.
It is well known
that the first-order necessary condition for a point $z \in {\cal H}$
to be a local minimum of \eqref{eq:main_prb} is that there exists $q\in \r^\ell$ such that
the stationary conditions
\begin{gather*}
0\in\nabla f(z)+A^{*}q+\pt h(z),\quad Az=d
\end{gather*}
hold.
Hence, the requirements in \eqref{eq:approx_statn_point} can be viewed
as a direct relaxation of the above stationary conditions.
For ease of future
reference, we consider the following problem.
\vspace*{1em}
\noindent \begin{center}
{%
{\fboxrule 1.2pt\fboxsep 10pt\fbox{\begin{minipage}[t]{0.85\textwidth}%
\noindent \textbf{Problem~${\cal S}_{\rho,\eta}$}\ :\ Find a $(\rho,\eta)$-stationary pair $(\bar z, \bar q)$ satisfying \eqref{eq:approx_statn_point}.%
\end{minipage}}}} 
\par\end{center}
\vspace*{1em}

We now make three remarks about
Problem~${\cal S}_{\rho,\eta}$.
First, 
$(\bar z, \bar q)$ is a solution of Problem~${\cal S}_{\rho,\eta}$ if and only if there exists a residual $\bar v \in \rn$ such that 
\begin{equation}
\bar{v} \in \nabla f(\bar z) + A^* \bar{q} + \pt h(\bar z), 
\quad \|\bar v\| \leq \rho, \quad \|A\bar z-d\| \leq \eta.
\label{eq:approx_soln}
\end{equation}
Second, condition \eqref{eq:approx_soln} has been considered in many previous works (e.g., see \cite{kong2021aidal,kong2021thesis,kong2019complexity,kong2020efficient,melo2020iteration}).
Third, in the case where $\|\cdot\|=\|\cdot\|_2$ and $\rho=\eta$, the stationarity condition in \eqref{eq:approx_statn_point} implies the stationarity condition of the papers \cite{jiang2019structured, sun2021dual} in Table~\ref{tab:comparisons}. Specifically, \cite[Definition~3.6]{jiang2019structured} and \cite[Definition~3.3]{sun2021dual} consider a pair $(z,q)\in{\cal H}\times \r^{\ell}$ to be an $\varepsilon$-stationary pair if it satisfies
\[
{\rm dist}(0, \nabla_{z_t} f(z_1,\ldots,z_B) + A_t^* q + \pt h_t(z_t)) \leq \varepsilon, \quad \|Az-d\|\leq \varepsilon,
\]
for every $t=1,\ldots,B$.

In the following subsection, we present a method (Algorithm~\ref{alg:static_dp_admm}) that computes a triple $(\bar{z}, \bar{q}, \bar{v})$ satisfying \eqref{eq:approx_soln}, and hence
which guarantees that
$(\bar z, \bar q)$ is a solution of Problem~${\cal S}_{\rho,\eta}$.

\subsection{DP.ADMM}

\label{subsec:dp_admm}

We present DP.ADMM in two parts. The first part presents a static version of DP.ADMM which either (i) stops with a solution of Problem~${\cal S}_{\rho,\eta}$ or (ii)  signals that its penalty parameter is too small. 
The second part presents the (dynamic) DP.ADMM that repeatedly invokes the static version on an increasing sequence of penalty parameters. 

Both versions of DP.ADMM make use of the following condition on $(\chi,\theta)$:
\begin{equation}
2 \chi B (2-\theta)(1-\theta) \le \theta^{2}, \quad (\chi,\theta)\in(0,1]^2.\label{eq:chi_theta_cond}
\end{equation}
For ease of reference and discussion, the pseudocode for the static DP.ADMM is given in Algorithm~\ref{alg:static_dp_admm} below. 
Notice that
the classic proximal ADMM iteration 
\begin{align*}
x_{t}^{k} & =\argmin_{u^{t}\in\r^{n_{t}}}\left\{ \lam{\cal L}_{c}^{0}(x_{<t}^{k},u_{t},x_{>t}^{k-1};p^{k-1})+\frac{1}{2}\|u_{t}-x_{t}^{k-1}\|^{2}\right\},\quad  t=1,\ldots, B,\\
p^{k} & = p^{k-1}+c\left(Ax^{k}-d\right),
\end{align*}
corresponds to the case of $(\chi, \theta)=(1,0)$, where $c \ge 1$ is a fixed penalty parameter.

\begin{algorithm}[!htb]
\caption{Static DP.ADMM}
\label{alg:static_dp_admm}
\begin{algorithmic}[1]
\StateO \texttt{Input}: {${x}^{0}\in {\cal H}$,  $p^{0}\in A(\rn)$, $\lam\in(0,1/(2m)]$}, $c>0$;
\StateO \texttt{Require}: $m$ as in \eqref{eq:pp_indep_defs}, $(\rho,\eta) \in \r^2_{++}$, $(\chi,\theta)$ as in \eqref{eq:chi_theta_cond}
\vspace*{0.5em}
% \State $\lam \gets 1 / (2 m)$ 
\For{$k\gets1,2,\ldots$}
\LineComment \textcolor{green}{\hspace*{1.5em}\texttt{STEP 1} (prox update)}:
\For{$t\gets1,2,\ldots, B$}
\State $x_{t}^{k}\gets\argmin_{u_t\in\r^{n_{t}}}\left\{ \lam{\cal L}_{c}^{\theta}(x_{<t}^{k},u_t,x_{>t}^{k-1};p^{k-1})+\frac{1}{2}\|u_t-x_{t}^{k-1}\|^{2}\right\}$
\EndFor % Step 1

\State ${q}^{k}\gets(1-\theta)p^{k-1}+c(Ax^{k}-d)$
\LineComment \textcolor{green}{\texttt{STEP 2a} (successful termination check)}:
\For{$t\gets1,2,\ldots, B$}
\State $\delta_t^k \gets \nabla_{x_{t}}f(x_{\leq t}^{k}, x_{>t}^{k})-\nabla_{x_{t}}f(x_{\leq t}^{k},x_{>t}^{k-1})$
\State $v_{t}^{k}\gets \delta_t^k + cA_{t}^{*}\sum_{s=t+1}^{ B}A_{s}(x_{s}^{k}-x_{s}^{k-1})-\frac{1}{\lam}(x_{t}^{k}- x_{t}^{k-1})$ \;

\EndFor % Compute v^{k}

\If{$\|v^{k}\|\leq\rho$ \textbf{ and }$\|Ax^{k}-d\|\leq\eta$}
\State \Return{$(x^{k},p^{k},{q}^{k},v^{k})$}\label{ln:terminate} 
\EndIf % Step 2a
\LineComment \textcolor{green}{\texttt{STEP 2b} (unsuccessful termination check)}: \;
\If{$k \equiv 0 \bmod 2$ and $k\geq 3$}
\State ${\cal S}_k^{(v)} \gets \frac{2}{k+2}\sum_{i=k/2}^k\|v^{i}\|$ \;
\State ${\cal S}_k^{(f)} \gets \frac{2}{k+2}\sum_{i=k/2}^k\|Ax^{i}-d\|$ \;

\If{$\frac{1}{\rho} \cdot {\cal S}_{k}^{(v)} + \frac{1}{\eta}\sqrt{\frac{c^3}{ k}} \cdot {\cal S}_{k}^{(f)} \leq 1$}

\State \Return{$(x^{k},p^{k},{q}^{k},v^{k})$}\label{ln:end_cycle}
\EndIf %  Step 2b
\EndIf % Even and lower bound conditions on k
\LineComment \textcolor{green}{\texttt{STEP 3} (multiplier update)}:
\State $p^{k}\gets(1-\theta)p^{k-1}+\chi c(Ax^{k}-d)$\;

\EndFor % k

\end{algorithmic}
\end{algorithm}

The next result describes the iteration complexity and
some useful technical properties of Algorithm~\ref{alg:static_dp_admm}. Its proof is given in Section~\ref{sec:prop_prf}, and it uses three sets of scalars. The first set is independent of $(c,p^0)$ and is given by
\begin{gather}
\begin{gathered}
M:=\max_{1\leq t\le B}M_{t}, \quad m :=\max_{1\leq t\leq B} m_{t}, 
\quad \Delta_\phi := \overline{\phi}-  \underline{\phi}, \quad  \kappa_{0} := \frac{2B^2\left({\lam} M + 1 \right)}{\sqrt{\lam}}, \\
\kappa_{1} := \frac{\chi\|A\|D_{\dagger}}{\theta}, \quad
\kappa_{2} := \frac{1}{\theta}\left[1 + \frac{2\chi D_{\dagger} (K_{h}+G_{f})}{\theta d_{\dagger}\sigma_{A}^{+}}\right] + 1, \\
\kappa_3 := \frac{108\kappa_2^2}{\chi^2}, \quad \kappa_4 := \frac{\theta d_{\dagger}\sigma_{A}^{+}}{\chi D_{\dagger}}, \quad \kappa_5 := 8(B-1)\|A\|^2_\dagger, \quad \kappa_6 :=  3 + \frac{8 \kappa_0^2 \Delta_{\phi}}{\kappa_4^2}.
\end{gathered}
\label{eq:pp_indep_defs}
\end{gather}
where $(G_f, D_{\dagger}, \overline{\phi}, \underline{\phi})$, $K_h$, and $(m_t,M_t)$ are as in \eqref{eq:agg_defs}, (A3), and (A4). The second set is dependent on a given lower bound $\underline{c}$ on $c$ and is given by
\begin{align}
\begin{gathered}
\tilde{\kappa}_{\underline{c}}^{(0)} := 2\left(\sqrt{\Delta_{\phi}}+\frac{5\kappa_2}{\chi\sqrt{\underline{c}}}\right), \quad
\tilde{\kappa}_{\underline{c}}^{(1)}  := 3 \kappa_5 [\tilde{\kappa}_{\underline{c}}^{(0)}]^2, \quad
\tilde{\kappa}_{\underline{c}}^{(2)} := 3\kappa_0^2 [\tilde{\kappa}_{\underline{c}}^{(0)}]^2.
\end{gathered}
\label{eq:tilde_kappa_defs}
\end{align}
The third set is dependent on a given upper bound $\cal R$ on $\|p^0\|/c$ and is given by
\begin{align}
\begin{aligned}
\xi_{{\cal R}}^{(0)} &:= \frac{8}{\kappa_4^2} \left[\frac{9\kappa_0^2({\cal R} +\kappa_1)^2}{\chi^2} + \kappa_5 \Delta_\phi \right]  + (1-\theta)({\cal R}+\kappa_1), \\
\xi_{{\cal R}}^{(1)} &:= \frac{72\kappa_5({\cal R} + \kappa_1)^2}{\chi^{2}\kappa_4^2}.
\end{aligned}
\label{eq:xi_defs}
\end{align}
\begin{prop}
\label{prop:cycle_props} 
Let ${\cal R} \ge 0$ and $\underline{c} >0$ be given, and assume
that the pair $(c,p^0)$ given to
Algorithm~\ref{alg:static_dp_admm}
satisfies 
\begin{equation}
\|p_0\| \le c {\cal R}, \quad c \ge \underline{c}. \label{eq:R_cbar_bd}
\end{equation}
Then, the following statements hold about the call to Algorithm~\ref{alg:static_dp_admm}:
\begin{itemize}
\item[(a)] it terminates in a number of iterations bounded by
\begin{align}
{\cal T}_c (\rho,\eta\,|\,\underline{c}, {\cal R}) & := 48\left( \left\{\kappa_6 + \frac{\tilde{\kappa}_{\underline{c}}^{(1)}}{\rho^2}\right\} +  
\left\{\xi_{{\cal R}}^{(0)} + \frac{\kappa_3}{\eta^2} +  \frac{\tilde{\kappa}_{\underline{c}}^{(2)}}{\rho^2} \right\} c + 
\xi_{{\cal R}}^{(1)} c^2  \right),
\label{eq:T_static_def}
\end{align}
where $(\kappa_3,\kappa_6)$, $(\tilde{\kappa}_{\underline{c}}^{(1)}, \tilde{\kappa}_{\underline{c}}^{(2)})$, and $(\xi_{\cal R}^{(0)}, \xi_{\cal R}^{(1)})$ are
as in \eqref{eq:pp_indep_defs}, \eqref{eq:tilde_kappa_defs}, and \eqref{eq:xi_defs}, respectively;

\item[(b)] if it terminates successfully in Step~2a, then the first and third components of its output quadruple $(\bar{z}, \bar{p}, \bar{q}, \bar{v})$ solve
Problem~${\cal S}_{\rho,\eta}$;
\item[(c)] if $c$ satisfies 
\begin{equation}
c \geq \hat{c}(\rho,\eta\,|\,\underline{c},{\cal R}) := \frac{1}{\underline{c}^2}\left[{\cal T}_{\underline{c}}(1,1\,|\,\underline{c},{\cal R}) + \frac{\sqrt{\underline{c}^3 \cdot {\cal T}_{\underline{c}}(1,1\,|\,\underline{c},{\cal R})}}{\min\{\rho,\eta\}} \right], \label{eq:penalty_thresh}
\end{equation}
where ${\cal T}_c(\rho,\eta\,|\,\underline{c},{\cal R})$ is as in (a), then it must terminate successfully.
\end{itemize}
\end{prop}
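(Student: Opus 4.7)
For part (b), the claim follows by unpacking the first-order optimality condition of the block-$t$ proximal subproblem in line~4 of Algorithm~\ref{alg:static_dp_admm}. Writing that condition (and dividing by $\lam$), substituting the definition of $q^k$ from line~6 to collapse the $(1-\theta)p^{k-1} + c(Ax^k_{\leq t}+Ax^{k-1}_{>t}-d)$ into $A_t^* q^k - cA_t^*\sum_{s>t}A_s(x_s^k-x_s^{k-1})$, and adding $\delta_t^k$ from line~9 to both sides yields
\[
v_t^k \;\in\; \nabla_{x_t} f(x^k) + A_t^* q^k + \partial h_t(x_t^k).
\]
Stacking over $t$ gives $v^k \in \nabla f(x^k) + A^* q^k + \partial h(x^k)$. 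Combined with the Step~2a test, this shows $(x^k,q^k)$ solves Problem~${\cal S}_{\rho,\eta}$.

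For part (a), the heart of the proof is a Lyapunov descent analysis. I would introduce a Lyapunov function combining ${\cal L}_c^\theta(x^k;p^k)$, a weighted dual term (e.g.\ $\|p^k\|^2/c$), and a primal drift term $\|x^k-x^{k-1}\|^2$, with coefficients tuned so that condition \eqref{eq:chi_theta_cond} yields net descent. The block proximal sweep in line~4, together with weak convexity (A4) and the stepsize choice $\lam\le 1/(2m)$, should produce a decrease proportional to $\|x^k-x^{k-1}\|^2$, while the damped, under-relaxed dual update in Step~3 should produce a decrease proportional to $\|Ax^k-d\|^2$ once the contraction $(1-\theta)$ is balanced against $\chi c\|Ax^k-d\|^2$ using \eqref{eq:chi_theta_cond}. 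To control $\|p^k\|$, which enters the Lyapunov function through ${\cal L}_c^\theta$, I would use the Slater point $z_\dagger$ from (A6) in a separating-hyperplane-style estimate, exploiting $d_\dagger>0$ to obtain a bound of the form $\|p^k\|\lesssim c\kappa_1 + \|p^0\|$ matching the constants in \eqref{eq:pp_indep_defs}.

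Telescoping the descent inequality over $k=1,\ldots,K$ and combining it with the dual bound and the cross-block Lipschitz hypothesis (A5) (needed to convert the $\|x^k-x^{k-1}\|^2$ bound into a $\|v^k\|^2$ bound via the definition of $v^k$) produces summability estimates of the form $\sum_{k=1}^K\|v^k\|^2\lesssim C_v(c)$ and $\sum_{k=1}^K\|Ax^k-d\|^2\lesssim C_f(c)$, where $C_v$ and $C_f$ are polynomial in $c$ with coefficients matching $\kappa_6$, $\xi^{(0)}_{\cal R}$, $\xi^{(1)}_{\cal R}$, $\tilde\kappa^{(1)}_{\underline c}$, $\tilde\kappa^{(2)}_{\underline c}$ after using $\|p^0\|\le c{\cal R}$ and $c\ge\underline c$. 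Cauchy--Schwarz on the window $[k/2,k]$ bounds ${\cal S}_k^{(v)}\le\sqrt{(2/(k+2))\sum_{i=k/2}^k\|v^i\|^2}$ and analogously for ${\cal S}_k^{(f)}$; plugging these into the Step~2b test and solving for $k$ gives the bound $k\ge {\cal T}_c(\rho,\eta\mid\underline c,{\cal R})$ in \eqref{eq:T_static_def}.

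For part (c), the threshold $\hat c$ is calibrated so that the $c$-dependent Step~2b test becomes too strict to fire without Step~2a having already fired. Concretely, if Step~2b were met at some $k$ while Step~2a had not yet fired, then ${\cal S}_k^{(v)}\le\rho$ and ${\cal S}_k^{(f)}\le\eta\sqrt{k/c^3}$, and a pigeonhole argument on $[k/2,k]$ (at least half the indices satisfy $\|v^i\|\le 2\rho$ and at least half satisfy $\|Ax^i-d\|\le 2\eta\sqrt{k/c^3}$, so some $i$ satisfies both) forces an index where Step~2a would have triggered once $c\ge\hat c$ makes $\sqrt{k/c^3}$ small enough relative to $1$; the precise calibration of $\hat c$ using ${\cal T}_{\underline c}(1,1\mid\underline c,{\cal R})$ as the working ``window length'' yields exactly \eqref{eq:penalty_thresh}. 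The main obstacle across the whole proof is the Lyapunov construction in (a): without the last-block rank condition ${\cal R}_0$, the standard device of bounding dual increments via a Lipschitz estimate on the last block is unavailable, so the entire dual bound must be carried jointly by the dampening $\theta$, the under-relaxation $\chi$, and the Slater point from (A6), with no slack to absorb into error terms.
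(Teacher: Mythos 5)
Your part (b) matches the paper's argument (Lemma~\ref{lem:refine_props}(b)), and the broad architecture you describe for (a) --- a potential function built from ${\cal L}_c^\theta$, dual terms scaled by $1/c$, and drift terms, telescoped and converted into residual bounds via Cauchy--Schwarz over the Step~2b window --- is indeed how the paper proceeds. But there is a genuine gap at the single most delicate point: your control of the multipliers. The bound $\|p^k\|\lesssim \|p^0\|+\kappa_1 c$ is not what the Slater condition (A6) buys (it follows from compactness of ${\cal H}$ alone, via $\|Ax^i-d\|\le\|A\|D_\dagger$ and the geometric recursion in Lemma~\ref{lem:tech_Lagr_inexact}(b)), and it is too weak to finish the proof. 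Since $Ax^i-d=[p^i-(1-\theta)p^{i-1}]/(\chi c)$, a multiplier bound of order $c$ only yields $S^{(f)}={\cal O}(1)$; the feasibility term of the Step~2b test then forces $k\gtrsim c^3/\eta^2$ rather than the $\kappa_3 c/\eta^2$ appearing in \eqref{eq:T_static_def}, and --- fatally for part (c) --- the resulting guarantee $\|Ax^i-d\|\le\eta\sqrt{T/c^3}$ never drops below $\eta$ no matter how large $c$ is taken. What the interiority $d_\dagger>0$ actually delivers (Lemma~\ref{lem:qbounds-2} combined with Lemma~\ref{lem:tech_Lagr_inexact}(c)) is the recursion $\|p^i\|\le(1-\theta)\|p^{i-1}\|+{\cal O}(1+\|v^i\|)$, hence a $c$-\emph{independent} bound $S_{j+1,k}^{(p)}\le\kappa_2$ on the ergodic average once $k-j\ge \kappa_6+\xi_{\cal R}^{(0)}c+\xi_{\cal R}^{(1)}c^2$ (Proposition~\ref{prop:nice_bd}). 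One then pigeonholes to extract individual indices $j\in\{3,\dots,\zeta\}$ and $k\in\{2\zeta+1,\dots,3\zeta\}$ at which $\|p^{j-1}\|,\|p^j\|,\|p^k\|={\cal O}(\kappa_2)$, which upgrades the bounds to $S_{j+1,k}^{(f)}={\cal O}(1/c)$ and $S_{j+1,k}^{(v)}={\cal O}(\sqrt{c/\zeta})$ (Lemma~\ref{lem:spec_term_bd}); the windowed average in Step~2b at iteration $k$ is then within a factor of $2$ of the $[j,k]$ average because $j\le k/2$. This index-selection mechanism --- not a uniform multiplier bound --- is what makes the whole proposition work, and it is absent from your plan.

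A smaller repair is needed in your part (c): pigeonholing $\|v^i\|$ and $\|Ax^i-d\|$ separately over $[k/2,k]$ only produces an index with $\|v^i\|\le 2\rho$, which does not trigger Step~2a. Instead, note that the Step~2b quantity is the window average of the single scalar $\|v^i\|/\rho+\sqrt{c^3/k}\,\|Ax^i-d\|/\eta$, so its being at most $1$ already yields an index where both (nonnegative) summands are at most $1$, i.e.\ $\|v^i\|\le\rho$ and $\|Ax^i-d\|\le\eta\sqrt{k/c^3}$; combining this with $T\le c^3$ --- which is exactly what the calibration \eqref{eq:penalty_thresh} guarantees, via Lemma~\ref{lem:penalty_tech_res} --- closes the argument.
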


We now make some remarks about
Proposition~\ref{prop:cycle_props}.
First, statement (c) implies that Algorithm~\ref{alg:static_dp_admm} terminates successfully if its penalty parameter $c$ is sufficiently large,
i.e., $c=\Omega(\varepsilon^{-1})$ where
$\varepsilon := \min \{ \rho,\eta\}$.
Moreover, if a penalty parameter $c$
satisfying \eqref{eq:penalty_thresh} and the condition that
$c={\cal O}(\varepsilon^{-1})$ is known,
then it follows from Proposition~\ref{prop:cycle_props}(a) that the 
iteration complexity of Algorithm~\ref{alg:static_dp_admm}
for finding a solution of Problem~${\cal S}_{\rho,\eta}$ is
${\cal O}(\varepsilon^{-3})$.

Since a penalty parameter $c$ as in the above paragraph is nearly impossible to compute, we next present an adaptive method, namely, Algorithm~\ref{alg:dp_admm} below, which adaptively increases the penalty parameter $c$, and whose overall number of iterations is also ${\cal O}(\varepsilon^{-3})$.
% on the same order as that of Algorithm~\ref{alg:static_dp_admm} with $c=\hat{c}(\rho,\eta\,| \, c_1, 2\kappa_1)$, where $c_1$ is an initial penalty parameter estimate and $\kappa_1$ is as in \eqref{eq:pp_indep_defs}.

% The next subsection presents IAIPAL which repeatedly invokes S-IAIPAL with increasing penalty parameter values until a $(\hat \rho,\hat\eta)$--approximate solution of \eqref{optl0} is obtained. 
% Moreover, it is shown that, up to a logarithmic term, the overall number of ACG iterations performed by this scheme is
% the same as the one of S-IAIPAL under the condition $\hat{c}_\alpha \le c={\mathcal O}(\hat{c}_\alpha)$.

% This motivates us to develop a dynamic version of Algorithm~\ref{alg:static_dp_admm}, whose pseudocode is given in Algorithm~\ref{alg:dp_admm}. Specifically, Algorithm~\ref{alg:dp_admm} repeatedly calls Algorithm~\ref{alg:static_dp_admm} on an increasing sequence of penalty parameters until the final call terminates successfully.

\begin{algorithm}[!htb]
\caption{DP.ADMM}
\label{alg:dp_admm}
\begin{algorithmic}[1]
\StateO \texttt{Input}: $\bar{z}^0\in {\cal H}$,  {$\lam\in(0,1/(2m)]$}, $c_1 >0$
\StateO \texttt{Require}: $m$ as in \eqref{eq:pp_indep_defs}, $(\rho,\eta)\in (0,1)^2$,
 $(\chi,\theta)$ as in \eqref{eq:chi_theta_cond}
\vspace*{0.5em}
\State $\bar{p}^{0} \gets 0$
\For{$\ell\gets 1,2,\ldots$}
\State \multiline{%
\textbf{call} Algorithm~\ref{alg:static_dp_admm} with inputs $(x^{0},p^{0},\lam,c)=(\bar{z}^{\ell-1},\bar{p}^{\ell-1},\lam,c_\ell)$ and parameters $m$, $(\rho, \eta)$, and $(\chi, \theta)$ to obtain an output quadruple $(\bar{z}^\ell, \bar{p}^\ell, \bar{q}^\ell, \bar{v}^\ell)$}
\If{$ \|\bar{v}^\ell\|\leq\rho$ \textbf{ and } $\|A\bar{z}^{\ell}-d\|\leq \eta$}
\State \Return{$(\bar{z}^\ell, \bar{q}^\ell)$}
\EndIf
\State $c_{\ell+1} \gets 2 c_\ell$
\EndFor
\end{algorithmic}
\end{algorithm}

{Some comments about Algorithm~\ref{alg:dp_admm} are in order. 
First, it employs a ``warm-start'' type strategy for calling Algorithm~\ref{alg:static_dp_admm} at each iteration $\ell$. Specifically, the input
of
the $\ell^{\rm th}$ to Algorithm~\ref{alg:static_dp_admm} is the pair $(\bar z^{\ell-1}, \bar p^{\ell-1})$ output by the previous call to Algorithm~\ref{alg:static_dp_admm}.
Second, the initial penalty parameter
${c_1}$ can be chosen to be any positive scalar, in contrast to many of the methods listed in Section~\ref{sec:intro} where this parameter must be chosen sufficiently large. 
Third, the initial point $\bar{z}^0$ only needs to be in the domain of $h$ and need not be feasible or near feasible. 
Finally, while the initial Lagrange multiplier $\bar{p}^0$ is chosen to be zero, the analysis in this paper can be carried out for any $\bar{p}^0\in A(\rn)$, at the cost of more complicated complexity bounds.}

The next result, whose proof is given in Section~\ref{sec:dynamic_analysis}, gives the complexity of Algorithm~\ref{alg:dp_admm} in terms of the total number of iterations of Algorithm~\ref{alg:static_dp_admm} across all of its calls. 

\begin{thm}
\label{thm:total_compl} 
Define the scalars
\begin{equation}
T_1 := {\cal T}_{c_1}(1,1\,|\,c_1, 2\kappa_1), \quad \varepsilon := \min\{\rho,\eta\}, \label{eq:dyn_aliases}
\end{equation}
where $\kappa_1$ and ${\cal T}_{c}(\cdot, \cdot\,|\, \cdot, \cdot)$ are as in \eqref{eq:pp_indep_defs} and \eqref{eq:T_static_def}, respectively. 
Then, Algorithm~\ref{alg:dp_admm} stops and outputs a pair that solves Problem~${\cal S}_{\rho,\eta}$ in a number of iterations  of Algorithm~\ref{alg:static_dp_admm} bounded by
\begin{align}
T_1 \left( 2E_{0}^{2} +  \frac{E_0 + 2E_{1}^2}{\varepsilon^{2}}+\frac{E_{1}}{\varepsilon^{3}}\right)
\label{eq:gen_compl}
\end{align}
where
\begin{equation}
E_{0} := 2 \left(1 + \frac{T_1^2}{c_1^3}\right), \quad E_1 := 2\sqrt{\frac{T_1}{c_1^{3}}}.
\label{eq:E_defs}
\end{equation}
\end{thm}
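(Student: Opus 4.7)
My plan is to treat Algorithm~\ref{alg:dp_admm} as a warm-started doubling driver that invokes Algorithm~\ref{alg:static_dp_admm} at penalty parameters $c_\ell = 2^{\ell-1} c_1$, and to aggregate the per-call bound from Proposition~\ref{prop:cycle_props}. Correctness is immediate from Proposition~\ref{prop:cycle_props}(b) since Algorithm~\ref{alg:dp_admm} terminates only when the inner call does so successfully; hence only the iteration bound is at stake. Letting $L$ denote the final outer iteration, the quantity to estimate is $\sum_{\ell=1}^{L} {\cal T}_{c_\ell}(\rho, \eta \,|\, c_1, 2\kappa_1)$.

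First, I would verify that every inner call satisfies the hypotheses of Proposition~\ref{prop:cycle_props} with $\underline c = c_1$ and ${\cal R} = 2\kappa_1$. The condition $c_\ell \ge c_1$ is immediate, while $\|\bar p^{\ell-1}\|/c_\ell \le 2\kappa_1$ follows by establishing the stronger invariant $\|\bar p^\ell\| \le \kappa_1 c_\ell$ by induction on $\ell$, with the trivial base case $\bar p^0 = 0$. The inductive step exploits that the multiplier recursion $p^k = (1-\theta) p^{k-1} + \chi c_\ell (Ax^k - d)$ contracts toward the fixed point $\chi c_\ell \|A\| D_\dagger/\theta = \kappa_1 c_\ell$, using the universal estimate $\|Ax^k - d\| \le \|A\| D_\dagger$ that follows from (A2), (A6), and $x^k \in {\cal H}$; iterating yields $\|\bar p^\ell\| \le \max\{\|\bar p^{\ell-1}\|, \kappa_1 c_\ell\}$, and the induction closes via $c_{\ell-1} \le c_\ell$. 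With this invariant, Proposition~\ref{prop:cycle_props}(c) guarantees that the inner call terminates successfully whenever $c_\ell \ge \hat c := \hat c(\rho, \eta \,|\, c_1, 2\kappa_1)$; expanding \eqref{eq:penalty_thresh} with $T_1 = {\cal T}_{c_1}(1,1 \,|\, c_1, 2\kappa_1)$ gives $\hat c = T_1/c_1^2 + \sqrt{T_1/c_1}/\varepsilon$, and the doubling rule then forces $c_L < 2\hat c$.

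Finally, I would sum the per-call bounds by treating ${\cal T}_c$ as a quadratic polynomial in $c$ via \eqref{eq:T_static_def} and applying the geometric-series estimates $\sum_{\ell=1}^L c_\ell \le 2 c_L$ and $\sum_{\ell=1}^L c_\ell^2 \le \tfrac{4}{3} c_L^2$, together with $L \le c_L/c_1$ (from $2^{L-1} \ge L$ for $L \ge 1$), which absorbs the constant-in-$c$ contribution into a $c_L$-linear term. Substituting the bound $c_L < 2\hat c$ from the previous step and using $\varepsilon \in (0,1)$ to collect the $\rho^{-2}$ and $\eta^{-2}$ pieces, I would then rewrite the resulting $\varepsilon^{-k}$ contributions using $E_0 = 2(1 + T_1^2/c_1^3)$ and $E_1 = 2\sqrt{T_1/c_1^3}$ to obtain \eqref{eq:gen_compl}. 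I expect this last step to be the main technical obstacle: each coefficient $\kappa_6$, $\tilde\kappa_{c_1}^{(1)}$, $\tilde\kappa_{c_1}^{(2)}$, $\xi_{2\kappa_1}^{(0)}$, and $\xi_{2\kappa_1}^{(1)}$ must be bounded by an appropriate multiple of $T_1$, and one must carefully identify which pairings of $c_L$ with $\rho^{-2}$ or $\eta^{-2}$ produce the $\varepsilon^{-3}$ leading term as opposed to the $\varepsilon^{-2}$ correction.
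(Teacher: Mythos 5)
Your proposal follows essentially the same route as the paper's proof: the warm-start invariant $\|\bar{p}^{\ell-1}\|/c_\ell \le 2\kappa_1$ established by induction (the paper derives it from Lemma~\ref{lem:tech_Lagr_inexact}(b), i.e., $\|p^j\|\le\|p^0\|+\kappa_1 c$, rather than your fixed-point/contraction phrasing, but both rest on the same geometric decay of the multiplier recursion), the threshold $\hat{c}(\rho,\eta\,|\,c_1,2\kappa_1)$ controlling the number of doublings, and a geometric summation of the per-call complexities. Two small points: the coefficient-bounding step you flag as the main technical obstacle is exactly what Lemma~\ref{lem:penalty_tech_res}(a) already packages, namely ${\cal T}_{c_\ell}(\rho,\eta\,|\,c_1,2\kappa_1)\le[(c_\ell/c_1)^2+c_\ell/(c_1\varepsilon^2)]\,T_1$, so no separate bounding of $\kappa_6,\tilde{\kappa}_{c_1}^{(1)},\ldots$ by multiples of $T_1$ is required; and your claim $c_L<2\hat{c}$ fails in the benign edge case where $c_1$ already exceeds $\hat{c}$, which the paper absorbs by writing $2^{\hat{\ell}}\le 2\max\{1,\hat{c}/c_1\}\le E_0+E_1/\varepsilon$.
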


Since $T_1 = {\cal O}(c_1^{-1})$ in view of \eqref{eq:T_static_def} and \eqref{eq:dyn_aliases}, it follows from \eqref{eq:gen_compl} and \eqref{eq:E_defs} that if $c_1^{-1} = {\cal O}(1)$, then the overall complexity of Algorithm~\ref{alg:dp_admm} is ${\cal O}(\varepsilon^{-3})$.

\section{Analysis of Algorithm~\ref{alg:static_dp_admm}}

\label{sec:static_analysis}

This section presents the main properties of Algorithm~\ref{alg:static_dp_admm}, and it contains
three subsections. More specifically, the first
(resp., second) subsection establishes some key bounds on
the ergodic means of the sequences $\{\|v^k\|\}_{k\geq 0}$ and
$\{\|A x^k-d\|\}_{k\geq 0}$
(resp., the sequence $\{\|p_k\|\}_{k\geq 0}$).
The third one proves Proposition~\ref{prop:cycle_props}.

Throughout this section, we let $\{(v^{i}, x^{i},p^{i}, {q}^{i})\}_{i=1}^{k}$ denote the iterates generated by Algorithm~\ref{alg:static_dp_admm} up to and including the $k^{{\rm th}}$
iteration for some $k\geq 3$. Moreover, for every $i\geq1$
and $(\chi,\theta)\in\r_{++}^{2}$ satisfying \eqref{eq:chi_theta_cond}, we make use of the following
useful constants and shorthand notation 
\begin{equation}
\begin{gathered}a_{\theta}=\theta(1-\theta),\quad b_{\theta}:=(2-\theta)(1-\theta), \\
\gamma_{\theta}:=\frac{(1-2 B\chi b_{\theta})-(1-\theta)^{2}}{2\chi}, \quad
 f^{i}:=Ax^{i}-d,
\end{gathered}
\label{eq:global_admm_consts}
\end{equation}
the aggregated quantities in \eqref{eq:intro_agg}, and
the averaged quantities 
\begin{gather}
S_{j,k}^{(p)} :=\frac{\sum_{i=j}^{k}\|p^{i}\|}{k-j+1},\quad
S_{j,k}^{(v)} :=\frac{\sum_{i=j}^{k}\|v^{i}\|}{k-j+1}, \quad
S_{j,k}^{(f)} :=\frac{\sum_{i=j}^{k}\|f^{i}\|}{k-j+1}. \label{eq:aux_admm_defs}
\end{gather}
for every $j=1,\ldots, k$. Notice that $\gamma_\theta \geq \theta / \chi$ in view of \eqref{eq:chi_theta_cond}. We also denote $\Delta y^{i}$ to be the difference
of iterates for any variable $y$ at iteration $i$, i.e.,
\begin{equation}
\Delta y^{i} \equiv y^{i}-y^{i-1}. \label{not:Delta_var}
\end{equation}

\subsection{Properties of the Key Residuals}
\label{subsec:basic}

This subsection presents bounds on the residuals $\{\|v^{i}\|\}_{i=2}^k$ and $\{\|f^{i}\|\}_{i=2}^k$ generated by Algorithm~\ref{alg:static_dp_admm}. These bounds will be particularly helpful for proving Proposition~\ref{prop:cycle_props} in Subsection~\ref{sec:prop_prf}.

The first result presents some key properties about the generated iterates.

\begin{lem}
\label{lem:refine_props}For $i=1,\ldots,k$, 
\begin{itemize}
\item[(a)] $f^{i}=\left[p^{i}-(1-\theta)p^{i-1}\right]/(\chi c)$;
\item[(b)] $v^{i}\in\nabla f(x^{i})+A^{*}{q}^{i}+\pt h(x^{i})$ and
{
\begin{equation}
\|v^{i}\|\leq B \left(M + \frac{1}{\lam} \right)\|\Delta x^{i}\|_\dagger +  
c \|A\|_\dagger \sum_{t=2}^B \|A_t \Delta x_t^i\|, \label{eq:v_bd}
\end{equation}
where $\|\cdot\|_\dagger$ is as in \eqref{eq:block_norm}.}
\end{itemize}
\end{lem}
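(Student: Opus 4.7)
For part (a), the statement is immediate from the multiplier update rule. The update
$p^{i} = (1-\theta)p^{i-1} + \chi c(Ax^{i}-d)$
directly yields $\chi c f^{i} = p^{i} - (1-\theta)p^{i-1}$, which rearranges to the claim. No further work is needed here.

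For part (b), the plan is to write the first-order optimality condition of the block update \eqref{eq:x_update} at iteration $i$, block $t$, and then massage it into the desired form using the identities for $\delta_t^{i}$, $q^{i}$, and $f^{i}$. Concretely, differentiating $\lambda \mathcal{L}_{c}^{\theta}(x_{<t}^{i}, \cdot, x_{>t}^{i-1}; p^{i-1}) + \tfrac{1}{2}\|\cdot - x_t^{i-1}\|^2$ and setting the resulting inclusion at $u_t = x_t^{i}$ gives
\begin{equation*}
-\tfrac{1}{\lambda}\Delta x_t^{i} \in \nabla_{x_t} f(x_{<t}^{i}, x_t^{i}, x_{>t}^{i-1}) + (1-\theta)A_t^{*} p^{i-1} + c A_t^{*}\bigl(A x_{\le t}^{i} + A x_{>t}^{i-1} - d\bigr) + \pt h_t(x_t^{i}).
\end{equation*}
Now I substitute two algebraic identities: first, by definition of $\delta_t^{i}$,
$\nabla_{x_t} f(x_{<t}^{i}, x_t^{i}, x_{>t}^{i-1}) = \nabla_{x_t} f(x^{i}) - \delta_t^{i};$
second, a direct computation gives
$A x_{\le t}^{i} + A x_{>t}^{i-1} - d = f^{i} - \sum_{s>t} A_s \Delta x_s^{i}.$
Combining these with the identity $(1-\theta)p^{i-1} + c f^{i} = q^{i}$ and moving the error terms to the left-hand side yields exactly
\begin{equation*}
v_t^{i} := \delta_t^{i} + c A_t^{*}\!\!\sum_{s=t+1}^{B} A_s \Delta x_s^{i} - \tfrac{1}{\lambda}\Delta x_t^{i} \in \nabla_{x_t} f(x^{i}) + A_t^{*} q^{i} + \pt h_t(x_t^{i}),
\end{equation*}
and assembling over $t=1,\ldots,B$ delivers the first assertion in (b).

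For the norm inequality, apply the triangle inequality blockwise. Assumption (A5) with $M := \max_t M_t$ together with $\|x_{>t}^{i} - x_{>t}^{i-1}\| \le \sum_{s>t}\|\Delta x_s^{i}\| \le \|\Delta x^{i}\|_\dagger$ gives $\|\delta_t^{i}\| \le M \|\Delta x^{i}\|_\dagger$, so $\sum_t \|\delta_t^{i}\| \le BM\|\Delta x^{i}\|_\dagger$. Similarly, $\sum_t \tfrac{1}{\lambda}\|\Delta x_t^{i}\| = \tfrac{1}{\lambda}\|\Delta x^{i}\|_\dagger \le \tfrac{B}{\lambda}\|\Delta x^{i}\|_\dagger$. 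Finally, swapping the order of summation in
$\sum_{t=1}^{B}\|A_t\|\sum_{s=t+1}^{B}\|A_s\Delta x_s^{i}\|$
bounds this by $\|A\|_\dagger \sum_{s=2}^{B}\|A_s \Delta x_s^{i}\|$. Since $\|v^{i}\| \le \|v^{i}\|_\dagger = \sum_t \|v_t^{i}\|$, adding the three contributions gives the claimed inequality \eqref{eq:v_bd}.

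The proof is essentially routine algebra; the only step requiring some care is the identity $Ax_{\le t}^{i} + Ax_{>t}^{i-1} - d = f^{i} - \sum_{s>t} A_s \Delta x_s^{i}$, which is what makes the seemingly extra term $c A_t^{*}\sum_{s>t} A_s \Delta x_s^{i}$ in the definition of $v_t^{i}$ precisely the correct correction so that the resulting inclusion is with respect to $\nabla f(x^{i}) + A^{*} q^{i} + \pt h(x^{i})$ (rather than the staggered gradient and constraint evaluation). I do not expect any substantive obstacle.
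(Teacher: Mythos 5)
Your proposal is correct and follows essentially the same route as the paper: part (a) reads off the multiplier update, and part (b) writes the block optimality condition, uses the identity $A(x_{\le t}^i,x_{>t}^{i-1})-d=f^i-\sum_{s>t}A_s\Delta x_s^i$ together with $q^i=(1-\theta)p^{i-1}+cf^i$ to recognize $v_t^i$ as the residual, and then bounds $\|v^i\|\le\sum_t\|v_t^i\|$ blockwise via (A5) and the triangle inequality. The only cosmetic difference is that you bound the $M$ and $1/\lambda$ contributions separately while the paper groups them as $(M+1/\lambda)\sum_{s\ge t}\|\Delta x_s^i\|$ before summing over $t$; both yield the same constant $B(M+1/\lambda)$.
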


\begin{proof}
(a) This is immediate from step~3 of Algorithm~\ref{alg:static_dp_admm} and the definition
of $f^{i}$ in \eqref{eq:global_admm_consts}.

(b) We first prove the required inclusion. The optimality of $x_{t}^{k}$
in Step~1 of Algorithm~\ref{alg:static_dp_admm}, and assumption (A4),
imply that 
\begin{align*}
0 & \in\pt\left[{\cal L}_{c}^{\theta}(x_{<t}^{i},\, \cdot \,,x_{>t}^{i-1};p^{i-1})+\frac{1}{2\lam}\|\cdot-x_{k}^{i-1}\|^{2}\right](x^{i})\\
 & =\nabla_{x_{t}}f(x_{\leq t}^i,x_{>t}^{i-1})+A_{t}^{*}\left[(1-\theta)p^{i-1}+c[A(x_{\leq t}^i,x_{>t}^{i-1})-d]\right]+\pt h_{t}(x_{t}^{i})+\frac{1}{\lam}\Delta x_{t}^{i}\\
 & =\nabla_{x_{t}}f(x_{\leq t}^i,x_{>t}^{i-1})+A_{t}^{*}\left({q}^{i} - c\sum_{s=t+1}^{ B}A_s \Delta x_{s}^{i}\right)+\pt h_{t}(x_{t}^{i})+\frac{1}{\lam}\Delta x_{t}^{i}\\
 & =\nabla_{x_{t}}f(x^{i})+A_{t}^{*}{q}^{i}+\pt h_{t}(x_{t}^{i})-v_{t}^{i}.
\end{align*}
for every $1\leq t\leq B$. Hence, the inclusion holds.
To show the inequality, let $1\leq t\leq B$ be fixed
and use the triangle inequality, the definition of $v_{t}^{i}$, and
assumption (A5) to obtain 
\begin{align*}
\|v_{t}^{i}\| & \leq\|\nabla_{x_{t}}f(x_{\leq t}^{i}, x_{>t}^{i})-\nabla_{x_{t}}f(x_{\leq t}^i,x_{>t}^{i-1})\|+c\sum_{s=t+1}^{ B}\|A_{t}^{*}A_{s}\Delta x_{s}^{i}\|+\frac{1}{\lambda}\|\Delta x_{t}^{i}\|\\
 & \leq M_t \|x_{>t}^{i}-x_{>t}^{i-1}\|+c \|A_t\| \sum_{s=t+1}^{ B}\|A_{s}\Delta x_{s}^{i}\|+ \frac{1}{\lam}\|\Delta x_{t}^{i}\|\\
 & \leq 
 \left(M+ \frac{1}{\lam}\right)\sum_{s=t}^{ B}\|\Delta x_{s}^{i}\|+ c\|A_t\| \sum_{t=2}^B \|A_t \Delta x_t^i\|. 
\end{align*}
Summing the above bound from $t=1$ to $B$, and using the definition of $M$ in \eqref{eq:pp_indep_defs} and the triangle inequality, we conclude that 
\begin{align*}
\|v^{i}\| & \leq\sum_{t=1}^{ B}\|v_{t}^{i}\|
 \leq\left(M+ \frac{1}{\lam}\right)\sum_{t=1}^{ B}\sum_{s=t}^{ B}\|\Delta x_{s}^{i}\|+c\|A\|_\dagger \sum_{t=2}^B \|A_t \Delta x_t^i\| \\
 &\leq B\left(M+ \frac{1}{\lam}\right)\|\Delta x^{i}\|_\dagger + c\|A\|_\dagger \sum_{t=2}^B \|A_t \Delta x_t^i\|.
\end{align*}
\end{proof}

Notice that part (b) of the above result implies that $(\bar{x},\bar{v},\bar{p})=(x^{i},v^{i},{q}^{i})$ satisfies the inclusion in \eqref{eq:approx_soln}. 
Hence, if $\|v^{i}\|$ and $\|f^{i}\|$ are sufficiently small at some iteration $i$, then Algorithm~\ref{alg:static_dp_admm} clearly returns a solution of Problem~${\cal S}_{\rho,\eta}$ at iteration $i$, i.e., Proposition~\ref{prop:cycle_props}(b) holds. 
However, to understand when Algorithm~\ref{alg:static_dp_admm} terminates, we will need to develop  more refined bounds on $\|v_i\|$ and $\|f_i\|$.

To begin, we present some relations between the perturbed augmented Lagrangian ${\cal L}_c^\theta(\cdot;\cdot)$ and the iterates $\{(x^i, p^i)\}_{i=1}^k$. For conciseness, its proof is given in Appendix~\ref{app:tech_ineq}.

\begin{lem}
\label{lem:key_iter_relations}
For $i=1,\ldots,k$,
\begin{itemize}
\item[(a)] ${\cal L}_{c}^{\theta}(x^{i};p^{i})-{\cal L}_{c}^{\theta}(x^{i};p^{i-1}) = b_{\theta}\|\Delta p^{i}\|^{2} / (2\chi c) + a_{\theta} \left(\|p^{i}\|^{2}-\|p^{i-1}\|^{2}\right) / (2\chi c);$
\item[(b)] {${\cal L}_{c}^{\theta}(x^{i};p^{i-1})-{\cal L}_{c}^{\theta}(x^{i-1};p^{i-1}) 
\leq
- \|\Delta x^{i}\|^{2}/(2\lam) - c \sum_{t=1}^{ B}\|A_{t}\Delta x_{t}^{i}\|^{2} / 2;$}
\item[(c)] if $i\geq2$, it holds that
\begin{equation}
\frac{b_{\theta}}{2\chi c}\|\Delta p^{i}\|^{2}-\frac{c}{4}\sum_{t=1}^{ B}\|A_{t}\Delta x_{t}^{i}\|^{2}\leq\frac{\gamma_{\theta}}{4 B\chi c}\left(\|\Delta p^{i-1}\|^{2}-\|\Delta p^{i}\|^{2}\right).\label{eq:acc_resid_bd}
\end{equation}
\end{itemize}
\end{lem}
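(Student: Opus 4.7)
The plan is to establish parts (a), (b), and (c) in order, each being a careful algebraic manipulation that exploits the structure of the dampened augmented Lagrangian together with the updates in Algorithm~\ref{alg:static_dp_admm}.

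For (a), I would observe that ${\cal L}_{c}^{\theta}(x;\cdot)$ is affine in $p$, so ${\cal L}_{c}^{\theta}(x^{i};p^{i})-{\cal L}_{c}^{\theta}(x^{i};p^{i-1}) = (1-\theta)\langle \Delta p^{i}, f^{i}\rangle$. Using Lemma~\ref{lem:refine_props}(a) to write $\chi c\,f^{i} = p^{i} - (1-\theta)p^{i-1}$, this becomes $[(1-\theta)/(\chi c)]\langle p^{i}-p^{i-1}, p^{i}-(1-\theta)p^{i-1}\rangle$. Expanding the inner product via $2\langle u,v\rangle = \|u\|^{2}+\|v\|^{2}-\|u-v\|^{2}$ with $u=p^{i}$, $v=p^{i-1}$ gives $(\theta/2)(\|p^{i}\|^{2}-\|p^{i-1}\|^{2}) + ((2-\theta)/2)\|\Delta p^{i}\|^{2}$; multiplying by $(1-\theta)/(\chi c)$ and identifying $a_{\theta}=\theta(1-\theta)$ and $b_{\theta}=(2-\theta)(1-\theta)$ produces the claimed identity.

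For (b), I would proceed block by block. Set $\psi_{t}(u) := {\cal L}_{c}^{\theta}(x_{<t}^{i}, u, x_{>t}^{i-1}; p^{i-1})$ so that telescoping over the intermediate iterates $(x_{<t}^{i}, x_{t}^{i}, x_{>t}^{i-1})$ gives ${\cal L}_{c}^{\theta}(x^{i};p^{i-1}) - {\cal L}_{c}^{\theta}(x^{i-1};p^{i-1}) = \sum_{t=1}^{B}[\psi_{t}(x_{t}^{i})-\psi_{t}(x_{t}^{i-1})]$. By assumption (A4), $\psi_{t}+(m_{t}/2)\|\cdot\|^{2}$ is convex, and the quadratic penalty term $(c/2)\|A_{t}u + \text{const}\|^{2}$ contributes a constant Hessian $c A_{t}^{*}A_{t}$. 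The optimality of $x_{t}^{i}$ in Step~1 gives $-\Delta x_{t}^{i}/\lam \in \pt\psi_{t}(x_{t}^{i})$, so $-\Delta x_{t}^{i}/\lam + m_{t} x_{t}^{i} \in \pt[\psi_{t}+(m_{t}/2)\|\cdot\|^{2}](x_{t}^{i})$; combining this with the quadratic lower-bound inequality at $u=x_{t}^{i-1}$ and simplifying the $m_t$ cross-terms yields $\psi_{t}(x_{t}^{i-1}) - \psi_{t}(x_{t}^{i}) \geq (1/\lam - m_{t}/2)\|\Delta x_{t}^{i}\|^{2} + (c/2)\|A_{t}\Delta x_{t}^{i}\|^{2}$. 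Since $\lam \leq 1/(2m)$ forces $1/\lam - m_{t}/2 \geq 1/(2\lam)$, summing over $t$ delivers (b).

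For (c), I would combine Lemma~\ref{lem:refine_props}(a) with $f^{i}-f^{i-1} = \sum_{t}A_{t}\Delta x_{t}^{i}$ to get $\chi c \sum_{t}A_{t}\Delta x_{t}^{i} = \Delta p^{i} - (1-\theta)\Delta p^{i-1}$. Cauchy--Schwarz then gives $B\sum_{t}\|A_{t}\Delta x_{t}^{i}\|^{2} \geq \|\sum_{t}A_{t}\Delta x_{t}^{i}\|^{2} = (\chi c)^{-2}\|\Delta p^{i} - (1-\theta)\Delta p^{i-1}\|^{2}$. The polarization-type identity $\|w-(1-\theta)z\|^{2} = \theta\|w\|^{2} - a_{\theta}\|z\|^{2} + (1-\theta)\|w-z\|^{2}$ (with $w=\Delta p^{i}$, $z=\Delta p^{i-1}$), after dropping the nonnegative last term, yields $B\chi c^{2}\sum_{t}\|A_{t}\Delta x_{t}^{i}\|^{2} \geq (\theta/\chi)\|\Delta p^{i}\|^{2} - (a_{\theta}/\chi)\|\Delta p^{i-1}\|^{2}$. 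Clearing denominators in the target inequality reduces (c) to the two scalar inequalities $2Bb_{\theta}+\gamma_{\theta} \leq \theta/\chi$ and $\gamma_{\theta} \geq a_{\theta}/\chi$; both follow immediately from \eqref{eq:chi_theta_cond}, since $2B\chi b_{\theta} \leq \theta^{2}$ implies $1-(1-\theta)^{2}-2B\chi b_{\theta} \geq 2\theta(1-\theta)$, and plugging this into the definition of $\gamma_{\theta}$ gives both bounds. The main obstacle will be in (c): one must pick exactly the identity above so that the $\theta$ and $a_{\theta}$ factors appear in their correct positions, since a looser Young-type bound would either mismatch $\gamma_{\theta}$ and $b_{\theta}$ or force a stricter hyperparameter condition than \eqref{eq:chi_theta_cond}. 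Parts (a) and (b) are otherwise standard Lagrangian-descent manipulations once the correct Hessian structure is isolated.
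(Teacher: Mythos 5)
Your proposal is correct and follows essentially the same route as the paper: part (a) is the same affine-in-$p$ expansion combined with Lemma~\ref{lem:refine_props}(a), and part (b) is the same block-by-block telescoping plus the strong-convexity inequality at $u_t=x_t^{i-1}$ that the paper isolates as Lemma~\ref{lem:gen_prox_ineq}. The only genuine deviation is in part (c): where the paper invokes the imported technical inequality of Lemma~\ref{lem:tech_ADeltaZ_DeltaP} with $\zeta=2B\chi b_\theta$, you instead use the exact identity $\|w-(1-\theta)z\|^2=\theta\|w\|^2-a_\theta\|z\|^2+(1-\theta)\|w-z\|^2$, drop the nonnegative term, and reduce the claim to the two coefficient inequalities $2Bb_\theta+\gamma_\theta\le\theta/\chi$ and $\gamma_\theta\ge a_\theta/\chi$, both of which I verified are equivalent to (resp.\ implied by) \eqref{eq:chi_theta_cond}. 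Your version is self-contained and makes transparent exactly where the hyperparameter condition enters, at the cost of matching coefficients by hand; the paper's version delegates that bookkeeping to the cited lemma. Both are valid.
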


The next result uses the above relations to establish a bound on the quantities in the right-hand-side of \eqref{eq:v_bd}.

\begin{lem}
\label{lem:main_descent}
For $j = 1,\ldots, k$,
\begin{align}
& \sum_{i=j+1}^{k} \|v^i\|^2
 \leq
(\kappa_0^2 + \kappa_5 c) \left[ \Psi_{j}(c) - \Psi_{k}(c) \right],
\label{eq:poten_resid_bd}
\end{align}
where $(\kappa_0, \kappa_5)$ is as in \eqref{eq:pp_indep_defs}, and denoting $(a_\theta, \gamma_\theta)$ and  as in \eqref{eq:global_admm_consts}, we have
\begin{align}
\Psi_{i}(c) & :={\cal L}_{c}^{\theta}(x^{i};p^{i})-\frac{a_{\theta}}{2\chi c}\|p^{i}\|^{2}+\frac{\gamma_{\theta}}{4 B\chi c}\|\Delta p^{i}\|^{2}\quad\forall i\geq1.
\end{align}
\end{lem}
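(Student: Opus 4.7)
The plan is to derive a one-step descent inequality for $\Psi_i(c)$ that provides precisely the two positive quantities needed to dominate $\|v^i\|^2$, then telescope. I would carry this out in three steps: combining parts (a) and (b) of Lemma~\ref{lem:key_iter_relations} to get a descent on $\mathcal{L}_c^\theta$ adjusted by the $\|p\|^2$ term, absorbing the residual $\frac{b_\theta}{2\chi c}\|\Delta p^i\|^2$ via part (c) so that the $\|\Delta p\|^2$ pieces turn into a clean telescoping increment of the potential $\Psi_i(c)$, and then bounding $\|v^i\|^2$ by the per-iteration decrease.

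First, summing Lemma~\ref{lem:key_iter_relations}(a) and (b) and rearranging gives, for every $i\ge 1$,
\[
\mathcal{L}_{c}^{\theta}(x^{i};p^{i})-\mathcal{L}_{c}^{\theta}(x^{i-1};p^{i-1})
-\tfrac{a_{\theta}}{2\chi c}\bigl(\|p^{i}\|^{2}-\|p^{i-1}\|^{2}\bigr)
\le -\tfrac{\|\Delta x^{i}\|^{2}}{2\lambda}-\tfrac{c}{2}\sum_{t=1}^{B}\|A_{t}\Delta x_{t}^{i}\|^{2}
+\tfrac{b_{\theta}}{2\chi c}\|\Delta p^{i}\|^{2}.
\]
Adding $\frac{\gamma_\theta}{4B\chi c}(\|\Delta p^i\|^2-\|\Delta p^{i-1}\|^2)$ to both sides recognizes the left-hand side as $\Psi_i(c)-\Psi_{i-1}(c)$. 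For $i\ge 2$, Lemma~\ref{lem:key_iter_relations}(c) then bounds $\frac{b_\theta}{2\chi c}\|\Delta p^i\|^2$ by $\frac{c}{4}\sum_t\|A_t\Delta x_t^i\|^2+\frac{\gamma_\theta}{4B\chi c}(\|\Delta p^{i-1}\|^2-\|\Delta p^i\|^2)$, and the $\|\Delta p\|^2$ telescoping parts cancel in pairs. This yields the key inequality
\[
\Psi_{i-1}(c)-\Psi_{i}(c)\ \ge\ \tfrac{\|\Delta x^{i}\|^{2}}{2\lambda}+\tfrac{c}{4}\sum_{t=1}^{B}\|A_{t}\Delta x_{t}^{i}\|^{2}\qquad (i\ge 2).
\]

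Next, I would dominate $\|v^i\|^2$ by precisely the same two nonnegative quantities. Starting from Lemma~\ref{lem:refine_props}(b), I apply $(a+b)^2\le 2a^2+2b^2$, then $\|\Delta x^i\|_\dagger^2\le B\|\Delta x^i\|^2$ (Cauchy--Schwarz on $B$ summands) and $\bigl(\sum_{t=2}^{B}\|A_t\Delta x_t^i\|\bigr)^2\le (B-1)\sum_t\|A_t\Delta x_t^i\|^2$. Rescaling to match the descent quantities and checking that the resulting coefficients are $4B^3(\lambda M+1)^2/\lambda\le \kappa_0^2$ (using $B\ge 1$) and $8(B-1)\|A\|_\dagger^2=\kappa_5$, I obtain
\[
\|v^i\|^2\ \le\ \kappa_0^2\cdot\tfrac{\|\Delta x^i\|^2}{2\lambda}+\kappa_5 c\cdot\tfrac{c}{4}\sum_{t=1}^{B}\|A_t\Delta x_t^i\|^2\ \le\ (\kappa_0^2+\kappa_5 c)\Bigl[\tfrac{\|\Delta x^i\|^2}{2\lambda}+\tfrac{c}{4}\sum_{t=1}^{B}\|A_t\Delta x_t^i\|^2\Bigr],
\]
where the last step uses the elementary inequality $aX+bY\le(a+b)(X+Y)$ for $a,b,X,Y\ge 0$. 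Combining with the descent inequality, $\|v^i\|^2\le(\kappa_0^2+\kappa_5 c)(\Psi_{i-1}(c)-\Psi_i(c))$ for every $i\ge 2$, so summing (telescoping) from $i=j+1$ to $k$ (noting $j+1\ge 2$) yields the stated bound.

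The main technical obstacle is the bookkeeping in the second step: the coefficient $\gamma_\theta/(4B\chi c)$ must be chosen exactly so that the residual $\frac{b_\theta}{2\chi c}\|\Delta p^i\|^2$ after applying (c) merges into a perfect telescope, and one must verify that the factor $B$ appearing in $\kappa_0^2=4B^4(\lambda M+1)^2/\lambda$ absorbs the $B^{3/2}$ incurred by converting $\|\Delta x^i\|_\dagger$ to $\|\Delta x^i\|$ via Cauchy--Schwarz. Beyond these constant-tracking details, the argument is mechanical.
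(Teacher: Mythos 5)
Your proposal is correct and follows essentially the same route as the paper: it uses Lemma~\ref{lem:key_iter_relations}(a)--(c) to establish the per-iteration descent $\Psi_{i-1}(c)-\Psi_i(c)\geq \|\Delta x^i\|^2/(2\lambda)+\tfrac{c}{4}\sum_t\|A_t\Delta x_t^i\|^2$, bounds $\|v^i\|^2$ by the same two quantities via \eqref{eq:v_bd} and Cauchy--Schwarz with the constants $\kappa_0^2$ and $\kappa_5 c$, and telescopes. The only differences are cosmetic (you derive the descent before the residual bound, and your Cauchy--Schwarz step is marginally sharper, giving $4B^3$ in place of the paper's $4B^4$, both dominated by $\kappa_0^2$).
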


\begin{proof}
Using the inequality $\|z\|_1^2 \leq n\|z\|_2^2$ for $z\in\rn$ and \eqref{eq:v_bd}, we first have that
\begin{align}
\sum_{i=j+1}^{k}\|v^{i}\|^{2} & \overset{\eqref{eq:v_bd}}{\leq}\sum_{i=j+1}^{k}\left[B\left(M + \frac{1}{\lambda}\right)\|\Delta x^{i}\|_{\dagger}+ c\|A\|_\dagger \sum_{t=2}^B \|A_t \Delta x_t^i\| \right]^{2}\nonumber \\
 & \leq\sum_{i=j+1}^{k}2B^{2}\left(M + \frac{1}{\lambda}\right)^{2}\|\Delta x^{i}\|_{\dagger}^{2}+ c^2\|A\|_\dagger^2 \left(\sum_{t=2}^B \|A_t \Delta x_t^i\|\right)^2 \nonumber \\
 & \leq\sum_{i=j+1}^{k}2B^{4}\left(M+\frac{1}{\lambda}\right)^{2}\|\Delta x^{i}\|^{2}+2 (B-1) c^{2} \|A\|_\dagger^2 \sum_{t=2}^{B}\|A_{t}\Delta x_{t}^{i}\|^{2}\nonumber \\
 & \leq (\kappa_0^2 + \kappa_5 c) \sum_{i=j+1}^{k}\left[\frac{1}{2\lambda}\|\Delta x^{i}\| +\frac{c}{4}\sum_{t=2}^{B}\|A_{t}\Delta x_{t}^{i}\|^{2}\right].
 \label{eq:v_sqr_tech_bd}
\end{align}
Combining Lemma~\ref{lem:key_iter_relations}(a)--(c),
 the definition of $\Psi_{\theta}^{i}$, and the bound $(a+b)^2\leq 2a^2 + 2b^2$ for $a,b\in\r_+$, we also have that
\begin{align*}
& \frac{1}{2\lam} \|\Delta x^{i}\|^{2}+\frac{c}{4}\sum_{t=2}^{ B}\|A_{t}\Delta x^{i}\|^{2} \\
 \overset{\text{L.\ref{lem:key_iter_relations}(a)-(b)}}{\leq}
 % - \Delta_{{\cal L}, j}^{(1)}
 & {\cal L}_c^\theta (x^{j-1};p^{j-1}) - {\cal L}_c^\theta (x^{j};p^{j}) + 
  \frac{a_{\theta}}{2\chi c} \Delta_{p,j}^{(2)} +\frac{b_{\theta}}{2\chi c}\|\Delta p^{i}\|^{2}-\frac{c}{4} \sum_{t=1}^{ B}\|A_{t}\Delta x_{t}^{i}\|^{2}\\
  \overset{\text{L.\ref{lem:key_iter_relations}(c)}}{\leq}
% - \Delta_{{\cal L}, j}^{(1)} +
&  {\cal L}_c^\theta (x^{j-1};p^{j-1}) - {\cal L}_c^\theta (x^{j};p^{j}) +
\frac{a_{\theta}}{2\chi c} \Delta_{p,j}^{(2)} +\frac{\gamma_{\theta}}{4 B\chi c}\left(\|\Delta p^{i-1}\|^{2}-\|\Delta p^{i}\|^{2}\right)\\
  = \ \ \ & \Psi_{i-1}(c)-\Psi_{i}(c),
\end{align*}
where $\Delta_{p,j}^{(2)} := \|p^{j}\|^2 - \|p^{j-1}\|^2$.
Consequently, summing the above inequality from $i=j+1$ to $k$, and combining the resulting inequality with \eqref{eq:v_sqr_tech_bd}, yields the desired bound. 
\end{proof}

We now bound the quantity on the right-hand-side of \eqref{eq:poten_resid_bd}
\begin{lem} \label{lem:Lagr_bds}
For any $j\geq 1$ and $k\geq 1$, 
\begin{itemize}
    \item[(a)] ${\cal L}_{c}^{\theta}(x^{j};p^{j})\leq \phi(x^{j}) + 3(\|p^j\|^2 + \|p^{j-1}\|^2)/(\chi^2 c)$; 
    \item[(b)] ${\cal L}_{c}^{\theta}(x^{k};p^{k})\geq \phi(x^{k})-\|p^{k}\|^{2}/(2c)$;
    \item[(c)] it holds that
    \begin{equation}
        \label{eq:poten_upper_bd}
        \Psi_j(c) - \Psi_k(c) 
        \leq  
        \Delta_\phi + 4\left(\frac{\|p^{j}\|^{2}+\|p^{j-1}\|^{2}+\|p^{k}\|^{2}}{\chi^{2}c}\right),
    \end{equation}
    where $\Psi_i(\cdot)$ and $\Delta_\phi$ are as in \eqref{eq:poten_resid_bd} and \eqref{eq:pp_indep_defs}, respectively.
\end{itemize}
\end{lem}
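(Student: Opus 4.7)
The key observation is that all three parts center on replacing the inner-product term $(1-\theta)\langle p^i, f^i\rangle$ and the quadratic term $c\|f^i\|^2/2$ appearing in ${\cal L}_c^\theta(x^i;p^i)$ by expressions in $\|p^i\|$ and $\|p^{i-1}\|$ via the identity $f^i = [p^i - (1-\theta)p^{i-1}]/(\chi c)$ from Lemma~\ref{lem:refine_props}(a). For part~(a), my plan is to write
\[
{\cal L}_c^\theta(x^j;p^j) = \phi(x^j) + (1-\theta)\langle p^j, f^j\rangle + \frac{c}{2}\|f^j\|^2,
\]
substitute the expression for $f^j$, and then bound each of the two resulting terms by applying the Cauchy–Schwarz inequality together with the elementary inequalities $2|\langle p^j,p^{j-1}\rangle| \le \|p^j\|^2 + \|p^{j-1}\|^2$ and $\|p^j - (1-\theta)p^{j-1}\|^2 \le 2\|p^j\|^2 + 2\|p^{j-1}\|^2$. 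Using also $1/(\chi c) \le 1/(\chi^2 c)$ since $\chi \le 1$, a straightforward accounting gives a constant of at most $3$ multiplying $(\|p^j\|^2 + \|p^{j-1}\|^2)/(\chi^2 c)$.

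For part~(b), I would again expand ${\cal L}_c^\theta(x^k;p^k)$, but this time apply the weighted Young inequality
\[
(1-\theta)|\langle p^k, f^k\rangle| \le \frac{(1-\theta)^2\|p^k\|^2}{2c} + \frac{c\|f^k\|^2}{2},
\]
so that the unfavorable $c\|f^k\|^2/2$ term exactly cancels the other occurrence of $c\|f^k\|^2/2$ in ${\cal L}_c^\theta$. The remaining negative term is $-(1-\theta)^2\|p^k\|^2/(2c)$, which is bounded below by $-\|p^k\|^2/(2c)$ since $(1-\theta)^2\le 1$.

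For part~(c), the plan is to subtract the two definitions of $\Psi_i$ to obtain
\[
\Psi_j(c) - \Psi_k(c) = \bigl[{\cal L}_c^\theta(x^j;p^j)-{\cal L}_c^\theta(x^k;p^k)\bigr] - \frac{a_\theta}{2\chi c}\bigl(\|p^j\|^2-\|p^k\|^2\bigr) + \frac{\gamma_\theta}{4B\chi c}\bigl(\|\Delta p^j\|^2-\|\Delta p^k\|^2\bigr),
\]
apply (a) to the first bracket at index $j$ and (b) at index $k$ (and bound $\phi(x^j)-\phi(x^k)\le \Delta_\phi$), then control the remaining nonpositive and nonnegative contributions. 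The only ``bad'' terms are $\frac{a_\theta}{2\chi c}\|p^k\|^2$ and $\frac{\gamma_\theta}{4B\chi c}\|\Delta p^j\|^2$. The first is trivially bounded by $\|p^k\|^2/(2\chi^2 c)$ using $a_\theta \le 1$, while for the second I would combine $\|\Delta p^j\|^2 \le 2(\|p^j\|^2+\|p^{j-1}\|^2)$ with an upper bound $\gamma_\theta \le 1/(2\chi)$ read directly off the definition in \eqref{eq:global_admm_consts}. Adding everything up, the coefficient of $(\|p^j\|^2+\|p^{j-1}\|^2)/(\chi^2 c)$ becomes $3 + 1/(4B) \le 4$ and the coefficient of $\|p^k\|^2/(\chi^2 c)$ becomes $1 \le 4$, producing exactly the stated bound.

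The main obstacle is not conceptual but bookkeeping: in (c) one has to simultaneously handle several small contributions with the right factors of $\chi$, $a_\theta$, $\gamma_\theta$, and $1/B$, and the trick is to absorb every constant into the single clean factor of $4/(\chi^2)$ while using only bounds that are uniform in $\theta$ and $\chi$ (so that no hidden appeal to \eqref{eq:chi_theta_cond} is needed beyond the mild bound $\gamma_\theta \le 1/(2\chi)$, which follows directly from the definition).
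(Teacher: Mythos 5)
Your proposal is correct and follows essentially the same route as the paper's proof: substituting $f^i=[p^i-(1-\theta)p^{i-1}]/(\chi c)$ and applying Young-type inequalities with $\chi\le 1$ for (a), completing the square (your weighted Young inequality is algebraically the same step) for (b), and for (c) expanding $\Psi_j-\Psi_k$, discarding the nonpositive terms, and combining (a), (b), $a_\theta\le 1$, $\gamma_\theta\le 1/(2\chi)$, and $\|\Delta p^j\|^2\le 2(\|p^j\|^2+\|p^{j-1}\|^2)$ exactly as the paper does. One small caveat: discarding $-\gamma_\theta\|\Delta p^k\|^2/(4B\chi c)$ as nonpositive requires $\gamma_\theta\ge 0$, which does rest on \eqref{eq:chi_theta_cond} (the paper notes $\gamma_\theta\ge\theta/\chi$ under that condition), so your closing remark that nothing beyond $\gamma_\theta\le 1/(2\chi)$ is needed is slightly too strong, though harmless here since \eqref{eq:chi_theta_cond} is a standing assumption.
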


\begin{proof} 
(a)--(b) See Appendix~\ref{app:tech_ineq}.

(c) Using parts (a)--(b), the fact that $a_\theta \in (0,1)$ and $(\chi,\theta)\in(0,1)^{2}$, the relation $(a+b)^2\leq 2a^2 + 2b^2$ for $a,b\in \r_+$, and the bound
$\gamma_{\theta}\leq1/(2\chi)$, it holds that
\begin{align*}
& \Psi_{j}(c)-\Psi_{k}(c)\\
&=\left[{\cal L}_{c}^{\theta}(x^{j};p^{j})-{\cal L}_{c}^{\theta}(x^{k};p^{k})\right]+\frac{a_{\theta}(\|p^{k}\|^{2}-\|p^{j}\|^{2})}{2\chi c}+\frac{\gamma_{\theta}(\|\Delta p^{j}\|^{2}-\|\Delta p^{k}\|^{2})}{4B\chi c}\\
&\leq \left[{\cal L}_{c}^{\theta}(x^{j};p^{j})-{\cal L}_{c}^{\theta}(x^{k};p^{k})\right]+\frac{a_{\theta}\|p^{k}\|^{2}}{2\chi c}+\frac{\gamma_{\theta}\|\Delta p^{j}\|^{2}}{4B\chi c}\\
 & \leq\left[{\cal L}_{c}^{\theta}(x^{j};p^{j})-{\cal L}_{c}^{\theta}(x^{k};p^{k})\right]+\frac{\|p^{k}\|^{2}}{2\chi c}+\frac{\|p^{j-1}\|^{2} + \|p^{j}\|^{2}}{4B\chi^{2}c}\\
 & \overset{\text{(a)-(b)}}{\leq}\left[\phi(x^{j})-\phi(x^{k})+\frac{3(\|p^{j}\|^{2}+\|p^{j-1}\|^{2})}{\chi^2 c}+\frac{\|p^{k}\|^{2}}{2c}\right] + \\
 & \qquad \quad \frac{\|p^{k}\|^{2}}{2\chi c}+\frac{\|p^{j-1}\|^{2} + \|p^{j}\|^{2}}{4B\chi^{2}c} \leq\Delta_{\phi} + 4\left(\frac{\|p^{j}\|^{2}+\|p^{j-1}\|^{2}+\|p^{k}\|^{2}}{\chi^{2}c}\right).
\end{align*}
\end{proof}

The next result presents bounds on $S_{j+1,k}^{(f)}$ and $S_{j+1,k}^{(v)}$.

\begin{prop}
\label{prop:cross_mult_bd} For $j = 1,\ldots, k-1$,
\begin{align}
S_{j+1,k}^{(f)} & \leq \frac{\|p^{j}\|+2S_{j+1,k}^{(p)}}{\chi c}, 
\label{eq:main_fk_bd} \\
S_{j+1,k}^{(v)} & \leq 2\sqrt{\frac{\kappa_0^2 + \kappa_5 c}{k-j}} \left( \Delta_\phi^{1/2} + \frac{\|p^{j}\|+\|p^{j-1}\|+\|p^{k}\|}{\chi\sqrt{c}} \right),
\label{eq:main_vk_bd}
\end{align}
where $(\kappa_0, \kappa_5, \Delta_\phi)$ is as in \eqref{eq:pp_indep_defs}.
\end{prop}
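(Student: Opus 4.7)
The plan is to handle the two bounds independently, since each follows from a different chain of previously established identities. Neither step appears to involve any subtle new argument; this proposition is essentially a bookkeeping consequence of Lemmas~\ref{lem:refine_props}(a), \ref{lem:main_descent}, and \ref{lem:Lagr_bds}(c).

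For the feasibility bound \eqref{eq:main_fk_bd}, I would invoke Lemma~\ref{lem:refine_props}(a), which gives $f^{i}=[p^{i}-(1-\theta)p^{i-1}]/(\chi c)$. The triangle inequality and $1-\theta\le 1$ then yield $\|f^{i}\|\le(\|p^{i}\|+\|p^{i-1}\|)/(\chi c)$. Summing from $i=j+1$ to $k$ and dividing by $k-j$ gives
\begin{equation*}
S_{j+1,k}^{(f)} \leq \frac{1}{\chi c}\left(S_{j+1,k}^{(p)} + \frac{1}{k-j}\sum_{i=j+1}^{k}\|p^{i-1}\|\right).
\end{equation*}
The last sum equals $\|p^{j}\|+\sum_{i=j+1}^{k-1}\|p^{i}\|\le \|p^{j}\|+(k-j)S_{j+1,k}^{(p)}$, so after using $k-j\geq 1$ one arrives at $S_{j+1,k}^{(f)}\le(\|p^{j}\|+2S_{j+1,k}^{(p)})/(\chi c)$.

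For the stationarity bound \eqref{eq:main_vk_bd}, the plan is to pass from $S_{j+1,k}^{(v)}$ to a sum of squares via Cauchy--Schwarz:
\begin{equation*}
S_{j+1,k}^{(v)} \;=\; \frac{1}{k-j}\sum_{i=j+1}^{k}\|v^{i}\| \;\leq\; \frac{1}{\sqrt{k-j}}\sqrt{\sum_{i=j+1}^{k}\|v^{i}\|^{2}}.
\end{equation*}
Then I would chain Lemma~\ref{lem:main_descent} (to bound the sum of $\|v^{i}\|^{2}$ by $(\kappa_{0}^{2}+\kappa_{5}c)[\Psi_{j}(c)-\Psi_{k}(c)]$) with Lemma~\ref{lem:Lagr_bds}(c) (to bound $\Psi_{j}(c)-\Psi_{k}(c)$ by $\Delta_{\phi}+4(\|p^{j}\|^{2}+\|p^{j-1}\|^{2}+\|p^{k}\|^{2})/(\chi^{2}c)$). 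Using $\sqrt{a+b}\le\sqrt{a}+\sqrt{b}$ and $\sqrt{\alpha^{2}+\beta^{2}+\gamma^{2}}\le\alpha+\beta+\gamma$ for nonnegative reals, one obtains
\begin{equation*}
\sqrt{\sum_{i=j+1}^{k}\|v^{i}\|^{2}} \;\leq\; \sqrt{\kappa_{0}^{2}+\kappa_{5}c}\left[\sqrt{\Delta_{\phi}}+\frac{2(\|p^{j}\|+\|p^{j-1}\|+\|p^{k}\|)}{\chi\sqrt{c}}\right].
\end{equation*}
Dividing by $\sqrt{k-j}$ and slackening the coefficient of $\sqrt{\Delta_{\phi}}$ from $1$ to $2$ to factor out a common $2$ gives exactly \eqref{eq:main_vk_bd}.

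The main (mild) obstacle is just to make sure the index-shifting in the first part gives $\|p^{j}\|$ rather than $\|p^{j-1}\|$, and to not lose a factor in the square-root manipulations of the second part; both are routine. No new estimate beyond the three referenced lemmas is needed, so the proof should be short.
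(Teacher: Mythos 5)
Your proposal is correct and follows essentially the same route as the paper: \eqref{eq:main_fk_bd} via Lemma~\ref{lem:refine_props}(a), the triangle inequality, and the index shift yielding the extra $\|p^{j}\|$ term, and \eqref{eq:main_vk_bd} via the quadratic-mean bound followed by Lemmas~\ref{lem:main_descent} and \ref{lem:Lagr_bds}(c) and the subadditivity of the square root. No gaps.
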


\begin{proof}
Using Lemma~\ref{lem:refine_props}(a), the fact that $\theta\in(0,1)$, and the triangle inequality, it holds that 
\begin{align*}
S_{j+1,k}^{(f)}=\frac{\sum_{i=j+1}^{k}\|p^{i}-(1-\theta)p^{i-1}\|}{\chi c(k-j)}\leq\frac{\sum_{i=j+1}^{k}(\|p^{i-1}\|+\|p^{i}\|)}{\chi c(k-j)}\leq\frac{\|p^{j}\|+2S_{j+1,k}^{(p)}}{\chi c},
\end{align*}
which is \eqref{eq:main_fk_bd}.
On the other hand, to show \eqref{eq:main_vk_bd}, we use
the definition of $S_{j+1,k}^{(v)}$, the fact that  $\sqrt{a+b}\leq \sqrt{a} + \sqrt{b}$ for $a,b\in \r_+$, Lemma~\ref{lem:main_descent},  and Lemma~\ref{lem:Lagr_bds}(c), to conclude that
\begin{align}
S_{j+1,k}^{(v)} & = \frac{\sum_{i=j+1}^{k}\|v^i\|}{k-j}
\leq
\left(\frac{\sum_{i=j+1}^{k}\|v^i\|^{2}}{k-j}\right)^{1/2} \nonumber \\
& \overset{\text{L.}\ref{lem:main_descent}}{\leq} \left(\frac{[\kappa_0^2 + \kappa_5 c][\Psi_j(c)-\Psi_k(c)]}{k-j}\right)^{1/2} \nonumber \\
& \overset{\text{L.}\ref{lem:Lagr_bds}(c)}{\leq} 
\sqrt{\frac{\kappa_0^2 + \kappa_5 c}{k-j}} \left[ \Delta_\phi + 4\left(\frac{\|p^{j}\|^{2}+\|p^{j-1}\|^{2}+\|p^{k}\|^{2}}{\chi^{2}c}\right) \right]^{1/2} \nonumber \\
&\leq 
2\sqrt{\frac{\kappa_0^2 + \kappa_5 c}{k-j}} \left( \Delta_\phi^{1/2} + \frac{\|p^{j}\|+\|p^{j-1}\|+\|p^{k}\|}{\chi\sqrt{c}} \right). \label{eq:Sv_bd}
\end{align}
\end{proof}
Observe that both residuals $S_{j+1,k}^{(v)}$ and $S_{j+1,k}^{(f)}$ depend on the size of the Lagrange multipliers
$p^j$, $p^{j-1}$, and $p^k$. 
If all the multipliers generated by Algorithm~\ref{alg:static_dp_admm} could be shown to be bounded independent of $c$ then it would be easy to
see that \eqref{eq:main_fk_bd}--\eqref{eq:main_vk_bd} with  $j=1$ and some $c= \Theta(\eta^{-1})$
would imply the existence of some
$k = O(\eta^{-1}\rho^{-2})$ such that
$[ S_{2,k}^{(v)}/ \rho ] + [ S_{2,k}^{(f)}/ \eta] \le 1$. 
Consequently, Algorithm~\ref{alg:static_dp_admm} would find a solution of Problem ${\cal S}_{\rho,\eta}$ in $O(\eta^{-1}\rho^{-2})$ iterations.

Unfortunately, we do not know how to bound $\{\|p_i\|\}$ independent of $c$, so we will instead show the existence
of $1\le j \le k$ such that (i) indices $j$ and $k-j$ are
$\Theta(\eta^{-1}\rho^{-2})$ 
and (ii) the three multipliers
$p^j$, $p^{j-1}$, and $p^k$ are bounded. This fact and
Proposition~\ref{prop:cross_mult_bd} suffice to show 
% the existence of the triple $(x^i, q^i, v^i)$ as in the previous paragraph.
that the last (hypothetical) conclusion in the previous
paragraph actually holds.

% Since both termination conditions in Algorithm~\ref{alg:static_dp_admm} require $\|v^{i}\|$, $\|f^{i}\|$, or some combination of the two to be sufficiently small, our goal for the next subsection is to bound the size of generated multipliers.

\subsection{Bounding the Lagrange Multipliers}

\label{subsec:Lagr_bd}

This subsection generalizes the analysis in \cite{kong2021aidal}. More specifically, Proposition~\ref{prop:nice_bd} shows that if  $k$ is sufficiently large relative to an index $j$, the penalty parameter $c$, and $\|p^0\|$, then $S_{j+1,k}^{(p)}={\cal O}(1)$. 

The proof of the first result can be found in \cite[Lemma~B.3]{sujanani2022adaptive} using the variable substitution $(q,q^{-}, \chi)=(q^{i}, [1-\theta] p^{i-1}, c)$ and step~4 of Algorithm~\ref{alg:static_dp_admm}.

\begin{lem}\label{lem:qbounds-2} 
For every $i\geq 1$ and $r\in \pt h(z^i) + A^* q^{i}$, it holds that 
\[
\|q^i\| \leq \max \left\{(1-\theta)\|p^{i-1}\|, \frac{2 D_{\dagger} (K_h + \|r\|)}{{d}_{\ddagger} \sigma^{+}_A} \right\}.
\]
\end{lem}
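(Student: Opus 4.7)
The plan is to invoke \cite[Lemma~B.3]{sujanani2022adaptive} with the substitution $(q, q^-, \chi) = (q^i, (1-\theta) p^{i-1}, c)$, so that step~4 of Algorithm~\ref{alg:static_dp_admm} --- namely, $q^i = (1-\theta) p^{i-1} + c(Ax^i - d)$ --- plays the role of the abstract multiplier update $q = q^- + \chi(Ax - d)$ used there. Since that reference result depends only on the inclusion $r \in \pt h(x^i) + A^* q^i$ and on the multiplier identity, not on how $x^i$ was produced, the substitution is immediate.

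For the reader who prefers a self-contained argument, here is the sketch I would write out. Pick $u := r - A^* q^i \in \pt h(x^i)$, and split on whether $\|q^i\| \leq (1-\theta)\|p^{i-1}\|$. In the affirmative case the first element of the max handles the claim trivially, so assume the opposite. Induction from $p^0 \in A(\rn)$, together with $Ax^i - d \in {\rm Im}(A)$ by~(A2), yields $q^i \in {\rm Im}(A)$; hence there exists $s \in \rn$ with $As = -q^i/\|q^i\|$ and $\|s\| \leq 1/\sigma_A^+$. The Slater condition~(A6) then ensures that $z := z_\dagger + d_\dagger \sigma_A^+ s$ lies in $\mathcal{H}$, and this $z$ satisfies $Az - d = -d_\dagger \sigma_A^+ q^i/\|q^i\|$ and $\|z - x^i\| \leq D_\dagger + d_\dagger \leq 2 D_\dagger$. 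Substituting $z$ into the subgradient inequality $h(z) - h(x^i) \geq \langle r - A^*q^i,\, z - x^i\rangle$ and invoking the Lipschitz assumption~(A3) gives, after rearrangement,
\[
d_\dagger \sigma_A^+ \|q^i\| \leq 2 D_\dagger (K_h + \|r\|) + \langle q^i,\, d - Ax^i\rangle.
\]
Finally, step~4 yields $\langle q^i,\, d - Ax^i\rangle = [(1-\theta)\langle q^i, p^{i-1}\rangle - \|q^i\|^2]/c$, which is non-positive under the case assumption by Cauchy--Schwarz; the desired bound follows.

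The main obstacle --- and the reason for the case split --- is the quadratic term $\|q^i\|^2/c$ hiding in $\langle q^i,\, d - Ax^i\rangle$: without the case assumption, this term would prevent the bound from being $c$-independent, and is exactly what forces the $\max$ structure in the statement. A secondary subtlety is that the construction of $s$ via a pseudo-inverse of $A$ requires $q^i \in {\rm Im}(A)$, which explains the initialization requirement $p^0 \in A(\rn)$ imposed by Algorithm~\ref{alg:static_dp_admm}; without this, one would have to handle the $\ker(A^*)$ component of $q^i$ separately and $\sigma_A^+$ would have to be replaced by a weaker constant.
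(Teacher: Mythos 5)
Your proposal is correct and matches the paper exactly: the paper's entire proof of this lemma is the citation to \cite[Lemma~B.3]{sujanani2022adaptive} with the substitution $(q,q^-,\chi)=(q^i,(1-\theta)p^{i-1},c)$ via step~4 of Algorithm~\ref{alg:static_dp_admm}, which is precisely your first paragraph. Your additional self-contained sketch (case split on $\|q^i\|$ vs.\ $(1-\theta)\|p^{i-1}\|$, the Slater point perturbation $z_\dagger + d_\dagger\sigma_A^+ s$, and the sign argument for $\langle q^i, d-Ax^i\rangle$) is also sound and goes beyond what the paper provides.
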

The next result presents some fundamental properties about $p^{i-1}$,
$p^{i}$, and ${q}^{i}$. 
\begin{lem}
\label{lem:tech_Lagr_inexact}
For every $1\leq j \leq k$, 
\begin{itemize}
\item[(a)] $p^{j}=\chi{q}^{j}+(1-\chi)(1-\theta)p^{j-1}$; 
\item[(b)] $\|p^{j}\|\leq \|p^0\| + \kappa_1 c$;
\item[(c)] it holds that 
\[
\frac{(1-\theta)\|p^{k}\|}{k-j}+\theta S_{j+1,k}^{(p)}
\leq 
\frac{(1-\theta)\|p^j\|}{k-j} + 
\frac{2\chi D_{\dagger} \left[K_{h}+G_{f} +S_{j+1,k}^{(v)}\right]}{d_{\dagger} \sigma_{A}^{+}},
\]
where $K_h$, $d_{\dagger}$, and $(D_{\dagger}, G_f)$ are as in (A3), (A6), and \eqref{eq:agg_defs}, respectively. 
\end{itemize}
\end{lem}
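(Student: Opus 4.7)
The plan is to handle the three parts in sequence, with (a) serving as the algebraic kernel that feeds into both (b) and (c).

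For part (a), I would directly compare the formulas for $q^{j}$ (from the line ${q}^{k}\gets(1-\theta)p^{k-1}+c(Ax^{k}-d)$) and $p^{j}$ (from Step~3). Writing $p^j - (1-\theta)p^{j-1} = \chi c f^j = \chi[q^j - (1-\theta)p^{j-1}]$ and rearranging gives the claimed convex-combination identity immediately.

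For part (b), I would unroll the recursion $p^i = (1-\theta)p^{i-1} + \chi c f^i$ to obtain $p^j = (1-\theta)^j p^0 + \chi c \sum_{i=1}^j (1-\theta)^{j-i} f^i$. Since $z_\dagger \in {\cal F}$ implies $f^i = A(x^i - z_\dagger)$, assumption~(A1) gives $\|f^i\| \leq \|A\|D_\dagger$; combined with $\sum_{i=1}^j (1-\theta)^{j-i} \leq 1/\theta$ and $(1-\theta)^j \leq 1$, we get $\|p^j\| \leq \|p^0\| + \chi c \|A\| D_\dagger / \theta = \|p^0\| + \kappa_1 c$.

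For part (c) — the main work — I would first apply Lemma~\ref{lem:qbounds-2} with the residual choice $r := v^i - \nabla f(x^i) \in \pt h(x^i) + A^* q^i$, which is valid by Lemma~\ref{lem:refine_props}(b). The norm bound $\|r\| \leq \|v^i\| + G_f$ plus the bound $\max\{a,b\} \leq a + b$ for nonnegatives yields $\|q^i\| \leq (1-\theta)\|p^{i-1}\| + 2D_\dagger(K_h + G_f + \|v^i\|)/(d_\dagger \sigma_A^+)$. Substituting this into part (a) and taking norms produces the clean one-step recursion
\begin{equation*}
\|p^i\| \leq (1-\theta)\|p^{i-1}\| + \beta_i, \qquad \beta_i := \frac{2\chi D_\dagger(K_h + G_f + \|v^i\|)}{d_\dagger \sigma_A^+}.
\end{equation*}

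The main obstacle, and the cleverness of the claim, lies in extracting a telescoping structure from this recursion when summed over $i=j+1,\ldots,k$. My plan is to sum both sides and split the sum on the right so that $\sum_{i=j+1}^k \|p^{i-1}\| = \|p^j\| + \sum_{i=j+1}^k \|p^i\| - \|p^k\|$. Rearranging collects coefficient $\theta = 1-(1-\theta)$ on $\sum_{i=j+1}^k \|p^i\|$ while leaving $(1-\theta)\|p^k\|$ on the left and $(1-\theta)\|p^j\|$ on the right. Dividing through by $k-j$ and recognizing $(k-j)^{-1}\sum_{i=j+1}^k \beta_i = 2\chi D_\dagger (K_h + G_f + S_{j+1,k}^{(v)})/(d_\dagger \sigma_A^+)$ yields exactly the inequality of (c). No further estimates are needed.
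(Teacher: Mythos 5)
Your proposal is correct and follows essentially the same route as the paper: part (a) by comparing the two update formulas, part (b) by unrolling the multiplier recursion and bounding the geometric sum by $1/\theta$, and part (c) by combining Lemma~\ref{lem:qbounds-2} (with $r=v^i-\nabla f(x^i)$, justified by Lemma~\ref{lem:refine_props}(b)) with part (a) to get the one-step recursion $\|p^i\|\le(1-\theta)\|p^{i-1}\|+\beta_i$, then summing and telescoping. The only cosmetic difference is that you spell out the index-shift in the summation explicitly, which the paper leaves as ``summing and dividing by $k-j$.''
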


\begin{proof}
(a) This is an immediate consequence of the updates for $p^{j}$ and
${q}^{j}$ in Algorithm~\ref{alg:static_dp_admm}.

(b) In view of Step~3 of Algorithm~\ref{alg:static_dp_admm}, the fact that $\theta\in(0,1)$,
and the triangle inequality, it holds that
\begin{align*}
\|p^{j}\| & \leq(1-\theta)\|p^{j-1}\|+\chi c\|f^j\| 
\leq (1-\theta)^{j}\|p^{0}\|+\chi c\sum_{i=0}^{j-1}(1-\theta)^{i}\|f^i\|\\
 & \leq\|p^{0}\|+\chi c \|A\| \sup_{z\in{\cal H}}\|z-z_{\dagger}\| \sum_{i=0}^{\infty}(1-\theta)^{i} \\
 & = \|p^{0}\|+ \frac{\chi c \|A\| D_{\dagger}}{\theta} = \|p^0\| + \kappa_1 c.
\end{align*}

(c) Let $i\geq1$ be fixed, define
\[
d_{\chi,\theta} := (1-\theta)(1-\chi),
\]
and recall that Lemma~\ref{lem:refine_props}(b) implies $v^{i}-\nabla f(x^{i})\in\partial h(x^{i})+A^{*}q^{i}$.
Using Lemma~\ref{lem:qbounds-2} with $r=v^{i}-\nabla f(x^{i})$, the
definition of $G_{f}$ in \eqref{eq:agg_defs}, and part (a), we first have that 
\begin{align*}
\|p^{i}\| & 
\overset{(a)}{=}\|\chi q^{i}+ d_{\chi,\theta} \cdot p^{i-1}\|
\leq 
\chi\|q^{i}\|+ d_{\chi,\theta} \|p^{i-1}\|\\
 & \overset{\text{L.\ref{lem:qbounds-2}}}{\leq} d_{\chi,\theta} \|p^{i-1}\|+\chi\max\left\{ (1-\theta)\|p^{i-1}\|,\frac{2D_{\dagger}(K_{h}+\|v^{i}-\nabla f(x^{i})\|)}{d_{\dagger}\sigma_{A}^{+}}\right\} \\
  & \leq (1-\theta)(1-\chi) \|p^{i-1}\| + \chi \left[(1-\theta) \|p^{i-1}\|+\frac{2D_{\dagger}(K_{h}+\|v^{i}-\nabla f(x^{i})\|)}{d_{\dagger}\sigma_{A}^{+}} \right]\\
 & \le (1-\theta)\|p^{i-1}\|+\frac{2\chi D_{\dagger}(K_{h}+\|\nabla f(x^{i})\|+\|v^{i}\|)}{d_{\dagger}\sigma_{A}^{+}}\\
 & \leq(1-\theta)\|p^{i-1}\|+\frac{2\chi D_{\dagger}(K_{h}+G_{f}+\|v^{i}\|)}{d_{\dagger}\sigma_{A}^{+}}.
\end{align*}
Summing the above inequality from $i=j+1$ to $k$ and dividing by
$k-j$ yields the desired conclusion.
\end{proof}

We are now ready to present the claimed bound on $S_{j+1,k}^{(p)}$. 

\begin{prop}
\label{prop:nice_bd} Let ${\cal R} \geq 0$ and $\underline{c}>0$ be given and suppose $c$ and $p^0$ satisfy \eqref{eq:R_cbar_bd}. Then, for any positive integers $j$ and $k$ such that $k -j \geq \kappa_6 + \xi_{\cal R}^{(0)} c + \xi_{\cal R}^{(1)}c^2$, we have
\[
S_{j+1,k}^{(p)} \leq \kappa_2,
\]
where $(\kappa_2, \kappa_6)$ and $(\xi_{\cal R}^{(0)}, \xi_{\cal R}^{(1)})$ are as in \eqref{eq:pp_indep_defs} and \eqref{eq:xi_defs}, respectively.
\end{prop}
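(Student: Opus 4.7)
The plan is to combine Lemma~\ref{lem:tech_Lagr_inexact}(b), Lemma~\ref{lem:tech_Lagr_inexact}(c), and the bound on $S_{j+1,k}^{(v)}$ from Proposition~\ref{prop:cross_mult_bd}. The starting point is Lemma~\ref{lem:tech_Lagr_inexact}(c): after dividing by $\theta$ and discarding the nonnegative term $(1-\theta)\|p^k\|/[\theta(k-j)]$ on the LHS, one obtains
\[
S_{j+1,k}^{(p)} \;\le\; \frac{(1-\theta)\|p^j\|}{\theta(k-j)} \;+\; \frac{2\chi D_{\dagger}(K_h+G_f)}{\theta d_{\dagger}\sigma_A^+} \;+\; \frac{2\chi D_{\dagger}\,S_{j+1,k}^{(v)}}{\theta d_{\dagger}\sigma_A^+}.
\]
Since $\theta\le 1$, the middle term is dominated by the $\frac{1}{\theta}\cdot\frac{2\chi D_{\dagger}(K_h+G_f)}{\theta d_{\dagger}\sigma_A^+}$ piece of $\kappa_2$, so it suffices to show that under the hypothesis on $k-j$ the first and third terms together are bounded by $\frac{1}{\theta}+1$.

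For the first term I would apply Lemma~\ref{lem:tech_Lagr_inexact}(b) together with $\|p^0\|\le c{\cal R}$ to obtain $\|p^j\|\le c({\cal R}+\kappa_1)$; the $(1-\theta)({\cal R}+\kappa_1)\,c$ summand inside $\xi_{{\cal R}}^{(0)}$ is precisely what forces this term to be at most $1$. For the third term I would plug in the bound from Proposition~\ref{prop:cross_mult_bd}, use $\|p^{j-1}\|,\|p^j\|,\|p^k\|\le c({\cal R}+\kappa_1)$, and invoke the identity $1/\kappa_4 = \chi D_{\dagger}/(\theta d_{\dagger}\sigma_A^+)$ to get an expression of the form $\frac{\mathrm{const}}{\kappa_4}\sqrt{(\kappa_0^2+\kappa_5 c)/(k-j)}\,\bigl(\Delta_{\phi}^{1/2}+({\cal R}+\kappa_1)\sqrt{c}/\chi\bigr)$. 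Squaring this (to kill the square root) and applying $(a+b)^2\le 2a^2+2b^2$ splits the numerator into a constant piece of order $\kappa_0^2\Delta_{\phi}/\kappa_4^2$, a piece of order $[\kappa_5\Delta_{\phi}+\kappa_0^2({\cal R}+\kappa_1)^2/\chi^2]/\kappa_4^2\cdot c$, and a piece of order $\kappa_5({\cal R}+\kappa_1)^2/(\chi^2\kappa_4^2)\cdot c^2$. These three pieces are mirrored exactly by the ingredients of $\kappa_6$, $\xi_{{\cal R}}^{(0)}$, and $\xi_{{\cal R}}^{(1)}$, so dividing by $k-j\ge \kappa_6+\xi_{{\cal R}}^{(0)}c+\xi_{{\cal R}}^{(1)}c^2$ makes the squared third term at most $1$.

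The main obstacle is pure bookkeeping: I must verify that the universal numerical constants baked into $\kappa_6$, $\xi_{{\cal R}}^{(0)}$, and $\xi_{{\cal R}}^{(1)}$ really do dominate those produced by the squaring plus the splitting via $(a+b)^2\le 2a^2+2b^2$, and that the $(1-\theta)({\cal R}+\kappa_1)$ summand in $\xi_{{\cal R}}^{(0)}$ is charged to the first term rather than the third. A secondary subtlety is that $\|p^k\|$ appears inside the Proposition~\ref{prop:cross_mult_bd} bound and must be controlled through Lemma~\ref{lem:tech_Lagr_inexact}(b); attempting instead to reabsorb it into the dropped LHS of Lemma~\ref{lem:tech_Lagr_inexact}(c) would yield a $k$-dependent, much weaker inequality and would not close the argument.
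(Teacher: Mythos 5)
Your proposal is correct and follows essentially the same route as the paper: start from Lemma~\ref{lem:tech_Lagr_inexact}(c) with the $\|p^k\|$ term dropped, control $\|p^j\|$, $\|p^{j-1}\|$, $\|p^k\|$ via Lemma~\ref{lem:tech_Lagr_inexact}(b) and \eqref{eq:R_cbar_bd}, feed the resulting bound into the $S_{j+1,k}^{(v)}$ estimate of Proposition~\ref{prop:cross_mult_bd}, and observe that the constants $\kappa_6$, $\xi_{\cal R}^{(0)}$, $\xi_{\cal R}^{(1)}$ are built exactly so that the three resulting pieces are each at most $1$ (or $1/\theta$) once $k-j$ exceeds the stated threshold. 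Your remark about charging the $(1-\theta)({\cal R}+\kappa_1)$ summand of $\xi_{\cal R}^{(0)}$ to the $\|p^j\|/(k-j)$ term, and about bounding $\|p^k\|$ through Lemma~\ref{lem:tech_Lagr_inexact}(b) rather than the discarded left-hand side, matches the paper's accounting precisely.
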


\begin{proof} 
Using \eqref{eq:R_cbar_bd}, \eqref{eq:main_vk_bd}, Lemma~\ref{lem:tech_Lagr_inexact}(b), and the relation $\sqrt{a}+\sqrt{b} \leq \sqrt{2(a+b)}$ for $a,b\in \r_{+}$, we first have that
\begin{align*}
S_{j+1,k}^{(v)} & \leq 2\sqrt{\frac{\kappa_0^2 + \kappa_5 c}{k-j}} \left( \Delta_\phi^{1/2} + \frac{\|p^{j}\|+\|p^{j-1}\|+\|p^{k}\|}{\chi\sqrt{c}} \right) \\
& \leq \sqrt{\frac{4 (\kappa_0^2 + \kappa_5 c)}{k-j}} \left( \Delta_\phi^{1/2} + \frac{3[\|p^{0}\|+\kappa_{1}c]}{\chi\sqrt{c}} \right) \\
& \leq \sqrt{\frac{4 (\kappa_0^2 + \kappa_5 c)}{k-j}} \left( \Delta_\phi^{1/2} + \frac{3[{\cal R}+\kappa_{1}]\sqrt{c}}{\chi} \right) \\ 
& \le
\sqrt{\frac{8 (\kappa_0^2 + \kappa_5 c)}{k-j} \left( \Delta_\phi + \frac{9[{\cal R}+\kappa_{1}]^2 c} {\chi^2} \right) } 
\leq  \kappa_4 \sqrt{\frac{\xi_{\cal R}^{(0)}c + \xi_{\cal R}^{(1)}c^2}{k-j}}.
\end{align*}
Using the above bound, Lemma~\ref{lem:tech_Lagr_inexact}(b)--(c), our assumed bound on $k-j$, and the definition of $\kappa_2$, we conclude that
\begin{align*}
S_{j+1,k}^{(p)} & 
\leq \frac{2\chi D_{\dagger}(K_{h}+G_{f})}{\theta d_{\dagger}\sigma_{A}^{+}} + \frac{(1-\theta)\|p^{j}\|}{\theta(k-j)}+ \frac{S_{j+1,k}^{(v)}}{\kappa_4}  \\
 & \leq \frac{2\chi D_{\dagger}(K_{h}+G_{f})}{\theta d_{\dagger}\sigma_{A}^{+}} + \frac{(1-\theta)(\|p^{0}\|+\kappa_{1}c)}{\theta(k-j)}+ \sqrt{\frac{\kappa_6 + \xi_{\cal R}^{(0)} c + \xi_{\cal R}^{(1)} c^2}{k-j}}\\
& \leq \frac{2\chi D_{\dagger}(K_{h}+G_{f})}{\theta d_{\dagger}\sigma_{A}^{+}} + \frac{(1-\theta)({\cal R} + \kappa_1)c}{\theta(k-j)}+\sqrt{\frac{\kappa_6 + \xi_{\cal R}^{(0)} c + \xi_{\cal R}^{(1)} c^2}{k-j}}\\
& \leq  \frac{2\chi D_{\dagger}(K_{h}+G_{f})}{\theta d_{\dagger}\sigma_{A}^{+}} + \frac{\xi_{\cal R}^{(0)} c}{\theta(k-j)} + \sqrt{\frac{\kappa_6 + \xi_{\cal R}^{(0)} c + \xi_{\cal R}^{(1)} c^2}{k-j}} \\
& \leq \frac{1}{\theta}\left[1 + \frac{2\chi D_{\dagger}(K_{h}+G_{f})}{\theta d_{\dagger}\sigma_{A}^{+}}\right] + 1 = \kappa_2.
\end{align*}
\end{proof}

We end this subsection by discussing some implications of the above results.
Suppose $\zeta$ is an integer satisfying $\zeta \geq \kappa_6 + \xi_{\cal R}^{(0)}c + \xi_{\cal R}^{(1)}c^2 = \Theta(c^2)$.
It then follows from Proposition~\ref{prop:nice_bd}
that $S_{2,\zeta}^{(p)} = {\cal O}(1)$ and $S_{2\zeta,3\zeta}^{(p)} = {\cal O}(1)$. Since the minimum
of a set of scalars minorizes its average, there exist indices 
$j_0\in\{2,\ldots,\zeta\}$ and $k_0\in\{2\zeta, \ldots, 3\zeta\}$ such that $\|p^{j_0}\|={\cal O}(1)$
and $\|p^{k_0}\|={\cal O}(1)$. Using the fact that $k_0 - j_0 \geq \zeta$, 
the above bounds, and \eqref{eq:main_fk_bd}--\eqref{eq:main_vk_bd} with $(j,k)=(j_0,k_0)$, it is reasonable to expect that
$S_{j_0+1,k_0}^{(f)}={\cal O}(1/c)$ and $S_{j_0+1,k_0}^{(v)}={\cal O}(\sqrt{c/\zeta})$.
In the next section, we give the exact steps of this argument and use the resulting bounds to
prove Proposition~\ref{prop:cycle_props}.

\subsection{Proof of Proposition~\ref{prop:cycle_props}}

\label{sec:prop_prf}

Before presenting the proof
of Proposition~\ref{prop:cycle_props}, we first give two technical results. The first one refines the bounds in Proposition~\ref{prop:cross_mult_bd} using Proposition~\ref{prop:nice_bd}, while the second one gives an important implication of \eqref{eq:penalty_thresh}. 

\begin{lem}
\label{lem:spec_term_bd} Let ${\cal R} \geq 0$ and $\underline{c}>0$ be given and suppose $(c,p^0)$ satisfies \eqref{eq:R_cbar_bd} for some $\cal R\geq 0$ and $\underline{c}>0$. For any integer $\zeta$ such that $\zeta \geq \kappa_6 + \xi_{\cal R}^{(0)} c + \xi_{\cal R}^{(1)}c^2$, there exist $j \in \{3,\ldots, \zeta\}$ and $k \in \{2\zeta+1, \ldots, 3 \zeta\}$ satisfying
\begin{align}
S_{j+1,k}^{(v)}
\leq
\tilde{\kappa}_{\underline{c}}^{(0)} \sqrt{\frac{\kappa_0^2 + \kappa_5 c}{k - j}},
\quad
 S_{j+1,k}^{(f)}
\leq
\frac{6\kappa_{2}}{\chi c}, \label{eq:refined_S_bds}
\end{align}
where $(\kappa_0, \kappa_2, \kappa_5)$ and $\tilde{\kappa}_0$ is are as in \eqref{eq:pp_indep_defs} and \eqref{eq:tilde_kappa_defs}, respectively.
\end{lem}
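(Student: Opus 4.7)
The plan is to execute the heuristic sketch given at the end of Subsection~\ref{subsec:Lagr_bd}: (i) apply Proposition~\ref{prop:nice_bd} to two disjoint blocks of indices so as to bound the ergodic mean $S^{(p)}$ on each block; (ii) extract from these averages specific indices $j$ and $k$ at which the norms $\|p^{j-1}\|$, $\|p^j\|$, and $\|p^k\|$ are individually ${\cal O}(\kappa_2)$, hence independent of $c$; and (iii) plug these indices into the generic bounds \eqref{eq:main_fk_bd}--\eqref{eq:main_vk_bd} of Proposition~\ref{prop:cross_mult_bd}.

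First, since $\zeta \ge \kappa_6 + \xi_{\cal R}^{(0)} c + \xi_{\cal R}^{(1)} c^2$, I will apply Proposition~\ref{prop:nice_bd} to the block $\{2\zeta+1,\ldots,3\zeta\}$ to obtain $S_{2\zeta+1,3\zeta}^{(p)} \le \kappa_2$ and, similarly (up to an $O(1)$ index shift that is absorbed by $\kappa_6 \ge 3$), to the block $\{3,\ldots,\zeta\}$ to obtain $S_{3,\zeta}^{(p)} \le \kappa_2$. By averaging, some index $k \in \{2\zeta+1,\ldots,3\zeta\}$ satisfies $\|p^k\| \le \kappa_2$. To simultaneously control the two consecutive multipliers $\|p^{j-1}\|$ and $\|p^j\|$, I will use the telescoping estimate
\begin{equation*}
\sum_{j=4}^{\zeta}\bigl(\|p^{j-1}\| + \|p^j\|\bigr) \;\le\; 2\sum_{i=3}^{\zeta}\|p^i\| \;\le\; 2\kappa_2(\zeta - 2),
\end{equation*}
which, again by averaging, yields some $j \in \{4,\ldots,\zeta\}\subset\{3,\ldots,\zeta\}$ with $\|p^{j-1}\|+\|p^j\| \le 4\kappa_2$. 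Combining these two selections gives $\|p^{j-1}\|+\|p^j\|+\|p^k\| \le 5\kappa_2$ and, in particular, $\|p^j\| \le 4\kappa_2$.

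For these indices $j$ and $k$ we have $k - j \ge \zeta + 1 \ge \kappa_6 + \xi_{\cal R}^{(0)} c + \xi_{\cal R}^{(1)} c^2$, so a further application of Proposition~\ref{prop:nice_bd} gives $S_{j+1,k}^{(p)} \le \kappa_2$. Plugging this together with $\|p^j\|\le 4\kappa_2$ into \eqref{eq:main_fk_bd} yields
\begin{equation*}
S_{j+1,k}^{(f)} \;\le\; \frac{\|p^j\| + 2S_{j+1,k}^{(p)}}{\chi c} \;\le\; \frac{6\kappa_2}{\chi c},
\end{equation*}
which is the second bound in \eqref{eq:refined_S_bds}. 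For the first bound, substituting $\|p^{j-1}\|+\|p^j\|+\|p^k\| \le 5\kappa_2$ into \eqref{eq:main_vk_bd} and using $c \ge \underline{c}$ to replace $1/\sqrt{c}$ by $1/\sqrt{\underline{c}}$ produces exactly the stated constant $\tilde\kappa_{\underline c}^{(0)} = 2(\sqrt{\Delta_\phi} + 5\kappa_2/(\chi\sqrt{\underline c}))$. The only mildly delicate step is the telescoping identity used to extract two consecutive multipliers with small total norm; everything else is a straightforward bookkeeping combination of Propositions~\ref{prop:cross_mult_bd} and \ref{prop:nice_bd}.
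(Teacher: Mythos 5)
Your proposal is correct and follows essentially the same route as the paper's proof: apply Proposition~\ref{prop:nice_bd} to the two index blocks $\{1,\ldots,\zeta\}$ and $\{2\zeta,\ldots,3\zeta\}$, use the min-versus-average argument (with the telescoping sum for the consecutive pair $\|p^{j-1}\|+\|p^j\|\le 4\kappa_2$ and $\|p^k\|\le\kappa_2$), and then substitute into \eqref{eq:main_fk_bd}--\eqref{eq:main_vk_bd} together with $S_{j+1,k}^{(p)}\le\kappa_2$ and $c\ge\underline{c}$. The only differences are cosmetic index shifts in the averaging step, which carry the same harmless off-by-a-constant slack already present in the paper's own argument.
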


\begin{proof}
Suppose $\zeta\in \mathbb{N}$ satisfies $\zeta \geq \kappa_6 + \xi_{\cal R}^{(0)} c + \xi_{\cal R}^{(1)}c^2$. Using  Proposition~\ref{prop:nice_bd}
with $(j,k)=(1,\zeta)$ it holds that there exists $3\leq j\le \zeta$
such that 
\begin{align}
\|p^{j-1}\|+\|p^{j}\| & 
\leq
\frac{\sum_{i=3}^{\zeta}(\|p^{i-1}\|+\|p^{i}\|)}{\zeta-2} \nonumber \leq
\frac{2\sum_{i=2}^{\zeta}\|p^{i}\|}{\zeta-2} \\
& =
 \frac{2(\zeta-1)S_{2,\zeta}^{(p)}}{\zeta-2}\leq4S_{2,\zeta}^{(p)}\leq 4 \kappa_2.
 \label{eq:pkl_bd}
\end{align}
On the other hand, using Proposition~\ref{prop:nice_bd} with $(j,k)=(2\zeta,3\zeta)$ it
holds that there exists $k\in \{2\zeta +1,\ldots,3\zeta\}$ such that 
\begin{equation}
\|p^{k}\|\leq\frac{\sum_{i=2\zeta+1}^{3\zeta}\|p^{i}\|}{\zeta}=S_{2\zeta+1,3\zeta}
\leq 
\kappa_2 .\label{eq:pku_bd}
\end{equation}
Combining \eqref{eq:pkl_bd}, \eqref{eq:pku_bd}, and Proposition~\ref{prop:cross_mult_bd}, it
follows that 
\begin{align*}
S_{j+1,k}^{(v)} & 
\leq 2\sqrt{\frac{\kappa_0^2 + \kappa_5 c}{k - j}} \left(\Delta_{\phi}^{1/2}+\frac{\|p^{j_{0}}\|+\|p^{j_{0}-1}\|+\|p^{k_{0}}\|}{\chi\sqrt{c}}\right) \\
& \overset{\eqref{eq:pkl_bd}\text{--}\eqref{eq:pku_bd}}{\leq} 2\sqrt{\frac{\kappa_0^2 + \kappa_5 c}{k - j}} \left(\Delta_{\phi}^{1/2}+\frac{5 \kappa_2}{\chi\sqrt{c}}\right) \\
& \leq 
2{\sqrt{\frac{\kappa_0^2 + \kappa_5 c}{k - j}}}\left(\Delta_{\phi}^{1/2}+\frac{5\kappa_2}{\chi\sqrt{\underline{c}}}\right)
=
\tilde{\kappa}_{\underline{c}}^{(0)} \sqrt{\frac{\kappa_0^2 + \kappa_5 c}{k - j}},
\end{align*}
which is the first bound in \eqref{eq:refined_S_bds}. To show the other bound in \eqref{eq:refined_S_bds}, we use \eqref{eq:pkl_bd} and Proposition~\ref{prop:nice_bd} to conclude that 
\[
S_{j+1,k}^{(f)}\leq\frac{\|p^{j}\|+2S_{j+1,k}^{(p)}}{\chi c} \leq
\frac{6\kappa_2}{\chi c}.
\]
\end{proof}

We now state a technical result which will be used in the proof
of
Proposition~\ref{prop:cycle_props}(c).

\begin{lem} \label{lem:penalty_tech_res} 
For any ${\cal R} \geq 0$ and $c \ge \underline{c}>0$, the following statements hold:
% be given and suppose $(c,p^0)$ satisfies \eqref{eq:R_cbar_bd}. Then:
\begin{itemize}
    \item[(a)] the quantity  ${\cal T}_c(\cdot,\cdot\,|\,\cdot,\cdot)$ 
    defined in \eqref{eq:T_static_def}
    satisfies
    \[
    {\cal T}_c(\rho,\eta\,|\,\underline{c},{\cal R}) \leq \left[\left(\frac{c}{\underline{c}}\right)^2 + \frac{c}{\underline{c} \cdot \min\{\rho^2,\eta^2\}}\right] {\cal T}_{\underline{c}}(1,1\,|\,\underline{c},{\cal R});
    \]
    % where ${\cal T}_c(\cdot,\cdot\,|\,\cdot,\cdot)$ is as in \eqref{eq:T_static_def};
    \item[(b)] if $c$ satisfies \eqref{eq:penalty_thresh}, then 
${\cal T}_c(\rho,\eta\,|\,\underline{c},{\cal R}) \leq c^3$.
\end{itemize}
\end{lem}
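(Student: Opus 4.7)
The plan is to establish (a) by a direct coefficient-wise comparison in the polynomial expansion of ${\cal T}_c(\rho,\eta\,|\,\underline{c},{\cal R})$ from \eqref{eq:T_static_def}, and then obtain (b) by substituting (a) into ${\cal T}_c(\rho,\eta\,|\,\underline{c},{\cal R}) \leq c^3$ and reducing the resulting condition on $c$ to a quadratic inequality.

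For (a), set $\mu := \min\{\rho^2,\eta^2\}$. The quantity ${\cal T}_c(\rho,\eta\,|\,\underline{c},{\cal R})$ is a degree-two polynomial in $c$ whose constant, linear, and leading coefficients are $48(\kappa_6 + \tilde\kappa_{\underline{c}}^{(1)}/\rho^2)$, $48(\xi_{\cal R}^{(0)} + \kappa_3/\eta^2 + \tilde\kappa_{\underline{c}}^{(2)}/\rho^2)$, and $48\xi_{\cal R}^{(1)}$, respectively; the expression ${\cal T}_{\underline{c}}(1,1\,|\,\underline{c},{\cal R})$ has the same structure with $\rho=\eta=1$ and $c=\underline{c}$. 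Using the hypothesis $c \geq \underline{c}$ together with $1/\rho^2, 1/\eta^2 \leq 1/\mu$, I would then verify each of the six required term-by-term bounds: the $(\rho,\eta)$-independent summands (those involving $\kappa_6$, $\xi_{\cal R}^{(0)}$, $\xi_{\cal R}^{(1)}$) are absorbed into the $(c/\underline{c})^2$ factor appearing on the right-hand side, while the $(\rho,\eta)$-dependent summands (those involving $\tilde\kappa_{\underline{c}}^{(1)}$, $\kappa_3$, $\tilde\kappa_{\underline{c}}^{(2)}$) are absorbed into the $c/(\underline{c}\mu)$ factor. Summing the six bounds gives (a).

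For (b), let $T := {\cal T}_{\underline{c}}(1,1\,|\,\underline{c},{\cal R})$ and $\varepsilon := \min\{\rho,\eta\}$, so that $\mu = \varepsilon^2$. By (a), the target ${\cal T}_c(\rho,\eta\,|\,\underline{c},{\cal R}) \leq c^3$ follows from $(c/\underline{c})^2 T + cT/(\underline{c}\varepsilon^2) \leq c^3$, which, after dividing through by $c$, becomes the quadratic inequality $c^2 \geq (T/\underline{c}^2)\,c + T/(\underline{c}\varepsilon^2)$. The threshold $\hat{c}(\rho,\eta\,|\,\underline{c},{\cal R}) = T/\underline{c}^2 + \sqrt{T/\underline{c}}/\varepsilon$ from \eqref{eq:penalty_thresh} is exactly an upper bound (via $\sqrt{a+b} \leq \sqrt{a} + \sqrt{b}$) on the positive root of the quadratic $c^2 - (T/\underline{c}^2)\,c - T/(\underline{c}\varepsilon^2) = 0$. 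Consequently, the hypothesis $c \geq \hat{c}$ yields both $c - T/\underline{c}^2 \geq \sqrt{T/\underline{c}}/\varepsilon$ and $c \geq \sqrt{T/\underline{c}}/\varepsilon$; multiplying these two inequalities gives $c(c - T/\underline{c}^2) \geq T/(\underline{c}\varepsilon^2)$, which is precisely the required bound.

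The only real obstacle is bookkeeping: in (a), one must keep careful track of which of the six summands in the expansion of ${\cal T}_c$ is to be bounded using the $(c/\underline{c})^2$ factor versus the $c/(\underline{c}\mu)$ factor, but each individual comparison is a one-step inequality. Once (a) is in hand, (b) is a routine reduction, and the somewhat elaborate-looking formula for $\hat{c}$ in \eqref{eq:penalty_thresh} becomes transparent when viewed as the $\sqrt{a+b} \leq \sqrt{a}+\sqrt{b}$ simplification of the quadratic-formula root arising at the end of (b).
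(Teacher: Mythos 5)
Your proof is correct and follows essentially the same route as the paper: part (a) is the same term-by-term comparison using $c\ge\underline{c}$ and $\min\{\rho^2,\eta^2\}\le\rho^2,\eta^2$, and part (b) reduces to the same quadratic inequality $c^2\ge (T/\underline{c}^2)c+T/(\underline{c}\varepsilon^2)$. Your way of closing part (b) --- multiplying the two inequalities $c-T/\underline{c}^2\ge\sqrt{T/\underline{c}}/\varepsilon$ and $c\ge\sqrt{T/\underline{c}}/\varepsilon$ --- is a slightly cleaner verification than the paper's comparison of $\hat{c}$ with the explicit positive root via $\sqrt{a+b}\le\sqrt{a}+\sqrt{b}$, but it is the same idea.
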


\begin{proof}

(a) This statement follows immediately from the definition of ${\cal T}_c(\cdot,\cdot\,|\,\cdot,\cdot)$ and
the fact that for any $c\geq \bar{c}$
any nonnegative scalars $\alpha$, $\beta$, and $\gamma$, we have
\[
\alpha + \beta c \leq (\alpha + \beta \underline{c}) \left( \frac{c}{\underline{c}} \right), \quad
\alpha + \beta c + \gamma c^2 \leq (\alpha + \beta \underline{c} + \gamma \underline{c}^2) 
\left(\frac{c}{\underline{c}}\right)^2.
\]
% for any nonnegative scalars $\alpha$, $\beta$, and $\gamma$.

(b)  Define  $\hat{c} := \hat{c}(\rho,\eta\,|\,\underline{c},{\cal R})$, $\varepsilon := \min\{\rho,\eta\}$, and $T := {\cal T}_{\underline{c}}(1,1\,|\,\underline{c},{\cal R})$,
and assume that $c$ satisfies \eqref{eq:penalty_thresh}, or equivalently,
$c \ge \hat c$. To show the conclusion of (b), it suffices to show that 
\begin{equation}
\left[\left(\frac{c}{\underline{c}}\right)^2 + \frac{c}{\underline{c} \cdot \varepsilon^2}\right] T  \leq c^3. \label{eq:suff_penalty_thresh}    
\end{equation}
in view of part (a).
It is easy to see that the above inequality is satisfied by any $c$ such that
% Since $c > 0$, the above inequality is a quadratic inequality in ${c}$ with a positive root at 
\[
c \ge \pi_{\varepsilon} := \frac{ T/\underline{c}^2  + \sqrt{T^2/\underline{c}^4 + 4T /(\varepsilon^2\underline{c})}}{2}.
\] 
% where it holds at equality. 
Since the definition of
$\hat{c}$ in \eqref{eq:penalty_thresh} and 
the relation $\sqrt{a+b} \leq \sqrt{a}+\sqrt{b}$ for $a,b\in\r_+$
imply that
$\hat{c} \geq \pi_\varepsilon$,
the conclusion of (b) follows from 
the assumption
that $c\geq \hat{c}$ and
the
previous observation.
\end{proof}

We now remark on
Lemma~\ref{lem:spec_term_bd}. For any integer
$\zeta \ge \kappa_6 + \xi_{\cal R}^{(0)} c + \xi_{\cal R}^{(1)}c^2$, it follows that
there exist $i_1,i_2 \le 3 \zeta$
such that $\|v_{i_1}\| = {\cal O}(\sqrt{c/\zeta)})$ and
$\|f_{i_2}\| = {\cal O}(1/c)$.
Hence, for some 
$c=\Theta(\eta^{-1})$ and some
$\zeta \ge \Omega(\rho^{-2}\eta^{-1})$,
we can guarantee that
$\|v_{i_1}\| \le \rho $ and
$\|f_{i_2}\| \le \eta$. Clearly,
if $i_1=i_2$ then this argument
shows that a solution of Problem~${\cal S}_{\rho,\eta}$ can be
found in ${\cal O}(\rho^{-2}\eta^{-1})$ iterations of Algorithm~\ref{alg:static_dp_admm}.
In the proof (of Proposition~\ref{prop:cycle_props}) below, we give a more involved argument that guarantees that the above
$i_1$ and $i_2$ can be chosen so
that $i_1=i_2$.

\begin{proof}[Proof of Proposition~\ref{prop:cycle_props}]
(a) Let $(\rho, \eta)\in \r_{++}^2$, $p^0\in A(\rn)$, and $c>0$ be given, and define
\[
T := {\cal T}_c(\rho,\eta\,|\,\underline{c},{\cal R}), \quad r_j := \frac{{\cal S}_{j}^{(v)}}{\rho} + \frac{{\cal S}_{j}^{(f)}}{\eta} \sqrt{\frac{c^3}{j}} \quad  \forall j\geq 1,
\]
where ${\cal S}_j^{(v)}$ and ${\cal S}_j^{(f)}$ are as in Step~2b of Algorithm~\ref{alg:static_dp_admm} and ${\cal T}_c(\cdot,\cdot\,|\,\cdot,\cdot)$ is as in \eqref{eq:T_static_def}. 
For the sake of contradiction, suppose
that Algorithm~\ref{alg:static_dp_admm} has not terminated by the end of iteration $k = T$.
Since Algorithm~\ref{alg:static_dp_admm} (see its Step~2b) terminates unsuccessfully at iteration $k$ exactly when $r_k \leq 1$, we will obtain the
desired contradiction by showing that
 there exists $k\leq T$ such that $r_k \leq 1$.

First, consider an arbitrary
pair of integers $j$ and $k$ 
such that $1 \le j \le k \le T$ and
assume without loss of generality that $k$ is even.
% Let $j_0$ and $k_0$ be positive integers less than $T$. Moreover, without loss of generality, suppose that $k_0$ is even and $j_0 \leq k_0$. 
Then, 
combining \eqref{eq:tech_main_prop_bd}, the relations $S_{k/2,k}^{(v)} = {\cal S}_{k}^{(v)}$ and $S_{k/2,k}^{(f)} = {\cal S}_{k}^{(f)}$, we easily see that
\begin{align}
r_{k} & =\frac{S_{k/2,k}^{(v)}}{\rho}+\frac{c^{3/2}  S_{k/2,k}^{(f)}}{\eta\sqrt{k}}
 =
\frac{k-j+1}{k-k/2+1}\left[\frac{S_{j,k}^{(v)}}{\rho}+\frac{c^{3/2} 
S_{j,k}^{(f)}}{\eta\sqrt{k}}\right] \nonumber \\
& \leq
\frac{k+2}{k/2+1}\left[\frac{S_{j,k}^{(v)}}{\rho}+\frac{c^{3/2} 
S_{j,k}^{(f)}}{\eta\sqrt{k}}\right] 
=
2\left[\frac{S_{j,k}^{(v)}}{\rho}+\frac{c^{3/2} 
S_{j,k}^{(f)}}{\eta\sqrt{k}}\right],
\label{eq:rk0_bd}
\end{align}
We now show that there exists suitable $j$ and $k$ so that the last expression is bounded by 1 and 
hence that our desired contradiction follows.
Note first that the definition of $T={\cal T}_c(\rho,\eta)$ in
\eqref{eq:T_static_def} implies that 
$\zeta := T/3$ satisfies
the assumption of 
Lemma~\ref{lem:spec_term_bd}. Hence,
the conclusion of this lemma implies
the existence of $j\in\{3, \ldots, T/3\}$ and $k \in \{2T/3 + 1, \ldots, T \}$ such that
\begin{align}
\frac{S_{j,k}^{(v)}}{\rho} + 
\frac{c^{3/2} S_{j,k}^{(f)}}{\eta\sqrt{k}} & 
\leq
\frac{\tilde{\kappa}_{\underline{c}}^{(0)} \sqrt{\kappa_0^2 + \kappa_5 c}}{\rho\sqrt{k - j}}+
\frac{6\kappa_{2}\sqrt{c}}{\chi\eta\sqrt{k}} 
\leq
\frac{\tilde{\kappa}_{\underline{c}}^{(0)} \sqrt{\kappa_0^2 + \kappa_5 c}}{\rho\sqrt{T/3}}+
\frac{6\kappa_{2}\sqrt{c}}{\chi\eta\sqrt{T/3}} \nonumber\\
& =\sqrt{\frac{\tilde{\kappa}_{1}+\tilde{\kappa}_2 c}{\rho^{2}T}}+
\sqrt{\frac{\kappa_{3} c}{\eta^{2}T}} 
\leq\frac{1}{4}+\frac{1}{4}=\frac{1}{2}, \label{eq:tech_main_prop_bd}
\end{align}
where the last inequality follows from the definition of $T$.
Combining \eqref{eq:rk0_bd} and \eqref{eq:tech_main_prop_bd} we conclude that $r_k\le 1$, which yields our desired contradiction.

(b) This follows immediately from the stopping condition in Step~2a of Algorithm~\ref{alg:static_dp_admm}
and Lemma~\ref{lem:refine_props}(b).

(c) Let $(T,r_k)$ be as in part (a) and assume that $c$ satisfies \eqref{eq:penalty_thresh}.  Assume, for contradiction, that
Algorithm~\ref{alg:static_dp_admm} does not terminate successfully. Then, by part (a), 
the algorithm terminates in an iteration $k \leq T$ such that $r_{k} \leq 1$. Using the fact that $r_k$ itself is an average of scalars, there exists $k/2 \leq i\leq k$ such that 
\[
\frac{\|v^i\|}{\rho} + \frac{c^{3/2} \|f^i\|}{\eta \sqrt{k}} 
\leq 
\frac{{S}_{k/2,k}^{(v)}}{\rho} + \frac{c^{3/2} {S}_{k/2,k}^{(f)}}{\eta \sqrt{k}} 
\leq 1.
\]
Hence, it holds that $\|v^i\|\leq\rho$
and $\|f^i\| \leq \eta\sqrt{k} c^{-3/2} \leq  \eta\sqrt{T} c^{-3/2}$ where the last inequality is due to the fact that $k \le T$. 
Moreover, the assumption that $c$
satisfies \eqref{eq:penalty_thresh} together with Lemma~\ref{lem:penalty_tech_res}(b) then imply that $T\leq c^3$ and, hence, that $\|f^i\| \leq \eta$. Consequently, this means that the algorithm actually terminates
successfully at iteration $i \le k$.
We have thus established the desired contradiction and, hence, that
part (c) holds.
% Since $i\leq k \leq T$, 
% it follows from part (a) that  Algorithm~\ref{alg:static_dp_admm} terminates successfully in Step~2a at iteration $i$, which is before the first index $k$ where it can terminate unsuccessfully.
\end{proof}

\section{Analysis of Algorithm~\ref{alg:dp_admm}} 

\label{sec:dynamic_analysis}

% ?? Discuss re-organization in referee response ?? 
This section presents the main properties of Algorithm~\ref{alg:dp_admm}, including the proof of Theorem~\ref{thm:total_compl}.

We first start with two crucial technical results. 

\begin{prop}
\label{prop:penalty_thresh}
The following statements hold about the $\ell^{\rm th}$ iteration of Algorithm~\ref{alg:dp_admm}:
\begin{itemize}
    \item[(a)]$\|\bar{p}^{\ell-1}\| / c_{\ell} \leq 2\kappa_{1}$, where $\kappa_1$ is as in \eqref{eq:pp_indep_defs};
    \item[(b)] its call to Algorithm~\ref{alg:static_dp_admm} terminates in
    ${\cal{T}}_{c_{\ell}}(\rho,\eta\,|\,c_1, 2\kappa_1)$
    iterations and,
    if the $\ell^{\rm th}$ penalty parameter $c_\ell>0$ satisfies 
\begin{equation}
    c_{\ell} \geq \hat{c}(\rho,\eta\,|\,c_1,2\kappa_1),
\end{equation}
then this call
% the $\ell^{{\rm th}}$ call to Algorithm~\ref{alg:static_dp_admm}
terminates successfully, where $\kappa_1$,  ${\cal T}_{c}(\cdot,\cdot\,|\,\cdot,\cdot)$, and $\hat{c}(\cdot, \cdot\,|\,\cdot,\cdot)$ are as in \eqref{eq:pp_indep_defs},  \eqref{eq:T_static_def}, and \eqref{eq:penalty_thresh}, respectively.
\end{itemize}
\end{prop}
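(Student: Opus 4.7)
The plan is to prove the two parts in sequence, where part (a) sets up the key bound $\|\bar{p}^{\ell-1}\|/c_\ell \leq 2\kappa_1$ that then serves as the parameter ${\cal R} = 2\kappa_1$ in applying Proposition~\ref{prop:cycle_props} for part (b).

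For part (a), I would proceed by induction on $\ell$. The base case $\ell = 1$ is immediate since $\bar{p}^0 = 0$ by the initialization in Algorithm~\ref{alg:dp_admm}. For the inductive step, suppose $\|\bar{p}^{\ell-2}\|/c_{\ell-1} \leq 2\kappa_1$. The key observation is that $\bar{p}^{\ell-1}$ is produced by calling Algorithm~\ref{alg:static_dp_admm} with input $(p^0, c) = (\bar{p}^{\ell-2}, c_{\ell-1})$, so Lemma~\ref{lem:tech_Lagr_inexact}(b) applied to the final iterate of that call yields $\|\bar{p}^{\ell-1}\| \leq \|\bar{p}^{\ell-2}\| + \kappa_1 c_{\ell-1}$. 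Combined with the inductive hypothesis and the doubling rule $c_\ell = 2 c_{\ell-1}$, this gives
\[
\|\bar{p}^{\ell-1}\| \leq 2\kappa_1 c_{\ell-1} + \kappa_1 c_{\ell-1} = 3\kappa_1 c_{\ell-1} = \tfrac{3}{2}\kappa_1 c_\ell \leq 2\kappa_1 c_\ell,
\]
which closes the induction. The point is that the doubling of $c_\ell$ leaves enough slack to absorb one more $\kappa_1 c_{\ell-1}$ term at each step without blowing up the ratio.

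For part (b), I would directly apply Proposition~\ref{prop:cycle_props} with the parameter choices ${\cal R} = 2\kappa_1$ and $\underline{c} = c_1$. The hypothesis \eqref{eq:R_cbar_bd} required by Proposition~\ref{prop:cycle_props} is $\|p^0\| \leq c {\cal R}$ and $c \geq \underline{c}$; at iteration $\ell$, the inputs to Algorithm~\ref{alg:static_dp_admm} are $(p^0, c) = (\bar{p}^{\ell-1}, c_\ell)$, and both conditions are immediate: the first is exactly the conclusion of part (a), and the second follows from $c_\ell \geq c_1$ which in turn follows from the doubling rule $c_{\ell+1} = 2c_\ell$ together with $c_1 > 0$. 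The iteration bound ${\cal T}_{c_\ell}(\rho, \eta \,|\, c_1, 2\kappa_1)$ is then Proposition~\ref{prop:cycle_props}(a), and the successful termination claim under $c_\ell \geq \hat{c}(\rho,\eta\,|\,c_1,2\kappa_1)$ is Proposition~\ref{prop:cycle_props}(c).

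There is no real obstacle here; both parts are short. The only subtlety worth highlighting is in part (a) --- one must notice that the constant $2$ in $2\kappa_1$ is specifically tuned to the doubling rule $c_{\ell+1} = 2c_\ell$, since the induction produces an extra additive $\kappa_1 c_{\ell-1} = \frac{1}{2}\kappa_1 c_\ell$ per step, which is precisely what the doubling absorbs. If a slower growth rule for $c_\ell$ were used, the constant ${\cal R}$ would have to be increased accordingly.
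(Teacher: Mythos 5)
Your proposal is correct and follows essentially the same route as the paper: part (a) by induction using Lemma~\ref{lem:tech_Lagr_inexact}(b) together with the doubling rule to absorb the extra $\kappa_1 c_{\ell-1}$ term, and part (b) by invoking Proposition~\ref{prop:cycle_props} with $(c,\underline{c},{\cal R})=(c_\ell,c_1,2\kappa_1)$ after verifying \eqref{eq:R_cbar_bd}. No gaps.
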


\begin{proof}
(a) We proceed by induction. Since $\bar{p}^0=0$, the case of $\ell=1$ is immediate. Suppose the statement holds for some iteration $\ell$ and, hence, that
$\|\bar p^{\ell-1}\| \leq 2 \kappa_1 c_\ell$. Then, it follows from Lemma~\ref{lem:tech_Lagr_inexact}(b) with $(p^0,c)=(\bar{p}^{\ell-1},c_\ell)$ and the relation $c_{\ell+1} = 2c_{\ell}$ that 
\[
\|\bar{p}^{\ell}\| \leq \|\bar{p}^{\ell-1}\| + \kappa_{1} c_{\ell} \leq 2 \kappa_{1} c_{\ell} + \kappa_1 c_{\ell} = 3 \kappa_1 c_{\ell} = \frac{3 \kappa_1}{2}  c_{\ell+1} < 2 \kappa_1 c_{\ell+1}.
\]

(b) This follows from part (a), the fact that $\{c_\ell\}_{\ell\geq 1}$ is an increasing sequence,  and Proposition~\ref{prop:cycle_props} with $(c, \underline{c}, {\cal R})=(c_\ell, c_1, 2\kappa_1)$.
\end{proof}

We are now ready to give the proof of Theorem~\ref{thm:total_compl}.

\begin{proof}[Proof of Theorem~\ref{thm:total_compl}]
Define the scalars
\begin{gather*}
\hat{c} := \hat{c}(\rho,\eta\,|\,c_1, 2\kappa_1), \quad \hat{\ell} := \lceil \log_{2}^{+}(\hat{c} / c_1) \rceil, 
\quad {\cal T}_{c_\ell} := {\cal T}_{c_\ell}(\rho,\eta\,|\,c_1, 2\kappa_1),
\end{gather*}
where $\hat{c}(\cdot,\cdot\,|\,\cdot,\cdot)$ is as in \eqref{eq:penalty_thresh}.
Proposition~\ref{prop:penalty_thresh}(b) and the update rule for $c_\ell$
imply that Algorithm~\ref{alg:dp_admm} performs at most $\hat{\ell}$ iterations, and terminates with a pair that solves Problem~${\cal S}_{\rho,\eta}$.
Moreover, the total number 
of iterations of Algorithm~\ref{alg:static_dp_admm}
(performed by all of Algorithm~\ref{alg:dp_admm}'s calls to it) is
bounded by $\sum_{\ell=1}^{\hat{\ell}}{\cal T}_{c_\ell}$. Now, using Lemma~\ref{lem:penalty_tech_res}(a) with $\underline{c}=c_1$, it follows that
\begin{equation}
\frac{\sum_{\ell=1}^{\hat{\ell}}{\cal T}_{c_\ell}}{T_1} \leq \frac{\sum_{\ell=1}^{\hat{\ell}}c_{\ell}^{2}}{c_{1}^{2}}+\frac{\sum_{\ell=1}^{\hat{\ell}}c_{\ell}}{c_{1}\varepsilon^2} = \sum_{\ell=1}^{\hat{\ell}}2^{2(\ell-1)}+\frac{\sum_{\ell=1}^{\hat{\ell}}2^{(\ell-1)}}{\varepsilon^2} \leq 4^{\hat{\ell}}+\frac{2^{\hat{\ell}}}{\varepsilon^{2}},
\label{eq:T_hat_bd}
\end{equation}
where $(T_1,\varepsilon)$ are as in \eqref{eq:dyn_aliases}.
We now derive suitable bounds for $4^{\hat\ell}$ and $2^{\hat\ell}$. Using the definitions of $\hat{c}$ and $\hat{\ell}$, and the definition of $(E_0,E_1)$ in \eqref{eq:E_defs}, we first have that
\begin{align}
2^{\hat{\ell}} & \leq\max\left\{ 2,2^{(1+\log_{2}\hat{c}/c_{1})}\right\} \leq2\max\left\{ 1,\frac{\hat{c}}{c_{1}}\right\} = 2\max\left\{ 1,\frac{1}{c_{1}^{3}}\left(T_1+\frac{\sqrt{c_1^3 T_1}}{\varepsilon}\right)\right\} 
\nonumber \\
& \leq 2\left(1+\frac{T_1}{c_{1}^{3}}+\frac{1}{\varepsilon}\sqrt{\frac{T_1}{c_1^{3}}}\right) 
= E_0 +\frac{E_1}{\varepsilon} \label{eq:2l_bd}.
\end{align}
Combining the above inequality above with the bound $(a+b)^2 \leq 2a^2 + 2b^2$ for $a,b\in\r$, it is also easy to see that
\begin{align}
4^{\hat{\ell}} & \leq (2^{\hat \ell})^2 \leq  2 E_0^2 + \frac{2E_1^2}{\varepsilon^2}. \label{eq:4l_bd}
\end{align}
The conclusion now follows by applying \eqref{eq:4l_bd} and \eqref{eq:2l_bd} to \eqref{eq:T_hat_bd}.
\end{proof}

\section{Numerical Experiments}

\label{sec:numerical_experiments}
This section examines the performance of the proposed DP.ADMM (Algorithm~\ref{alg:dp_admm}) for finding stationary points of a nonconvex three-block distributed quadratic programming problem. Specifically, given a radius $\gamma > 0$ and a dimension $n\in \n$, it considers the three-block problem
\begin{align*}
\min_{(x_{1},x_{2},x_{3})\in\rn\times\rn\times\rn}\  & -\sum_{i=1}^{2}\left[\frac{\alpha_{i}}{2}\|x_{i}\|^{2}+\left\langle x_{i},\beta_{i}\right\rangle \right]\\
\text{s.t.}\  & \|x\|_{\infty}\leq\gamma,\\
 & x_{1}-x_{3}=0,\\
 & x_{2}-x_{3}=0,
\end{align*}
where $\{\alpha_i\}_{i=1}^2\subseteq [0,1]$, $\{\beta_i\}_{i=1}^2\subseteq [0,1]^n$, and the entries of these quantities are sampled from the uniform distribution on $[0,1]$. It is clear that the above problem is an instance of \eqref{eq:main_prb} if we take $h_i$ to be the indicator of the set $\{x\in\rn:\|x\|_\infty \leq \gamma\}$ for $i=1,\ldots,3$.
At the end of this section, we give some elucidating remarks.

Before presenting the results, we first describe the algorithms tested. 
The first set of algorithms, labeled DP1--DP2, are modifications of Algorithm~\ref{alg:dp_admm}. \
Specifically, both DP1 and DP2 replace the original definition of ${\cal S}_k^{(f)}$ (resp. ${\cal S}_k^{(f)}$) in Step~2b of Algorithm~\ref{alg:static_dp_admm} with $2\sum_{i=1}^k \|v^i\|/[k+2]$ (resp. $2\sum_{i=1}^k \|Ax^i-d\|/[k+2]$) and choose $(\lam,c_1)=(1/2,1)$. 
Moreover, DP1 chooses $(\theta,\chi)=(0,1)$ while DP2 chooses $(\theta,\chi)=(1/2,1/18)$ which satisfies \eqref{eq:chi_theta_cond} at equality. The second set of algorithms, labeled SDD1--SDD3, are instances of the SDD-ADMM of \cite{sun2021dual} for different values of the penalty parameter $\rho$. Specifically, all of these instances uses the parameters $(\omega,\theta,\tau)=(4,2,1)$, following the same choice as in \cite[Section~5.1]{sun2021dual}, and select the following curvature constants: $(M_h, K_h, J_h, L_h)=(4\gamma, 1, 1, 0)$. Moreover, SDD1--SDD3 respectively choose the penalty parameter $\rho$ to be $0.1$, $1.0$, and $10.0$, and termination of the method occurs when the norm of the stationary residual $\xi^k$ and feasibility are both less than a given numerical tolerance.

The results of our experiment are now given in Tables~\ref{tab:gamma}--\ref{tab:n}, which present both iteration counts and runtimes for either varying choices of $\gamma$ (Table~\ref{tab:gamma}) or $n$ (Table~\ref{tab:n}). We now describe a few more details about these experiments and tables. 
First, the starting point for all methods is the zero vector and the numerical tolerances (e.g., $\rho$ and $\eta$ in DP1--DP2) for each method were set to be $10^{-9}$. 
Second, the bolded text in the tables highlight the method that performed the best in terms of iteration count. 
Third, we imposed an iteration limit of 100,000 and marked the runs which did not terminate by this limit with a `-' symbol. Fourth, the experiments were implemented and executed in Matlab R2021b on a Windows 64-bit desktop machine with 12GB of RAM and two Intel(R) Xeon(R) Gold 6240 processors, and the code is readily available online\footnote{See \url{https://github.com/wwkong/nc_opt/tree/master/tests/papers/dp_admm}.}.

\begin{table}[!tbh]
\begin{centering}
{\tiny{}}%
\begin{tabular}{c|ccccc|ccccc}
 & \multicolumn{5}{c|}{{\footnotesize{}Iteration Count}} & \multicolumn{5}{c}{{\footnotesize{}Runtime (ms)}}\tabularnewline
{\footnotesize{}$\gamma$} & {\footnotesize{}DP1} & {\footnotesize{}DP2} & {\footnotesize{}SDD1} & {\footnotesize{}SDD2} & {\footnotesize{}SDD3} & {\footnotesize{}DP1} & {\footnotesize{}DP2} & {\footnotesize{}SDD1} & {\footnotesize{}SDD2} & {\footnotesize{}SDD3}\tabularnewline
\hline 
{\footnotesize{}$10^{0}$} & \textbf{\footnotesize{}21} & {\footnotesize{}29} & {\footnotesize{}363} & {\footnotesize{}135} & {\footnotesize{}528} & {\footnotesize{}1.8} & {\footnotesize{}1.9} & {\footnotesize{}38.2} & {\footnotesize{}13.4} & {\footnotesize{}50.4}\tabularnewline
{\footnotesize{}$10^{1}$} & \textbf{\footnotesize{}76} & {\footnotesize{}83} & {\footnotesize{}427} & {\footnotesize{}223} & {\footnotesize{}976} & {\footnotesize{}4.0} & {\footnotesize{}4.9} & {\footnotesize{}41.3} & {\footnotesize{}22.4} & {\footnotesize{}88.1}\tabularnewline
{\footnotesize{}$10^{2}$} & \textbf{\footnotesize{}151} & {\footnotesize{}156} & {\footnotesize{}497} & {\footnotesize{}309} & {\footnotesize{}1394} & {\footnotesize{}7.9} & {\footnotesize{}7.7} & {\footnotesize{}45.2} & {\footnotesize{}28.3} & {\footnotesize{}121.7}\tabularnewline
{\footnotesize{}$10^{3}$} & \textbf{\footnotesize{}228} & {\footnotesize{}232} & {\footnotesize{}569} & {\footnotesize{}399} & {\footnotesize{}1855} & {\footnotesize{}10.8} & {\footnotesize{}10.8} & {\footnotesize{}51.2} & {\footnotesize{}34.3} & {\footnotesize{}159.3}\tabularnewline
{\footnotesize{}$10^{4}$} & \textbf{\footnotesize{}306} & {\footnotesize{}308} & {\footnotesize{}647} & {\footnotesize{}489} & {\footnotesize{}2316} & {\footnotesize{}15.5} & {\footnotesize{}17.6} & {\footnotesize{}58.9} & {\footnotesize{}42.9} & {\footnotesize{}223.1}\tabularnewline
{\footnotesize{}$10^{5}$} & \textbf{\footnotesize{}385} & {\footnotesize{}385} & {\footnotesize{}-} & {\footnotesize{}581} & {\footnotesize{}2778} & {\footnotesize{}17.9} & {\footnotesize{}18.5} & {\footnotesize{}-} & {\footnotesize{}48.0} & {\footnotesize{}241.5}\tabularnewline
\end{tabular}{\tiny\par}
\par\end{centering}
\caption{Results with $n=10$ and different values of $\gamma$\label{tab:gamma}}
\end{table}

\begin{table}[!tbh]
\begin{centering}
{\tiny{}}%
\begin{tabular}{c|ccccc|ccccc}
 & \multicolumn{5}{c|}{{\footnotesize{}Iteration Count}} & \multicolumn{5}{c}{{\footnotesize{}Runtime (ms)}}\tabularnewline
{\footnotesize{}$n$} & {\footnotesize{}DP1} & {\footnotesize{}DP2} & {\footnotesize{}SDD1} & {\footnotesize{}SDD2} & {\footnotesize{}SDD3} & {\footnotesize{}DP1} & {\footnotesize{}DP2} & {\footnotesize{}SDD1} & {\footnotesize{}SDD2} & {\footnotesize{}SDD3}\tabularnewline
\hline 
{\footnotesize{}10} & \textbf{\footnotesize{}151} & {\footnotesize{}156} & {\footnotesize{}497} & {\footnotesize{}309} & {\footnotesize{}1394} & {\footnotesize{}7.8} & {\footnotesize{}7.5} & {\footnotesize{}65.8} & {\footnotesize{}29.0} & {\footnotesize{}121.8}\tabularnewline
{\footnotesize{}40} & \textbf{\footnotesize{}55} & {\footnotesize{}60} & {\footnotesize{}-} & {\footnotesize{}-} & {\footnotesize{}3117} & {\footnotesize{}3.7} & {\footnotesize{}3.5} & {\footnotesize{}-} & {\footnotesize{}-} & {\footnotesize{}319.0}\tabularnewline
{\footnotesize{}160} & \textbf{\footnotesize{}139} & {\footnotesize{}144} & {\footnotesize{}-} & {\footnotesize{}388} & {\footnotesize{}1836} & {\footnotesize{}8.5} & {\footnotesize{}8.2} & {\footnotesize{}-} & {\footnotesize{}42.0} & {\footnotesize{}202.7}\tabularnewline
{\footnotesize{}640} & \textbf{\footnotesize{}53} & {\footnotesize{}54} & {\footnotesize{}-} & {\footnotesize{}349} & {\footnotesize{}16243} & {\footnotesize{}4.0} & {\footnotesize{}3.9} & {\footnotesize{}-} & {\footnotesize{}40.4} & {\footnotesize{}1901.5}\tabularnewline
{\footnotesize{}2560} & \textbf{\footnotesize{}58} & {\footnotesize{}59} & {\footnotesize{}-} & {\footnotesize{}458} & {\footnotesize{}8464} & {\footnotesize{}7.1} & {\footnotesize{}6.7} & {\footnotesize{}-} & {\footnotesize{}77.4} & {\footnotesize{}1553.7}\tabularnewline
{\footnotesize{}10240} & \textbf{\footnotesize{}108} & {\footnotesize{}110} & {\footnotesize{}-} & {\footnotesize{}1058} & {\footnotesize{}4334} & {\footnotesize{}44.4} & {\footnotesize{}40.3} & {\footnotesize{}-} & {\footnotesize{}623.5} & {\footnotesize{}2790.6}\tabularnewline
\end{tabular}{\tiny\par}
\par\end{centering}
\caption{Results with $\gamma=100$ and different values of $n$\label{tab:n}}
\end{table}

From the results in Tables~\ref{tab:gamma}--\ref{tab:n}, we see that DP1 performed the best in terms of iteration count and DP2 had iteration counts that were close to DP1. 
On the other hand, SDD2 outperformed its other SDD-ADMM variant on all problems except one. 
Finally, notice that the DP.ADMM variants scaled better against the dimension $n$ compared to the SDD-ADMM variants.

To close this section, we give some elucidating remarks. 
First, we excluded the algorithm in \cite{jiang2019structured} due to its poor iteration complexity bound and the fact that it is an algorithm applied to a reformulation of \eqref{eq:main_prb} rather than to \eqref{eq:main_prb} directly. 
Second, we had to choose different values of the penalty parameter $\rho$ for the SDD-ADMM variants because the analysis in \cite{sun2021dual} did not present a practical way of adaptively updating $\rho$ (note that the ``adaptive'' method in \cite[Algorithm~3.2]{sun2021dual} is not practical because it requires an estimate of $\sup_{x\in {\cal H}} \phi(x) - \inf_{x\in{\cal H}} \phi$ for \eqref{eq:main_prb}).

\section{Concluding Remarks}

\label{sec:concluding_remarks}

The analysis of this paper also applies to instances of \eqref{eq:main_prb} where
$f$ is not necessarily differentiable on
${\cal H}$ as in our condition (A5), but instead satisfies a more relaxed version of (A5), namely:
for every $x \in {\cal H}$, the function
$f(x_{<t},\cdot,x_{>t})$
% satisfies \eqref{eq:weak_cvx} and
has a Fr\'echet subgradient at $x_t$,
denoted by
$\nabla_{x_{t}}f(x_{\leq t},{x}_{>t})$, and
\eqref{eq:lipschitz_x}
is satisfied for every $t=1,\ldots,B-1$.
Hence, our analysis immediately applies to
the case where $f(z)$ is of the form $\sum_{t=1}^B f_t(z_t)$
in which, for every $t=1,\ldots,B$, the function
$f_t(\cdot) +m_t\|\cdot\|^2/2+\delta_{{\cal H}_t}(\cdot)$ is convex and has a subgradient
everywhere in ${\cal H}_t$.

We now discuss some possible extensions of our analysis in this paper. 
First, our analysis was done
under the assumption that ${\cal H}$ is
bounded (see (A3)), but
it is straightforward to see that it is still
valid under the weaker assumption
that $\sup_{k \geq 1} \|x^k - z_\dagger \| \le D_\dagger$
for some $D_\dagger >0$
where $z_\dagger$ is as in (A6).
It would be interesting to extend the analysis in this paper to the case where ${\cal H}$ is unbounded, possibly by assuming conditions on
the sublevel sets of $\phi$
which guarantee that the aforementioned bound holds.
Second, the convergence of Algorithm~\ref{alg:dp_admm} is established under the assumption that exact solutions to the subproblems in Step~1 of Algorithm~\ref{alg:static_dp_admm} are easy to obtain. 
We believe that convergence can also be established when only inexact solutions, e.g.,
\begin{equation}
x_{t}^{k} \approx \argmin_{u_t\in\r^{n_{t}}}\left\{ \lam{\cal L}_{c}^{\theta}(x_{<t}^{k},u_t,x_{>t}^{k-1};p^{k-1})+\frac{1}{2}\|u_t-x_{t}^{k-1}\|^{2}\right\} \label{eq:inexact_primal}
\end{equation}
are available. For example, one could consider applying an accelerated composite gradient (ACG) method to the problem associated with \eqref{eq:inexact_primal} so that $x_{t}^{k}$ satisfies
\[
\exists r_{t}^{k} \quad \text{ s.t. } \quad
\begin{cases}
r_{k}^{t}\in\partial\left(\lambda{\cal L}_{c}^{\theta}(x_{<t}^{k},\cdot,x_{>t}^{k-1};p^{k-1})+\frac{1}{2}\|\cdot-x_{t}^{k-1}\|^{2}\right)(x_{t}^{k}),\\
\|r_{t}^{k}\|^2\leq\sigma^{2}\|x_{t}^{k-1}-x^{k}\|^{2},
\end{cases}
\]
for some $\sigma\in(0,1)$.
% Using the techniques developed in \cite{kong2021aidal}, we conjecture that the ACG iteration complexity of the resulting inexact DP.ADMM is ${\cal O}( B\eta^{-1.5}\rho^{-2} \log (\eta\sigma)^{-1})$
% compared to the ${\cal O}(\eta^{-1}\rho^{-2})$ iteration complexity of Algorithm~\ref{alg:dp_admm}.

\appendix

\section{Proof of Lemma~\ref{lem:key_iter_relations} and Lemma~\ref{lem:Lagr_bds}(a)--(b)}
\label{app:tech_ineq}

Before giving the proofs, we present some auxiliary results. To avoid repetition, we assume the reader is already familiar with \eqref{eq:global_admm_consts}--\eqref{not:Delta_var}.

The proof of the first result can be found in \cite[Lemma B.2]{kong2021aidal}. 
\begin{lem}
\label{lem:tech_ADeltaZ_DeltaP}For any $(\zeta,\theta)\in[0,1]^{2}$
satisfying $\zeta\leq\theta^{2}$ and $(a,b)\in\rn\times\rn$, we have
that 
\begin{equation}
\|a-(1-\theta)b\|^{2}-\zeta\|a\|^{2}\geq\left[\frac{(1-\zeta)-(1-\theta)^{2}}{2}\right]\left(\|a\|^{2}-\|b\|^{2}\right).\label{eq:inexact_tech_DeltaP1}
\end{equation}
\end{lem}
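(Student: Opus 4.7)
My plan is to reduce the inequality to a quadratic-form calculation and then apply a single elementary bound on the cross term. To simplify bookkeeping, I would first set $\alpha := 1-\theta$, so that $\alpha \in [0,1]$ and the hypothesis $\zeta \leq \theta^{2}$ becomes $\zeta \leq (1-\alpha)^{2}$, which one can rewrite as $(1-\zeta) + \alpha^{2} \geq 2\alpha$. I flag this scalar inequality now because it is the exact form in which the hypothesis will enter at the end of the argument.

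Next, I would expand the left-hand side using the polarization identity:
\[
\|a - \alpha b\|^{2} - \zeta\|a\|^{2} = (1-\zeta)\|a\|^{2} - 2\alpha\langle a,b\rangle + \alpha^{2}\|b\|^{2}.
\]
Moving the right-hand side of the target inequality over and simplifying the coefficients of $\|a\|^{2}$ and $\|b\|^{2}$, the inequality to be proved becomes
\[
\tfrac{(1-\zeta)+\alpha^{2}}{2}\bigl(\|a\|^{2}+\|b\|^{2}\bigr) \geq 2\alpha\langle a,b\rangle.
\]
So the problem is now purely algebraic in the three scalars $\|a\|^{2}$, $\|b\|^{2}$, and $\langle a,b\rangle$.

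To finish, I would apply the basic bound $2\langle a,b\rangle \leq \|a\|^{2} + \|b\|^{2}$ (a direct consequence of $\|a-b\|^{2} \geq 0$), which gives $2\alpha\langle a,b\rangle \leq \alpha(\|a\|^{2}+\|b\|^{2})$. Hence it suffices to verify that
\[
\tfrac{(1-\zeta)+\alpha^{2}}{2} \geq \alpha,
\]
and this is exactly the reformulated hypothesis recorded in the first paragraph. Combining these two displays yields the required inequality.

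There is no real obstacle here; the only thing one has to be careful about is that the hypothesis $\zeta \leq \theta^{2}$ is used in the sharp form $\alpha^{2} - 2\alpha + (1-\zeta) \geq 0$, which is precisely what makes the cross-term bound from Cauchy--Schwarz (or, equivalently, the AM-GM inequality applied to $\|a\|\|b\|$) tight enough to close the estimate. The two natural sanity checks, namely $\alpha = 0$ (giving $\|a\|^{2} - \zeta\|a\|^{2} \geq \tfrac{1-\zeta}{2}(\|a\|^{2}-\|b\|^{2})$, clearly true) and $\zeta = \theta^{2}$ at which the scalar inequality becomes an equality, both pass, suggesting the approach is robust.
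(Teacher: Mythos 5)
Your proof is correct. The reduction is exact: after setting $\alpha=1-\theta$ and expanding, the coefficients of $\|a\|^{2}$ and $\|b\|^{2}$ in the rearranged inequality both equal $\tfrac{(1-\zeta)+\alpha^{2}}{2}$, so the claim does collapse to $\tfrac{(1-\zeta)+\alpha^{2}}{2}\left(\|a\|^{2}+\|b\|^{2}\right)\geq 2\alpha\langle a,b\rangle$, which follows from $2\langle a,b\rangle\leq\|a\|^{2}+\|b\|^{2}$ (valid for either sign of the inner product), the fact that $\alpha\geq 0$, and the hypothesis $\zeta\leq\theta^{2}$ rewritten as $(1-\zeta)+\alpha^{2}\geq 2\alpha$. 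Note that the paper does not prove this lemma internally; it cites Lemma~B.2 of \cite{kong2021aidal}, so your elementary, self-contained argument is a perfectly good substitute and uses the standard route one would expect (polarization plus the Young/AM--GM bound on the cross term, with the hypothesis entering exactly once as a scalar inequality).
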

The next result establishes some general bounds given by the updates
in \eqref{eq:x_update}. 
\begin{lem}
\label{lem:gen_prox_ineq}For every $i\geq 1$, index $t=1,\ldots,B$,
% $1\leq t\leq B$,
and $u_t \in {\cal H}_{t}$, it holds that 
\begin{align*}
 & \lam\left[{\cal L}_{c}^{\theta}(x_{<t}^{i},u_t,x_{>t}^{i-1};p^{i-1})-{\cal L}_{c}^{\theta}(x_{< t}^i, x_{t}^{i},x_{>t}^{i-1};p^{i-1})\right]+\frac{1}{2}\|u_t-x_{t}^{i-1}\|^{2}\\
 & \geq\frac{1}{2}\|\Delta x_{t}^{i}\|^{2}+\left(\frac{1-\lam m_{t}}{2}\right)\|u_t-x_{t}^{i}\|^{2}+\frac{\lam c}{2}\|A_{t}(u_t-x_{t}^{i})\|^{2}.
\end{align*}
\end{lem}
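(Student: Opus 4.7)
My plan is to establish this bound by recognizing that $x_t^i$ is the exact minimizer of the objective in \eqref{eq:x_update}, namely
\[
\psi(u_t) := \lam{\cal L}_{c}^{\theta}(x_{<t}^{i},u_t,x_{>t}^{i-1};p^{i-1})+\tfrac{1}{2}\|u_t-x_{t}^{i-1}\|^{2},
\]
and then producing a second-order-type lower bound for $\psi(u_t)-\psi(x_t^i)$. Since $\psi(x_t^i) = \lam{\cal L}_c^\theta(x_{<t}^i, x_t^i, x_{>t}^{i-1}; p^{i-1}) + \tfrac{1}{2}\|\Delta x_t^i\|^2$, establishing
\[
\psi(u_t)\ge \psi(x_t^i)+\tfrac{1-\lam m_t}{2}\|u_t-x_t^i\|^2+\tfrac{\lam c}{2}\|A_t(u_t-x_t^i)\|^2
\]
is equivalent to the claimed inequality.

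The first main step is to record the first-order optimality condition. Since $x_t^i$ minimizes $\psi$ over $\r^{n_t}$, there exists $\xi \in \pt h_t(x_t^i)$ such that
\[
0 = \lam \nabla_{x_t} f(x_{<t}^i, x_t^i, x_{>t}^{i-1}) + \lam\xi + \lam(1-\theta) A_t^* p^{i-1} + \lam c A_t^*\bigl(A_t x_t^i + r\bigr) + (x_t^i - x_t^{i-1}),
\]
where $r := \sum_{s<t} A_s x_s^i + \sum_{s>t} A_s x_s^{i-1} - d$. Next I would decompose $\psi(u_t)-\psi(x_t^i)$ into its four $u_t$-dependent pieces and lower bound each at $x_t^i$: (i) for $\lam h_t$, use convexity of $h_t$ and the subgradient $\xi$; (ii) for $\lam f(x_{<t}^i, \cdot, x_{>t}^{i-1})$, use the standard weak-convexity inequality that follows from assumption (A4), yielding a remainder of $-\tfrac{\lam m_t}{2}\|u_t-x_t^i\|^2$ (here (A5) ensures the relevant gradient exists); (iii) for the quadratic $\tfrac{\lam c}{2}\|A_t u_t + r\|^2$, apply an \emph{exact} second-order Taylor expansion around $x_t^i$, producing the remainder $\tfrac{\lam c}{2}\|A_t(u_t-x_t^i)\|^2$; and (iv) for the prox term $\tfrac{1}{2}\|u_t-x_t^{i-1}\|^2$, apply an exact expansion around $x_t^i$, producing the remainder $\tfrac{1}{2}\|u_t-x_t^i\|^2$.

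Summing the four bounds, the linear-in-$(u_t-x_t^i)$ terms collect into $\langle\,\cdot\,, u_t-x_t^i\rangle$ where the bracketed expression is precisely the left-hand side of the optimality identity above, hence is zero. The remaining quadratic remainders are $-\tfrac{\lam m_t}{2}\|u_t-x_t^i\|^2$, $\tfrac{\lam c}{2}\|A_t(u_t-x_t^i)\|^2$, and $\tfrac{1}{2}\|u_t-x_t^i\|^2$, which combine to give the asserted lower bound for $\psi(u_t)-\psi(x_t^i)$. Rearranging, using $\psi(x_t^i) = \lam{\cal L}_c^\theta(\ldots,x_t^i,\ldots;p^{i-1}) + \tfrac{1}{2}\|\Delta x_t^i\|^2$, yields the lemma. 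No step should be technically difficult; the only care needed is keeping track of the four sources of linearization and confirming that their coefficients in $(u_t - x_t^i)$ cancel exactly via the optimality condition, which is essentially a bookkeeping check.
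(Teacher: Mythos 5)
Your proof is correct and is essentially the paper's argument unpacked: the paper obtains the same conclusion in one step by observing that $\lam{\cal L}_{c}^{\theta}(x_{<t}^{i},\cdot,x_{>t}^{i-1};p^{i-1})+\frac{1}{2}\|\cdot-x_{t}^{i-1}\|^{2}$ is $1$-strongly convex with respect to the weighted norm $\|\cdot\|_{\alpha}^{2}=\langle\cdot,((1-\lam m_{t})I+\lam cA_{t}^{*}A_{t})(\cdot)\rangle$ (which is exactly the sum of your four termwise remainders) and then applying the standard minimizer inequality, so the optimality condition and cancellation of linear terms that you track explicitly are absorbed into that one invocation.
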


\begin{proof}
Let $i\geq 1$, $t=1,\ldots, B$, and $u_t\in {\cal H}_{t}$ be fixed,
and define $\mu:=1-\lam m_{t}$ and $\|\cdot\|_{\alpha}^{2}:=\left\langle \cdot,(\mu I+\lam cA_{t}^{*}A_{t})(\cdot)\right\rangle $.
Since the prox stepsize $\lam$ is chosen 
 in $(0,1/(2m)]$ and $m \ge m_t$ in view of \eqref{eq:pp_indep_defs},
 it follows that $\mu \ge 1/2$.
Using the optimality of $x_{t}^{i}$, assumption (A4), and the fact that $\lam{\cal L}_{c}^{\theta}(x_{<t}^{i},\cdot,x_{>t}^{i-1};p^{i-1})+\|\cdot-x_{t}^{i-1}\|^{2}/2$
is $1$-strongly convex with respect to $\|\cdot\|_{\alpha}^{2}$,
it follows that 
% \[
% 0\in\pt\left[\lam{\cal L}_{c}^{\theta}(x_{<t}^{i},\cdot,x_{>t}^{i-1};p^{i-1})+\frac{1}{2}\|\cdot-x_{t}^{i-1}\|^{2}-\frac{1}{2}\|\cdot-x_{t}^{i}\|_{\alpha}^{2}\right](x_{t}^{i}),
% \]
% which then implies that
\begin{align*}
 & \lam{\cal L}_{c}^{\theta}(x_{< t}^i,x_{t}^{i},x_{>t}^{i-1};p^{i-1})+\frac{1}{2}\|\Delta x_{t}^{i}\|^{2}\\
 & \leq\lam{\cal L}_{c}^{\theta}(x_{<t}^{i},u_t,x_{>t}^{i-1};p^{i-1})+\frac{1}{2}\|u_t-x_{t}^{i-1}\|^{2}-\frac{1}{2}\|u_t-x_{t}^{i}\|_{\alpha}^{2}\\
 & =\lam{\cal L}_{c}^{\theta}(x_{<t}^{i},u_t,x_{>t}^{i-1};p^{i-1})+\frac{1}{2}\|u_t-x_{t}^{i-1}\|^{2}-\frac{\mu}{2}\|u_t-x_{t}^{i}\|^{2}-\frac{\lam c}{2}\|A_{t}(u_t-x_{t}^{i})\|^{2}.
\end{align*}

\end{proof}

We are now ready to give the proof of Lemma~\ref{lem:key_iter_relations}.

\begin{proof}[Proof of Lemma~\ref{lem:key_iter_relations}]
(a) Using the definition of ${\cal L}_{c}^{\theta}(\cdot;\cdot)$ in \eqref{eq:dal}
and the relation in Lemma~\ref{lem:refine_props}(a), we conclude that
\begin{align}
{\cal L}_{c}^{\theta}(x^{i};p^{i})-{\cal L}_{c}^{\theta}(x^{i};p^{i-1}) & =(1-\theta)\left\langle \Delta p^{i},f^{i}\right\rangle =\left(\frac{1-\theta}{\chi c}\right)\|\Delta p^{i}\|^{2}+\frac{a_{\theta}}{\chi c}\left\langle \Delta p^{i},p^{i-1}\right\rangle \nonumber \\
 & =\left(\frac{1-\theta}{\chi c}\right)\|\Delta p^{i}\|^{2}+\frac{a_{\theta}}{\chi c}\left(\left\langle p^{i},p^{i-1}\right\rangle -\|p^{i-1}\|^{2}\right)  \nonumber\\
 & =\left(\frac{1-\theta}{\chi c}\right)\|\Delta p^{i}\|^{2}+\frac{a_{\theta}}{\chi c}\left(\frac{1}{2}\|p^{i}\|^{2}-\frac{1}{2}\|\Delta p^{i}\|^{2}-\frac{1}{2}\|p^{i-1}\|^{2}\right)  \nonumber\\
 & =\frac{b_{\theta}}{2\chi c}\|\Delta p^{i}\|^{2}+\frac{a_{\theta}}{2\chi c}\left(\|p^{i}\|^{2}-\|p^{i-1}\|^{2}\right). \label{eq:Lagr_p_incr}
\end{align}

(b) Using the definition of $m$ in \eqref{eq:pp_indep_defs} and summing the inequality of Lemma~\ref{lem:gen_prox_ineq} with $u_t=x_{t}^{i-1}$ from $t=1$ to $B$, we have that
\begin{align*}
\left(1-\frac{\lam m}{2}\right) \|\Delta x^{i}\|^{2}+\frac{\lam c}{2}\sum_{t=1}^{ B}\|A_{t}\Delta x_{t}^{i}\|^{2} & \leq\sum_{i=1}^{t}\left(1-\frac{\lam m_{t}}{2}\right)\|\Delta x_{t}^{i}\|^{2}+\frac{\lam c}{2}\sum_{t=1}^{ B}\|A_{t}\Delta x_{t}^{i}\|^{2}\\
 & \leq\lam\left[{\cal L}_{c}^{\theta}(x^{i-1};p^{i-1})-{\cal L}_{c}^{\theta}(x^{i};p^{i-1})\right].
\end{align*}
The conclusion now follows from dividing the above inequality by $\lambda$ and using the fact that $\lam \leq 1/m$.

(c) Note that the definition of $b_\theta$ in \eqref{eq:global_admm_consts} and \eqref{eq:chi_theta_cond} imply 
\[
\zeta := 2B\chi b_\theta \leq \theta^2.
\]
Hence, using the definition of $\gamma_\theta$ in \eqref{eq:global_admm_consts}, and Lemma~\ref{lem:tech_ADeltaZ_DeltaP}
with $(a,b)=(\Delta p^{i},\Delta p^{i-1})$
it follows that
\begin{equation}
\|\Delta p^{i}-(1-\theta)\Delta p^{i-1}\|^{2}\geq 2B\chi b_{\theta}\|\Delta p^{i}\|^{2}+\chi\gamma_{\theta}\left(\|\Delta p^{i}\|^{2}-\|\Delta p^{i-1}\|^{2}\right).\label{eq:DDeltap_bd}
\end{equation}
Using \eqref{eq:DDeltap_bd} at $i$ and $i-1$, Lemma~\ref{lem:refine_props}(a), and
the relation $\|a\|_1^2 \leq n\|a\|_2^2$ for $a\in\rn$, we have that 
\begin{align*}
\frac{c}{4}\sum_{t=1}^{ B}\|A_{t}\Delta x_{t}^{i}\|^{2} & \geq\frac{c}{4 B}\|A\Delta x^{i}\|^{2}=\frac{\|\Delta p^{i}-(1-\theta)\Delta p^{i-1}\|^{2}}{4 B\chi^{2}c}\\
 & \geq\frac{1}{4 B\chi c}\left[2 B b_{\theta}\|\Delta p^{i}\|^{2}+\gamma_{\theta}\left(\|\Delta p^{i}\|^{2}-\|\Delta p^{i-1}\|^{2}\right)\right]\\
 & =\frac{b_{\theta}}{2\chi c}\|\Delta p^{i}\|^{2}+\frac{\gamma_{\theta}}{4 B\chi c}\left(\|\Delta p^{i}\|^{2}-\|\Delta p^{i-1}\|^{2}\right). 
\end{align*}
\end{proof}
Next, we give the proof of Lemma~\ref{lem:Lagr_bds}(a)--(b).
\begin{proof}[Proof of Lemma~\ref{lem:Lagr_bds}(a)--(b)]
(a) 
Using Lemma~\ref{lem:key_iter_relations}(a), the definition of ${\cal L}_c^\theta(\cdot;\cdot)$ in \eqref{eq:dal}, the fact that $\theta\in(0,1)$, and the relations 
$2\left<a,b\right> \leq \|a\|^2 + \|b\|^2$ and $\|a+b\|^2 \leq 2\|a\|^2 + 2\|b\|^2$ for $a,b\in\rn$, it follows that
\begin{align*}
{\cal L}_{c}^{\theta}(x^{j};p^{j}) & =\phi(x^{j})+(1-\theta)\left\langle p^{i},f^{i}\right\rangle +\frac{c}{2}\|f^{i}\|^{2}\\
 & \overset{\text{L.\ref{lem:key_iter_relations}(a)}}{=} \frac{(1-\theta)}{\chi c}\left\langle p^{i},p^{i}-(1-\theta)p^{i-1}\right\rangle +\frac{1}{2c\chi^{2}}\|p^{i}-(1-\theta)p^{i-1}\|^{2}\\
 & \leq\frac{(1-\theta)}{2\chi c}\|p^{i}\|^{2}+\frac{(1-\theta)}{2\chi c}\|p^{i}-(1-\theta)p^{i-1}\|^{2}+\frac{1}{2\chi^{2}c}\|p^{i}-(1-\theta)p^{i-1}\|^{2}\\
 & \leq\frac{1}{2\chi c}\|p^{i}\|^{2}+\frac{1}{\chi^{2}c}\|p^{i}-(1-\theta)p^{i-1}\|^{2}\\
 & \leq\frac{1}{2\chi c}\|p^{i}\|^{2}+\frac{2}{\chi^{2}c}\|p^{i}\|^{2}+\frac{2}{\chi^{2}c}\|p^{i-1}\|^{2}\leq\frac{3(\|p^{i}\|^{2}+\|p^{i-1}\|^{2})}{\chi^{2}c}.
\end{align*}
% Using Lemma~\ref{lem:refine_props}, we first have that
% \begin{align}
%  2(1-\theta)\left\langle p^{i-1},\chi cf^{i}\right\rangle +\|\chi cf^{i}\|^{2} & =\|(1-\theta)p^{i-1}+\chi cf^{i}\|^{2}-(1-\theta)^2\|p^{i-1}\|^{2}\nonumber\\
%  & =\|p^{i}\|^{2}-(1-\theta)^2\|p^{i-1}\|^{2}\leq\|p^{i}\|^{2}.\label{eq:tech_aug1}
% \end{align}
% Now, using \eqref{eq:tech_aug1}, Lemma~\ref{lem:key_iter_relations}(a), the relation $\|\Delta p^i\|^2 \leq 2\|p^i\|^2 + 2\|p^{i-1}\|^2$, and the inclusions
% $a_{\theta} \in (0,1)$, $b_\theta \in (0,2)$, $\chi\in(0,1)$, and $\theta\in(0,1)$, we
% conclude that 
% \begin{align*}
% {\cal L}_{c}^{\theta}(x^{i};p^{i}) 
% & \overset{\text{L.\ref{lem:key_iter_relations}(a)}}{=}
% {\cal L}_{c}^{\theta}(x^{i};p^{i-1})+\frac{b_{\theta}\|\Delta p^{i}\|^{2} + a_{\theta}\left[\|p^{i}\|^{2}-\|p^{i-1}\|^{2}\right]}{2\chi c}\\
% & \leq
% {\cal L}_{c}^{\theta}(x^{i};p^{i-1})+\frac{b_{\theta}\|\Delta p^{i}\|^{2} + a_{\theta}\|p^{i}\|^{2}}{2\chi c} \\
% & \leq 
% {\cal L}_{c}^{\theta}(x^{i};p^{i-1})+\frac{2[\|\Delta p^{i}\|^{2} + \|p^{i}\|^{2}]}{\chi c}\\
% & =
% \phi(x^{i})+(1-\theta)\left\langle p^{i-1},f^{i}\right\rangle +\frac{c}{2}\|f^{i}\|^{2}+\frac{2[\|\Delta p^{i}\|^{2} + \|p^{i}\|^{2}]}{\chi c}\\
% & \leq
% \phi(x^{i})+\frac{2(1-\theta)\left\langle p^{i-1},\chi cf^{i}\right\rangle + \|\chi cf^{i}\|^{2}+4\|p^{i-1}\|^{2}+4\|p^{i}\|^{2}}{2\chi^{2}c}\\
% & \overset{\eqref{eq:tech_aug1}}{\leq}
% \phi(x^{i})+\frac{4\|p^{i-1}\|^{2}+5\|p^{i}\|^{2}}{2\chi^{2}c}
% \leq
% \phi(x^{i})+\frac{3\left(\|p^{i-1}\|^{2}+\|p^{i}\|^{2}\right)}{\chi^{2}c}.
% \end{align*}

(b) It holds that 
\begin{align*}
{\cal L}_{c}^{\theta}(x^{k};p^{k}) &=\phi(x^{k})+(1-\theta)\left\langle p^{k},f^k\right\rangle +\frac{c}{2}\|f^k\|^{2}\\
 & =\phi(x^{k})+\frac{1}{2}\left\Vert\frac{(1-\theta)p^{k}}{\sqrt{c}}+\sqrt{c} f^k\right\Vert ^{2}-\frac{(1-\theta)^{2}\|p^{k}\|^{2}}{2c} \\
 & \geq \phi(x^{k})-\frac{(1-\theta)^2\|p^{k}\|^{2}}{2c} \geq \phi(x^{k})-\frac{\|p^{k}\|^{2}}{2c}.
\end{align*}
\end{proof}

\bibliographystyle{siamplain}
\bibliography{relaxed_admm_ref}

\end{document}